\newfont{\Bbb}{msbm10 scaled\magstephalf}
 \newtheorem{thm}{Theorem}[section]
 \newtheorem{cor}[thm]{Corollary}
 \newtheorem{lem}[thm]{Lemma}
 \newtheorem{prop}[thm]{Proposition}
 \theoremstyle{definition}
 \newtheorem{defn}[thm]{Definition}
\theoremstyle{remark}
 \newtheorem{rem}[thm]{Remark}
 \newtheorem{exm}[thm]{Example}
 \numberwithin{equation}{section}
\newcommand{\pf}{\begin{proof}}
\newcommand{\zb}{\end{proof}}
\newcommand{\ma}{\mathcal}
\def\dim{\mathop{\rm dim}\nolimits}
\def\Ker{\mathop{\rm ker}\nolimits}
\begin{document}

\title[Composition operators between Toeplitz kernels]{Composition operators between Toeplitz kernels}
\author[Y. Liang]{Yuxia Liang}
\address{Yuxia Liang \newline School of Mathematical Sciences,
Tianjin Normal University, Tianjin 300387, P.R. China.} \email{liangyx1986@126.com}
\author[J. R. Partington]{Jonathan R. Partington}
\address{Jonathan R. Partington \newline School of Mathematics,
  University of Leeds, Leeds LS2 9JT, United Kingdom.}
 \email{J.R.Partington@leeds.ac.uk}
\subjclass[2010]{47B33,  47A15,	30H10.}
\keywords{nearly invariant subspace, model space, Toeplitz kernel, composition operator, inner function}
\begin{abstract}
Recently, it was shown
that the image of a Toeplitz kernel of dimension greater than $1$ under composition by an inner function is nearly $S^*$-invariant if and only if the inner function is an automorphism. Building on this, we determine the minimal Toeplitz kernel containing the image of a Toeplitz kernel under a composition operator  with a general inner symbol, and extend this to weighted composition operators. Specifically, the  corresponding cases for minimal model spaces are also given, thereby extending known work on the action of composition operators on model spaces. Finally, we use the equivalences between Toeplitz kernels  to derive the explicit maximal vectors for several Toeplitz kernels, with symbols expressed in terms of composition operators and inner functions.
\end{abstract}

\maketitle

\section{Preliminaries}
Let $H^2:=H^2(\mathbb{D})$ denote the standard Hardy space of the unit disc $\mathbb{D},$ which embeds isometrically into $L^2(\mathbb{T})$,  where $\mathbb{T}$ is the unit circle endowed with the normalized Lebesgue measure $m$.
It holds that
 $L^2(\mathbb{T})=H^2 \oplus \overline{H_0^2 },
$ with $H^2 =\bigvee\{z^n:\;n\geq 0\}$ and $\overline{H_0^2}= \bigvee\{z^n:\;n< 0\}= \overline{z H^2}$. Let $ H^\infty:=H^\infty(\mathbb{D})$ be the Banach algebra of bounded analytic functions on $\mathbb{D},$ of which the set of invertible elements is denoted by $\mathcal{G}H^\infty,$ similarly for $L^\infty:=L^\infty(\mathbb{T})$ and $\mathcal{G}L^\infty$. Additionally,  the Smirnov class (see, e.g. \cite[Lemma 4.4.2]{Nik}) is defined as  $$\mathcal{N}_+=\{f\in H(\mathbb{D}):\; \mbox{there exist}\; f_1,\;f_2\in H^\infty \;\mbox{such that}\;f=f_1/f_2\;\mbox{and}\;f_2\;\mbox{is outer}\}.$$
Here an outer function $f\in H^2$ is one that can be written in the form
$$f(z)=\alpha\exp\left(\frac{1}{2\pi}\int_0^{2\pi} \frac{e^{iw}+z}{e^{iw}-z}k(e^{iw})dw\right), \quad z\in \mathbb{D},$$
where $k$ is a real-valued integrable function and $|\alpha|=1.$

Given $g\in L^\infty,$  the Toeplitz operator $T_g: \;H^2\rightarrow H^2$ is   $$(T_g f)(\lambda)=P_{H^2}(g\cdot f)(\lambda)=\int_{\mathbb{T}}\frac{g(\zeta)f(\zeta)}{1-\overline{\zeta}
\lambda}dm(\zeta),$$ where $P_{H^2}$ is the orthogonal projection from $L^2(\mathbb{T})$ onto $H^2$.  In particular, the Toeplitz operator $T_z$ is the unilateral shift  $S$ on $H^2$   defined by $[Sf](z)=zf(z).$ Its adjoint operator on $H^2$ is the backward shift  $[S^*f](z)= (f(z)-f(0))/z.$
The well-known Beurling theorem characterises the nontrivial subspaces of  $H^2$ that are invariant under $S$ as being of the form $\theta H^2,$ where $\theta$ is an inner function, i.e.,  $\theta\in H^\infty$ such that its  modulus is $1$ almost everywhere on $\mathbb{T}.$ Meanwhile, the model spaces $K_\theta:=H^2\ominus \theta H^2=\ker T_{\overline\theta}$ are $S^*$-invariant.

With the development of  $S$- or $S^*$-invariant subspaces in spaces on various domains, the concept of near invariance emerges.
Recall that a closed subspace $\ma{M}\subseteq H^2$ is said to be nearly $S^*$-invariant if whenever $f\in \ma{M}$ and $f(0)=0,$ then $S^*f\in\ma{M}.$ That is, $\ma{M}\subseteq H^2$ is  nearly $S^*$-invariant if the zeros of functions in $\ma{M}$ can be divided out without leaving the space.
The study of nearly $S^*$-invariant subspaces in $H^2$ was first explored by Hayashi  in \cite{Ha}, Hitt in \cite{hitt}, and then Sarason \cite{Sa1,Sa2} in relation to the kernels of Toeplitz operators. Subsequently, C\^{a}mara and Partington conducted  a systematic investigation  of near invariance and Toeplitz kernels (see, e.g. \cite{CaP1, CaP2}). After that we extended the near invariance  to left invertible operators on Hilbert space and the shift semigroup on $L^2(0,\infty)$ (see, e.g. \cite{LP2,LP3}).

Hitt proved the following well-known characterization of nearly $S^*$-invariant subspaces in $H^2$, with a vectorial generalization formulated in \cite[Theorem 4.4]{CCP10}.

\begin{thm}\cite[Proposition 3]{hitt} \label{thm Hitt}The nearly $S^*$-invariant subspaces of $H^2$ have the form $\ma{M}=uK$, with $u\in \ma{M}$ of unit norm, $u(0)>0,$ and $u$ orthogonal to all elements of $\ma{M}$ vanishing at the origin, $K$  an $S^*$-invariant subspace, and the operator of multiplication by $u$  isometric from $K$ into $H^2$.
\end{thm}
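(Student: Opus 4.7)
The plan is to extract $u$ from the orthogonal decomposition of $\mathcal{M}$ induced by point evaluation at the origin, then realize $K$ as a ``quotient'' $\{f/u:f\in\mathcal{M}\}$ built by an iterative division process driven by near $S^*$-invariance. First I would set $\mathcal{M}_0:=\{f\in\mathcal{M}:f(0)=0\}$, which is closed since $f\mapsto f(0)$ is bounded on $H^2$. The degenerate case $\mathcal{M}_0=\mathcal{M}$ makes $\mathcal{M}$ itself $S^*$-invariant, so I may assume $\mathcal{M}_0$ has codimension one in $\mathcal{M}$; then $\mathcal{M}\ominus\mathcal{M}_0$ contains a unique unit vector $u$ with $u(0)>0$, which is orthogonal by construction to every element of $\mathcal{M}$ vanishing at the origin.

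For $f\in\mathcal{M}$, decomposing $f=\alpha_0 u+g_0$ with $\alpha_0=\langle f,u\rangle$ and $g_0\in\mathcal{M}_0$, near invariance gives $h_1:=S^*g_0=g_0/z\in\mathcal{M}$; iterating yields scalars $\alpha_k\in\mathbb{C}$ and elements $h_n\in\mathcal{M}$ with
$$f=\Bigl(\sum_{k=0}^{n-1}\alpha_k z^k\Bigr)u+z^n h_n,\qquad \|f\|^2=\sum_{k=0}^{n-1}|\alpha_k|^2+\|h_n\|^2,$$
the Pythagorean identity coming from orthogonality of $u$ to $\mathcal{M}_0$ and isometry of multiplication by $z$. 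Hence $F:=\sum_k\alpha_k z^k\in H^2$, and I set $K:=\{F:f\in\mathcal{M}\}$ with the ambient $H^2$ norm. To see $uF=f$: the remainder $z^n h_n\to 0$ weakly in $H^2$ (bounded norms, Fourier support receding to infinity), so the partial products $F_n u\to f$ weakly in $L^2$; separately $F_n u\to uF$ in $L^1(\mathbb{T})$ by Cauchy--Schwarz, forcing $uF=f$ almost everywhere and hence in $H^2$. The $S^*$-invariance of $K$ is then direct: for $F\in K$ with $f=uF$,
$$u\cdot S^*F=\frac{uF-u\,F(0)}{z}=\frac{f-(f(0)/u(0))u}{z},$$
whose numerator lies in $\mathcal{M}_0$, so the quotient lies in $\mathcal{M}$ by near invariance and $S^*F\in K$.

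The main obstacle is the isometric property $\|uF\|=\|F\|$, equivalent to $\|h_n\|\to 0$. I would analyse the contraction $R:\mathcal{M}\to\mathcal{M}$ defined by $Rf:=S^*(f-\langle f,u\rangle u)$, for which $R^n f=h_n$ and $\|Rf\|^2=\|f\|^2-|\langle f,u\rangle|^2$; any strong limit $h_\infty$ of $(h_n)$ must satisfy $Rh_\infty=h_\infty$, hence $\langle h_\infty,u\rangle=0$ and $S^*h_\infty=h_\infty$, whose only $H^2$ solution is $0$. The delicate point is upgrading the monotone decrease of $\|h_n\|$ to genuine strong convergence, ruling out a nontrivial weak cluster point; this is where the extremal characterisation of $u$ (as the unit vector in $\mathcal{M}$ with maximal $u(0)$) plays its role, following Hitt's original strategy.
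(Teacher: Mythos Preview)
The paper does not give its own proof of this statement: Theorem~\ref{thm Hitt} is quoted verbatim as a background result from Hitt's paper \cite{hitt}, with no argument supplied. There is therefore no in-paper proof to compare your proposal against.

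That said, your sketch is essentially Hitt's original strategy (as refined by Sarason in \cite{Sa1}): extract the extremal vector $u$ from $\mathcal{M}\ominus\mathcal{M}_0$, iterate the near-invariance to peel off the coefficient sequence $(\alpha_k)$, and use the Pythagorean identity to push $F=\sum_k\alpha_k z^k$ into $H^2$. Your handling of the degenerate case, the weak-limit argument for $uF=f$, and the verification that $K$ is $S^*$-invariant are all sound. You are also right that the crux is the isometry $\|uF\|=\|f\|$, i.e.\ $\|h_n\|\to 0$. Your fixed-point heuristic for the contraction $R$ is suggestive but, as you note, does not by itself force strong convergence of $(h_n)$; in Hitt's and Sarason's treatments this step is handled by a separate argument (Sarason's route goes through the de~Branges--Rovnyak space $\mathcal{H}(b)$ associated with $u$, which makes the isometric multiplication transparent). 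If you want a self-contained completion, that is the place to invest effort; the rest of your outline is correct.
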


The kernels of Toeplitz operators on $H^2$ are always nearly $S^*$-invariant. Model spaces, a specific type of Toeplitz kernel, are studied in relation to topics like  truncated Toeplitz operators,  Schr\"{o}dinger operators, Hankel operators and classical extremal problems (see, e.g. \cite[Chapter 5]{GMR}).  Recently, Fricain, Hartmann and Ross \cite{FHR} provided a necessary and sufficient condition for a function $F$ to multiply one model space $K_\theta$ into another model space. This is also related with the important work of Crofoot in \cite{Cro}.
 C\^{a}mara and Partington  later explored model spaces and Toeplitz operators in  general $H^p$ spaces on the upper half-plane (\cite{CMP4}), especially posing and characterising the maximal vectors in a model space. In \cite{CaP3}, they introduced a new perspective by considering multipliers between Toeplitz kernels in terms of the maximal vectors and  surjective multipliers (see Theorem \ref{wequalkernel}).

Composition operators on the Hardy space are fundamental objects in operator theory, with connections to complex analysis,  functional analysis and linear dynamics (see, e.g., \cite{CM1995,Ma,MSJ}). For composition operators on model spaces, Mashreghi and Shabankhah \cite{MS}
provided a complete description of bounded composition operators on model spaces $K_B$ for a finite Blaschke product $B$, showing that these  composition operators form a group on $K_B$.  They also characterized the minimal model space $K_v$ containing the image $C_\psi (K_\theta)$ in the following theorem. Here the minimal model space $K_v$ containing $C_\psi (K_\theta)$ means that $v$   divides all other inner functions with the inclusion property.

\begin{thm}\cite[Theorem 2.1]{MS1} \label{containing-model} Let $\psi$ and $\theta$ be   inner functions,  and let
\begin{align*}v(z)=\left\{                           \begin{array}{ll}
                               \theta\circ \psi, & \;\theta(0)\neq 0\;\mbox{and}\; \psi(0)=0,\\ z(\theta\circ \psi), & \;\theta(0)\neq 0\;\mbox{and}\; \psi(0)\neq0, \\
                               z(\theta\circ \psi)/\psi, &\;  \theta(0)=0.
                             \end{array}
                           \right.
 \end{align*}Then the mapping $C_\psi:\;K_\theta\rightarrow K_v$ is well-defined and bounded. 	Moreover, $K_v$ is the minimal model space containing  $C_\psi(K_\theta).$\end{thm}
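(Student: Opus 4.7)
The plan is to prove the theorem in two steps: establish the containment $C_\psi(K_\theta)\subseteq K_v$, then show $v$ is minimal among inner functions with this property.

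For containment, I first need to verify $v$ is inner in each case. In cases 1 and 2 this is immediate, since $\theta\circ\psi$ is inner and multiplication by $z$ preserves innerness. In case 3, I would factor $\theta(z)=z\tilde\theta(z)$ with $\tilde\theta$ inner, so that $(\theta\circ\psi)/\psi=\tilde\theta\circ\psi$ is inner and hence $v=z(\tilde\theta\circ\psi)$ is inner. For the inclusion itself, I would use the boundary characterisation: $f\in K_\theta$ iff $f\in H^2$ and $f=\theta\bar z\bar g$ on $\mathbb{T}$ for some $g\in H^2$. Substituting $\zeta\mapsto\psi(\zeta)$ (valid because $|\psi|=1$ a.e.\ on $\mathbb{T}$) gives
\[
f\circ\psi=(\theta\circ\psi)\,\bar\psi\,\overline{g\circ\psi}\quad\text{on}\ \mathbb{T},
\]
which I would then rewrite as $v\bar z\bar h$ for some $h\in H^2$, case by case: in case 1, writing $\psi=z\tilde\psi$ gives $\bar\psi=\bar z\,\overline{\tilde\psi}$ and $h=\tilde\psi(g\circ\psi)$; in case 2, one inserts an extra factor of $z$ to account for $\psi(0)\neq 0$; in case 3, $(\theta\circ\psi)\bar\psi=\tilde\theta\circ\psi$ cancels $\psi$ and produces the quotient form of $v$.

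For minimality, I would suppose $u$ is inner with $C_\psi(K_\theta)\subseteq K_u$ and aim at $v\mid u$. Writing $f\circ\psi=u\bar z\bar h'$ for some $h'\in H^2$ and comparing with the identity above, the condition translates into
\[
\alpha\,(g\circ\psi)\in H^2\quad\text{for every}\ g\in K_\theta,
\]
where $\alpha=u\psi/[z(\theta\circ\psi)]$ is unimodular on $\mathbb{T}$. Testing with the reproducing kernels of $K_\theta$ and exploiting that $(1-\bar w\psi)^{-1}\in H^\infty$ for $w\in\mathbb{D}$, the condition should reduce, after cancellation, to $\alpha\in H^2$; together with $|\alpha|=1$ a.e.\ this forces $\alpha\in H^\infty$, and in fact inner. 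Translating back, in case 3 one has $\alpha=u/v$ inner directly, while in cases 1 and 2 a short inner-factorisation argument is needed to extract $v\mid u$ from the fact that $\alpha$ is inner.

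The main obstacle I expect is this final factorisation step in cases 1 and 2: one must rule out any common inner factor between the auxiliary $\tilde\psi$ (or $\psi$) and $\theta\circ\psi$ that could block the conclusion. The Blaschke-zero part is elementary, because $\theta(0)\neq 0$ forces the zeros of $\tilde\psi$ or $\psi$ to be disjoint from those of $\theta\circ\psi$; the singular-inner part will likely require invoking structural results about compositions of singular inner functions.
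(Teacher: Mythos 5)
Your argument is correct in outline but takes a genuinely different route from the paper. You work directly from the boundary description $K_\theta=H^2\cap\theta\overline{z}\,\overline{H^2}$, push it through $C_\psi$ case by case, and then test the inclusion $C_\psi(K_\theta)\subseteq K_u$ against reproducing kernels of $K_\theta$ to force $\alpha=u\psi/[z(\theta\circ\psi)]$ to be inner (for the cancellation step you also need $\bigl(1-\overline{\theta(w)}(\theta\circ\psi)\bigr)^{-1}\in H^\infty$, which holds since $|\theta(w)|<1$). The paper instead derives the theorem as a corollary of its Toeplitz-kernel machinery: Corollary \ref{cor model} identifies $\Ker T_{(\overline{\theta}\circ\psi)\psi/z}$ as the minimal \emph{Toeplitz kernel} containing $C_\psi(K_\theta)$, and Proposition \ref{prop FG} converts the inclusion of that kernel into a model space $K_v$ into the condition $v\psi/[z(\theta\circ\psi)]\in\mathcal{N}_+$, after which the same three-case search for the minimal inner completion is carried out. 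Your approach is more elementary and self-contained; the paper's buys uniformity, since the identical template then handles the weighted and multiplier variants in Theorems \ref{thm:3.8} and \ref{thm uCpsi}, at the cost of invoking Theorems \ref{mini kernel}--\ref{equalkernel}.

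The one step you flag as an obstacle --- ruling out a common inner factor of $\psi/z$ (or $\psi$) and $\theta\circ\psi$ when $\theta(0)\neq0$ --- is genuinely needed (the paper's own case analysis silently uses the same fact to certify that $\phi=\psi/z$, resp.\ $\phi=\psi$, is the \emph{minimal} admissible inner function), but it does not require any structural theory of compositions of singular inner functions. Writing $\theta(w)=\theta(0)+w\,(S^*\theta)(w)$ gives
\[
\theta\circ\psi=\theta(0)+\psi\cdot\bigl((S^*\theta)\circ\psi\bigr),
\]
so any common inner divisor of $\psi$ and $\theta\circ\psi$ divides the nonzero constant $\theta(0)$ and is therefore trivial; the same then holds for $\psi/z$. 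With that one line inserted, your proof is complete.
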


 We generalize this result in Theorems \ref{thm:3.8} and \ref{thm uCpsi}.

Model spaces are themselves  Toeplitz kernels,  and we  recently proved that if $\psi$ is an automorphism, then the image of any Toeplitz kernel under $C_\psi$ remains a Toeplitz kernel, thus nearly $S^*$-invariant.

\begin{thm} \cite[Theorem 2.1]{LP4}\label{psiT}
Let $\psi$ be an automorphism and $\Ker T_F$ be a Toeplitz kernel with $F\in L^\infty$; then it follows that
$$C_\psi(\Ker T_F )= \Ker T_G, \;\mbox{where} \;G= (F\circ \psi)\frac{\psi}{z}.$$\end{thm}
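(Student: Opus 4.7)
The plan is to reduce the theorem to the pointwise equivalence $f\in\Ker T_F\iff C_\psi f\in\Ker T_G$ and then to verify this equivalence by a direct boundary computation on the unit circle. Since $\psi$ is an automorphism of $\DD$, the composition operator $C_\psi$ is a bounded bijection on $H^2$ with inverse $C_{\psi^{-1}}$, so establishing the pointwise equivalence is enough to conclude that $C_\psi(\Ker T_F)=\Ker T_G$. In boundary form the equivalence reads
$$Ff\in\overline{zH^2}\iff G\cdot(f\circ\psi)\in\overline{zH^2}.$$
The only nontrivial ingredients will be the inner-function identity $\overline{\psi(w)}=1/\psi(w)$ for $|w|=1$ and the fact that composition with an automorphism preserves $H^2$.

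For the forward implication I would write $Ff=\bar z\,\bar k$ with $k\in H^2$, compose with $\psi$ on $\mathbb{T}$ to get
$$(F\circ\psi)(f\circ\psi)=\overline{\psi}\,\overline{k\circ\psi},$$
and then multiply through by $\psi/z$. Using $\psi\overline{\psi}=1$ and $1/z=\bar z$ on the circle, the right-hand side collapses to $\bar z\,\overline{k\circ\psi}\in\overline{zH^2}$, because $k\circ\psi\in H^2$. The left-hand side is exactly $G\cdot(f\circ\psi)$, so $C_\psi f\in\Ker T_G$, giving the inclusion $C_\psi(\Ker T_F)\subseteq \Ker T_G$.

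For the reverse inclusion I would exploit the fact that the whole calculation is symmetric under interchanging $(\psi,F)$ with $(\psi^{-1},G)$: a one-line check shows $(G\circ\psi^{-1})\,(\psi^{-1}/z)=F$ on $\mathbb{T}$, so running the forward argument with the pair $(\psi^{-1},G)$ converts any $h\in\Ker T_G$ into an element $h\circ\psi^{-1}\in\Ker T_F$ with $C_\psi(h\circ\psi^{-1})=h$. The one point that demands a moment of care is the status of the factor $\psi/z$: it fails to be in $H^\infty$ when $\psi(0)\neq 0$, but it does belong to $L^\infty(\mathbb{T})$ with unit modulus, so $G\in L^\infty(\mathbb{T})$ and $T_G$ is a well-defined Toeplitz operator. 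I expect the main obstacle to amount to no more than bookkeeping — keeping careful track of which identities are valid only on $\mathbb{T}$ versus in the open disk — since the inner-function identity $\overline{\psi}=1/\psi$ does all the real work.
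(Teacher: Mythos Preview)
Your argument is correct. The forward inclusion is exactly the boundary computation you describe, and the symmetry check $(G\circ\psi^{-1})(\psi^{-1}/z)=F$ on $\mathbb{T}$ legitimately reduces the reverse inclusion to a second application of the forward one. The only delicate points---that composition by an automorphism preserves both $H^2$ and $\overline{zH^2}$ (via the bounded, bounded-below change of variable on $\mathbb{T}$), and that $\psi/z$ is merely unimodular in $L^\infty(\mathbb{T})$ rather than in $H^\infty$---you have identified and handled.

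As for comparison: the present paper does not prove this statement at all. Theorem~\ref{psiT} is quoted from \cite[Theorem 2.1]{LP4} as a preliminary result, with no argument supplied here. What the paper does prove is the broader Theorem~\ref{minimal kernel}, which shows that for \emph{any} inner $\psi$ the kernel $\Ker T_{(F\circ\psi)\psi/z}$ is the minimal Toeplitz kernel containing $C_\psi(\Ker T_F)$; the automorphism case then follows by invoking Theorems~\ref{psiT} and~\ref{thm CphiM} together. The method there is structurally different from yours: rather than a symmetric boundary calculation, it passes through maximal vectors and the inner--outer factorization, using Theorem~\ref{mini kernel} and Theorem~\ref{equalkernel} to identify $\mathrm{K}_{\min}(k\circ\psi)$ with $\Ker T_{(F\circ\psi)\psi/z}$. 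Your direct approach is more elementary and self-contained for the automorphism case specifically, while the paper's maximal-vector machinery is what allows the extension to arbitrary inner $\psi$.
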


However, the next theorem shows that if the inner function $\psi$ is not an automorphism, the image of the Toeplitz kernel under  $C_\psi$ no longer retains near $S^*$-invariance.

\begin{thm}\label{thm CphiM}\cite[Theorem 2.11]{LP4}
Let $\mathcal{M}$ be a subspace of $H^2$ of dimension at least $2$, and let $\psi$ be an inner function that is not an automorphism on $\mathbb{D}$. Then $C_\psi(\mathcal{M})$ is not nearly $S^*$-invariant (and thus not a Toeplitz kernel).\end{thm}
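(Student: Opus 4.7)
The plan is to argue by contradiction: assuming $C_\psi(\mathcal{M})$ is nearly $S^*$-invariant, I will produce a nonzero element of $\mathcal{M}$ that must vanish identically. First, I would reduce to the case $\psi(0)=0$: let $\alpha$ be a disc automorphism with $\alpha(0)=\psi(0)$ and set $\tilde\psi=\alpha^{-1}\circ\psi$, so $C_\psi=C_{\tilde\psi}C_\alpha$ and $C_\psi(\mathcal{M})=C_{\tilde\psi}(C_\alpha\mathcal{M})$. Invertibility of $C_\alpha$ on $H^2$ shows $C_\alpha\mathcal{M}$ still has dimension at least $2$, while $\tilde\psi$ is inner, non-automorphic, and vanishes at $0$. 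After renaming, I may write $\psi=z\phi$ with $\phi$ a non-constant inner function.

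Next, because $\dim\mathcal{M}\geq 2$, the evaluation functional $f\mapsto f(0)$ has non-trivial kernel in $\mathcal{M}$, so there is a nonzero $f\in\mathcal{M}$ with $f(0)=0$; write $f=zg$ with $g\in H^2$ and $g\not\equiv 0$. Then $f\circ\psi=z\,\phi\cdot(g\circ\psi)$ vanishes at $0$, so near $S^*$-invariance forces $(f\circ\psi)/z=\phi\cdot(g\circ\psi)$ to belong to $C_\psi(\mathcal{M})$. Consequently there exists $h\in\mathcal{M}$ with
\[
h\circ\psi \;=\; \phi\cdot(g\circ\psi) \quad\text{on }\mathbb{D}.
\]

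The contradiction then flows from the multivaluedness of $\psi$. For any $z_1\neq z_2$ in $\mathbb{D}$ with $\psi(z_1)=\psi(z_2)=w$ and $w\neq 0$, evaluating the displayed identity at $z_1$ and $z_2$ gives $\phi(z_1)g(w)=\phi(z_2)g(w)$; but $\phi(z_i)=w/z_i$ with $z_i\neq 0$ forces $\phi(z_1)\neq\phi(z_2)$, so $g(w)=0$. Since $\psi$ is inner and not an automorphism, it fails to be injective on $\mathbb{D}$ (any injective inner function is a M\"{o}bius transformation), so there exist $z_1\neq z_2$ with $\psi(z_1)=\psi(z_2)$; the open-mapping theorem then yields an open neighborhood of $\psi(z_1)$ each of whose points has a preimage near $z_1$ and another near $z_2$, and intersecting with $\mathbb{D}\setminus\{0\}$ shows $g$ vanishes on a non-empty open subset of $\mathbb{D}$. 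Hence $g\equiv 0$ and $f\equiv 0$, the desired contradiction. The delicate step is this last one---upgrading the bare non-injectivity of $\psi$ to an open set of multi-preimage values---and it can equivalently be handled by Frostman's theorem, which provides a uniform treatment across finite Blaschke products, infinite Blaschke products, and inner functions with non-trivial singular factors.
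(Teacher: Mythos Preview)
The paper does not contain its own proof of this statement; Theorem~\ref{thm CphiM} is quoted verbatim from \cite[Theorem 2.11]{LP4} and used as a black box in what follows, so there is nothing in the present paper to compare your argument against.

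That said, your argument is correct. The reduction to $\psi(0)=0$ via $C_\psi=C_{\tilde\psi}C_\alpha$ is clean (and $C_\alpha$ is an isomorphism of $H^2$, so $\dim C_\alpha\mathcal M\ge 2$). Writing $\psi=z\phi$ with $\phi$ non-constant inner, choosing a nonzero $f=zg\in\mathcal M$, and using near $S^*$-invariance to obtain $h\in\mathcal M$ with $h\circ\psi=\phi\,(g\circ\psi)$ is straightforward. For the step you flag as delicate, the open-mapping argument is enough as written: take disjoint neighbourhoods $U_1,U_2$ of the two preimages, set $V=\psi(U_1)\cap\psi(U_2)$, and every $w\in V\setminus\{0\}$ has distinct preimages in $U_1$ and $U_2$; since $\phi(z_i)=w/z_i$ with $z_1\neq z_2$ and $w\neq 0$, you get $g\equiv 0$ on a nonempty open set. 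The one fact used without proof is that a univalent inner function must be a disc automorphism. This is standard: a finite Blaschke product of degree $n\ge 2$ is $n$-valent, while Frostman's theorem shows any other inner function takes almost every value in $\mathbb D$ infinitely often. Either observation suffices, so the proof stands.
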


These two results imply that for an $F\in L^\infty$ such that $\dim \ker T_F > 1$, and an inner function  $\psi$,  $C_\psi(\Ker T_F)$ is nearly $S^*$-invariant if and only if $\psi$ is an automorphism on $\mathbb{D}$. Especially, Theorem  \ref{thm CphiM} says that otherwise $C_\psi(\Ker T_F)$ is no longer a Toeplitz kernel and therefore motivates us to find the minimal Toeplitz kernel containing it when $\psi$ is an arbitrary inner function.

In this paper, we address the aforementioned question  in three distinct sections. In the main result, Theorem \ref{minimal kernel}, we prove the minimal Toeplitz kernel with symbol $(F\circ \psi)\psi/z$   contains  $C_\psi(\Ker T_F)$ for an inner  $\psi$ and $F\in L^\infty\setminus\{0\}$, while also describing the relations between their maximal vectors.
As a special case, this provides an alternative and concise proof of the key result, Theorem \ref{containing-model}, originally established by Mashreghi and Shabankhah. In Section 3, building on \cite[Theorem 5.1]{CaP5}, we characterize the equivalent condition that ensures   $u\Ker T_F$ is a Toeplitz kernel through Carleson measures in Proposition  \ref{prop equal}. We then extend this by combining the multiplier and composition operators  to formulate the minimal Toeplitz kernels containing $C_\psi(u\Ker T_F)$ or $uC_\psi(\Ker T_F)$. These lead to  two extensions of  Theorem \ref{containing-model} in Theorems \ref{thm:3.8} and \ref{thm uCpsi}. Finally, in Section 4, we use the equivalences between Toeplitz kernels  to derive several results concerning the maximal vectors of Toeplitz kernels. Specially, we provide an example  showing that the sum of two Toeplitz kernels is not necessarily nearly $S^*$-invariant when their intersection is $\{0\}.$

\section{Minimal Toeplitz kernel containing $C_\psi(\Ker T_F)$ for an inner  $\psi$}
In this section, we first review the definitions of the minimal Toeplitz kernel containing a vector and the maximal vector for a Toeplitz kernel (including model spaces). We then consider the same ideas for the image of a Toeplitz kernel under a composition operator.

\begin{defn} \cite[Definition 2.1]{CaP3} For a function $k\in H^2\setminus\{0\}$, we write ${\rm K_{min}}(k)$ for the minimal Toeplitz kernel containing $k,$ that is, ${\rm K_{min}}(k)=\Ker T_g$ for some $g\in L^\infty,$ with $k\in {\rm K_{min}}(k),$ while $\Ker T_g\subseteq \Ker T_h$ for every $h\in L^\infty$ such that $k\in \ker T_h.$ Meanwhile, $k$ is called a maximal vector (or a maximal function) for the Toeplitz kernel $\Ker T_g$. \end{defn}

The existence of minimal kernels and maximal vectors in $H^2$ was presented in the following theorem, of which the upper half-plane case can be found in \cite[Theorem 5.1 and Corollary 5.1]{CaP1}.
\begin{thm} \label{mini kernel}\cite[Theorem 3.3]{CaP2} Let $h\in H^2\setminus \{0\}$ and $h=IO$ be its inner-outer factorization. Then there exists a minimal Toeplitz kernel ${\rm K_{min}}(h)$ containing $h$, namely
$${\rm K_{min}}(h)=\Ker T_{\overline{z} \overline{IO}/O}$$ and $h$ is a maximal vector for  ${\rm K_{min}}(h)$.
\end{thm}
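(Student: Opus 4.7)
The plan is to verify the two defining properties of ${\rm K_{min}}(h) = \Ker T_{\tilde g}$, where $\tilde g := \overline{z}\,\overline{IO}/O$: namely that $h\in\Ker T_{\tilde g}$, and that $\Ker T_{\tilde g}\subseteq\Ker T_g$ for every $g\in L^\infty$ with $h\in\Ker T_g$. The maximal vector assertion then follows directly from the definition.

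For the first containment, $|I|=1$ almost everywhere on $\mathbb T$ forces $|\tilde g|=1$ a.e., so $\tilde g\in L^\infty$; a direct computation on $\mathbb T$ collapses $\tilde g\cdot h = \overline{z}\,\overline{IO}\cdot IO/O$ to $\overline{z}\,\overline{O}$, which lies in $\overline{H_0^2}=\overline{zH^2}$ because $O\in H^2$. Hence $T_{\tilde g}h = P_{H^2}(\tilde g h) = 0$.

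For minimality, if $T_g h=0$ then $gh = \overline{z}\,\overline{\phi}$ for some $\phi\in H^2$. Isolating $g$ and comparing with $\tilde g$ produces the factorization $g = \tilde g\cdot\overline{\phi/O}$ on $\mathbb T$; the critical observation is that outerness of $O$ makes $1/O\in\mathcal N_+$, so $\phi/O\in\mathcal N_+$. Given any $f\in\Ker T_{\tilde g}$, writing $\tilde g f = \overline{z}\,\overline{q}$ with $q\in H^2$ and multiplying by $\overline{\phi/O}$ yields $gf = \overline{z}\,\overline{q\phi/O}$. Since $gf$ lies in $L^2$, so does $q\phi/O$, and together with $q\phi/O\in\mathcal N_+$, Smirnov's theorem $\mathcal N_+\cap L^2 = H^2$ upgrades this to $q\phi/O\in H^2$, whence $gf\in\overline{H_0^2}$ and $f\in\Ker T_g$.

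The delicate point is the minimality step: dividing by the outer function $O$ takes one outside $L^\infty$ in general, so one must verify that the relevant intermediate quotients remain in $\mathcal N_+$ before invoking Smirnov's theorem as the essential input converting an \emph{a priori} $L^2$ bound into genuine $H^2$ membership. Once this factorization-plus-Smirnov argument is in place, the containment $\Ker T_{\tilde g}\subseteq\Ker T_g$ is immediate, and the maximal vector conclusion is just the definition of ${\rm K_{min}}(h)$.
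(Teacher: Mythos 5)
Your proof is correct, and it is essentially the standard argument: the paper itself states this theorem without proof, citing \cite[Theorem 3.3]{CaP2}, and the argument there (and in the half-plane version \cite[Theorem 5.1]{CaP1}) proceeds exactly as you do --- verify $h\in\Ker T_{\overline{z}\,\overline{IO}/O}$ by the computation $\tilde g h=\overline{z}\,\overline{O}\in\overline{zH^2}$, then obtain minimality by factoring any admissible symbol as $g=\tilde g\,\overline{\phi/O}$ and using the Smirnov maximum principle $\mathcal N_+\cap L^2=H^2$ to push membership back into $\Ker T_g$. You have correctly isolated the one delicate point (that division by the outer factor $O$ stays in $\mathcal N_+$ so that the $L^2$ bound on $gf$ can be upgraded), so there is nothing to add.
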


The definition of the minimal Toeplitz kernel containing a subspace can be stated similarly. Additionally, all the maximal vectors for a Toeplitz kernel can be characterized as follows.

 \begin{thm}\cite[Theorem 2.2]{CaP3}\label{maximal1} Let $g\in L^\infty\setminus\{0\}$ be such that $\Ker T_g\neq \{0\}.$ Then $k$ is a maximal vector for $\Ker T_g$ if and only if $k\in H^2$ and $k=g^{-1}\overline{z}\bar{p}$, where $p$ is outer in $H^2.$\end{thm}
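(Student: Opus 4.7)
The plan is to prove both implications using Theorem \ref{mini kernel} to describe $K_{\min}(k)$ via the inner-outer factorisation $k = k_I k_O$, together with the standard characterisation $\mathcal{N}_+ \cap L^2(\mathbb{T}) = H^2$. The starting observation is that any $k \in \Ker T_g$ automatically satisfies $gk = \overline{z}\,\overline{p}$ for a uniquely determined $p \in H^2$, since this is just a restatement of $gk \in \overline{H_0^2}$. Thus the theorem claims that maximality of $k$ is equivalent to outerness of this $p$, while from Theorem \ref{mini kernel} the minimal kernel $K_{\min}(k)$ has the explicit symbol $\overline{z}\,\overline{k}/k_O$.

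For the necessity $(\Rightarrow)$ I would argue by contrapositive. Let $p = JQ$ be the inner-outer factorisation with $J$ nonconstant inner, and set $h := kJ$. A direct calculation on $\mathbb{T}$ using $J\overline{J}=1$ gives
\begin{equation*}
gh \;=\; gkJ \;=\; \overline{z}\,\overline{p}\,J \;=\; \overline{z}\,\overline{Q} \;\in\; \overline{H_0^2},
\end{equation*}
so $h \in \Ker T_g$. However, applying the canonical symbol of $K_{\min}(k)$ and using $|k_I|=1$ yields $(\overline{z}\,\overline{k}/k_O)h = \overline{z}\,\overline{k_O}\,J$; its conjugate is $z\,k_O/J$ on $\mathbb{T}$, which fails to belong to $H^2$ since $J$ is nonconstant inner and $k_O$ is outer. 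Hence $h \notin K_{\min}(k)$, contradicting maximality, so $p$ must be outer.

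For the sufficiency $(\Leftarrow)$, $k \in \Ker T_g$ is immediate. To establish $\Ker T_g \subseteq K_{\min}(k)$, pick any $h \in \Ker T_g$, write $gh = \overline{z}\,\overline{q}$ with $q \in H^2$, and substitute $g = \overline{z}\,\overline{p}/k$ together with $|k|^2 = k_O\overline{k_O}$ on $\mathbb{T}$ to obtain
\begin{equation*}
\frac{\overline{z}\,\overline{k}}{k_O}\,h \;=\; \frac{\overline{z}\,\overline{k_O}\,\overline{q}}{\overline{p}}.
\end{equation*}
This lies in $\overline{H_0^2}$ if and only if $k_O q/p \in H^2$. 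Since $k_O q \in H^1 \subset \mathcal{N}_+$ and $p$ is outer, the quotient $k_O q/p$ lies in $\mathcal{N}_+$, and its boundary modulus equals $|h| \in L^2(\mathbb{T})$; the identity $\mathcal{N}_+ \cap L^2(\mathbb{T}) = H^2$ then yields $k_O q/p \in H^2$, so $h \in K_{\min}(k)$.

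The main obstacle is the sufficiency step: purely algebraic manipulation of symbols does not close the argument, and one has to appeal to the Smirnov class together with the $L^2$-modulus criterion for $H^2$. This is precisely where outerness of $p$ is essential, since division within $\mathcal{N}_+$ by an outer $H^2$ function preserves the Smirnov class. The necessity direction is comparatively easy once one hits on the explicit vector $h = kJ$.
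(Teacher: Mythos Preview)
The paper does not actually prove this theorem: it is quoted verbatim as \cite[Theorem 2.2]{CaP3} and used as a tool, with no proof environment following the statement. There is therefore nothing in the paper to compare your argument against.

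That said, your proof is correct and self-contained. Both directions are handled properly: for necessity, the test vector $h=kJ$ with $J$ the nonconstant inner factor of $p$ indeed lies in $\Ker T_g$ but not in $K_{\min}(k)=\Ker T_{\overline z\,\overline k/k_O}$, since $k_O/J\notin H^2$ when $k_O$ is outer and $J$ is nonconstant inner; for sufficiency, the reduction to showing $k_Oq/p\in H^2$ via the Smirnov class and the criterion $\mathcal N_+\cap L^2(\mathbb T)=H^2$ is exactly the right mechanism, and your observation that $|k_Oq/p|=|h|$ on $\mathbb T$ is the key modulus identity. The only minor point worth making explicit is that $h=kJ\in H^2$ because $J\in H^\infty$, which you use implicitly.
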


\subsection{The minimal Toeplitz kernel containg  $C_\psi (\Ker T_F)$.}
In this subsection, we examine the transformation of Toeplitz kernels under composition operators with inner function symbols and derive explicit formulas for the minimal Toeplitz kernels that contain  their images. Meanwhile, the correspondence of maximal vectors between Toeplitz kernels is characterized.

In our proof, the following theorem  plays a crucial role in establishing the equivalent conditions for the equalities of Toeplitz kernels.

\begin{thm}\cite[Corollary 7.9]{CaP2}\label{equalkernel}  Let $g, h\in L^\infty\setminus\{0\}$  be such that $\Ker T_g$ and $\Ker T_h$ are nontrivial. Then $\Ker T_g=\Ker T_h$ if and only if $$\frac{g}{h}=\frac{\overline{p}}{\overline{q}},\;\;p, q\in H^2 \;\mbox{outer}.$$  If moreover $hg^{-1}\in \mathcal{G}L^\infty,$ then we have $$\Ker T_g=\Ker T_h\;\;\mbox{if and only if}\;\;hg^{-1}\in \mathcal{G}\overline{H^\infty}.$$ \end{thm}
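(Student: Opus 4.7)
The plan is to exploit the explicit description of maximal vectors (Theorem \ref{maximal1}) for the forward implication and Smirnov class arguments for the converse, with all boundary identities read a.e.\ on $\mathbb{T}$.

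For the forward direction, I would choose a maximal vector $k$ of the common kernel $\Ker T_g = \Ker T_h$. Applying Theorem \ref{maximal1} to each symbol separately produces two simultaneous representations $k = g^{-1}\bar{z}\bar{p}$ and $k = h^{-1}\bar{z}\bar{q}$ with $p, q$ outer in $H^2$; equating them yields $g/h = \bar{p}/\bar{q}$ at once.

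For the converse, I would take $f \in \Ker T_g$ and write $gf = \bar{z}\bar{s}$ for some $s \in H^2$. Substituting the assumed identity $g = h\bar{p}/\bar{q}$ gives $hf = \bar{z}\,\overline{qs/p}$. The crucial step is to verify that $qs/p \in H^2$: since $q, s \in H^2 \subseteq \mathcal{N}_+$ and $p$ is outer in $H^2$, the ratio $qs/p$ lies in $\mathcal{N}_+$; its boundary modulus equals $|hf| \in L^2$; and the Smirnov principle $\mathcal{N}_+ \cap L^2 = H^2$ then forces $qs/p \in H^2$, so $f \in \Ker T_h$. By symmetry the two kernels coincide. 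For the moreover clause with $hg^{-1} \in \mathcal{G}L^\infty$: in one direction, writing $hg^{-1} = \overline{q/p}$, the modulus $|q/p|$ is bounded above and bounded away from zero, so the same Smirnov argument applied to both $q/p$ and $p/q$ places them in $H^\infty$, giving $q/p \in \mathcal{G}H^\infty$ and hence $hg^{-1} \in \mathcal{G}\overline{H^\infty}$. In the other direction, an invertible element of $H^\infty$ has trivial inner factor and is therefore outer, so if $hg^{-1} = \bar{u}$ with $u \in \mathcal{G}H^\infty$, then $u$ itself is outer and fits the first equivalence with $p = u$, $q = 1$.

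The main obstacle is the careful bookkeeping of Smirnov class memberships — specifically confirming that dividing $H^2$ functions by outer ones keeps us in $\mathcal{N}_+$, and then upgrading to $H^2$ or $H^\infty$ by invoking integrability or boundedness of the boundary data. Once this foundation is in place, the rest of the argument reduces to algebraic manipulation of the boundary identity $g/h = \bar{p}/\bar{q}$.
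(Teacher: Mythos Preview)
The paper does not supply its own proof of this theorem: it is quoted verbatim as \cite[Corollary~7.9]{CaP2} and used as a black box throughout. There is therefore no in-paper argument to compare your proposal against.

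That said, your argument is sound. The forward direction via a common maximal vector and Theorem~\ref{maximal1}, and the converse via the Smirnov principle $\mathcal{N}_+\cap L^2=H^2$, are both correct, as is the treatment of the moreover clause. One trivial slip: in the last sentence you want $p=1$, $q=u$ (or $p=u^{-1}$, $q=1$) rather than $p=u$, $q=1$, since $hg^{-1}=\bar u$ gives $g/h=\overline{u^{-1}}$, not $\bar u$; this changes nothing of substance because $u^{-1}$ is equally outer.
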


We can now present our first main result.
\begin{thm}\label{minimal kernel} Let $\psi$ be inner  and $F\in L^\infty\setminus\{0\}$ such that  $\Ker T_F\neq \{0\}$. Then there exists a minimal Toeplitz kernel  containing $C_\psi (\Ker T_F)$,
which is $\Ker T_{(F\circ \psi)\psi/z}$. Moreover, if $k$ is a maximal vector  for  $\Ker T_F$,
then $k\circ \psi$ is a maximal vector for  $\Ker T_{(F\circ \psi)\psi/z}$.
Further,  $C_\psi (\Ker T_F)= \Ker T_{(F\circ \psi)\psi/z}$ if and only if $\psi$ is an automorphism on $\mathbb{D}$. \end{thm}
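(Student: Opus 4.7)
The plan is to proceed in three stages: first establish the inclusion $C_\psi(\Ker T_F) \subseteq \Ker T_{(F\circ\psi)\psi/z}$ by direct calculation; second, identify the right-hand side with ${\rm K_{min}}(k\circ\psi)$ for any maximal vector $k$ of $\Ker T_F$, which simultaneously yields both the minimality claim and the maximal-vector statement; finally settle the automorphism equivalence using Theorems \ref{psiT} and \ref{thm CphiM}.

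For the inclusion, I take $f \in \Ker T_F$, so that $Ff = \bar{z}\bar{h}$ a.e.\ on $\mathbb{T}$ for some $h \in H^2$. Composing with $\psi$ and using $\bar\psi\psi = 1$ on $\mathbb{T}$ gives
$$\frac{(F\circ\psi)\psi}{z}\,(f\circ\psi)=\bar\psi\,\overline{h\circ\psi}\cdot\frac{\psi}{z}=\bar{z}\,\overline{h\circ\psi}\in\overline{zH^2},$$
so $f\circ\psi\in\Ker T_{(F\circ\psi)\psi/z}$, because $h\circ\psi\in H^2$ by Littlewood's subordination principle.

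For the identification, let $k=IO$ be the inner-outer factorization of a maximal vector. Theorem \ref{maximal1} gives an outer $p\in H^2$ with $F=\bar{z}\bar{I}\bar{p}/O$ a.e.\ on $\mathbb{T}$, and substitution produces $(F\circ\psi)\psi/z=\bar{z}\,\overline{I\circ\psi}\,\overline{p\circ\psi}/(O\circ\psi)$. The key auxiliary fact I would invoke is that the composition of an outer $H^2$ function with an inner function is outer (verified via Jensen's equality combined with the identity $\int_{\mathbb{T}}\log|u\circ\psi|\,dm=\log|u(\psi(0))|$ for $u$ outer, since inner functions push forward $m$ to the harmonic measure at $\psi(0)$). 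Consequently $k\circ\psi=(I\circ\psi)(O\circ\psi)$ has inner part $I\circ\psi$ and outer part $O\circ\psi$, so Theorem \ref{mini kernel} gives ${\rm K_{min}}(k\circ\psi)=\Ker T_{\bar{z}\overline{I\circ\psi}\,\overline{O\circ\psi}/(O\circ\psi)}$, whose symbol differs from $(F\circ\psi)\psi/z$ by the factor $\overline{p\circ\psi}/\overline{O\circ\psi}$, a quotient of conjugates of outer $H^2$ functions. Theorem \ref{equalkernel} therefore yields $\Ker T_{(F\circ\psi)\psi/z}={\rm K_{min}}(k\circ\psi)$, and since $k\circ\psi\in C_\psi(\Ker T_F)$, this is the minimal Toeplitz kernel containing the image, with $k\circ\psi$ as a maximal vector.

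For the automorphism equivalence, the ``if'' direction is immediate from Theorem \ref{psiT}. For ``only if'', when $\dim\Ker T_F\geq 2$, Theorem \ref{thm CphiM} tells us that $C_\psi(\Ker T_F)$ fails to be nearly $S^*$-invariant unless $\psi$ is an automorphism; since every Toeplitz kernel is nearly $S^*$-invariant, equality forces $\psi$ to be an automorphism. The main obstacle is the preservation of outerness under composition with inner functions, which is precisely what unlocks Theorem \ref{equalkernel} to identify the image's minimal Toeplitz kernel with $\Ker T_{(F\circ\psi)\psi/z}$.
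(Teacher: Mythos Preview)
Your proof is correct and follows essentially the same route as the paper's: inner--outer factorization of a maximal vector $k=IO$, preservation of outerness under composition with an inner function, Theorem~\ref{mini kernel} to compute ${\rm K_{min}}(k\circ\psi)$, Theorem~\ref{equalkernel} to identify this with $\Ker T_{(F\circ\psi)\psi/z}$, and Theorems~\ref{psiT} and~\ref{thm CphiM} for the automorphism equivalence. The only cosmetic differences are that you establish the inclusion $C_\psi(\Ker T_F)\subseteq\Ker T_{(F\circ\psi)\psi/z}$ directly from $Ff=\bar z\bar h$ (the paper instead routes the inclusion through ${\rm K_{min}}(k\circ\psi)$ using the $I,O$ factorization) and that you invoke Theorem~\ref{maximal1} in place of the paper's combination of Theorem~\ref{mini kernel} and Theorem~\ref{equalkernel} to express $F$; also note that, like the paper, your ``only if'' argument via Theorem~\ref{thm CphiM} tacitly assumes $\dim\Ker T_F\ge 2$.
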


\begin{proof} With $\Ker T_F\neq \{0\}$ and  $k$   a maximal vector for $\Ker T_F$, we decompose $k=IO$ with $I$ inner and $O$ outer in $H^2$.  Then Theorem \ref{mini kernel} implies that
\begin{align}\Ker T_F=\Ker T_{\overline{z}\overline{IO}/O}.\label{kerF}\end{align}
Next we consider the minimal kernel  ${\rm K_{ min}}(k\circ \psi)$ containing $k\circ \psi=(I\circ \psi)(O\circ \psi)\in C_\psi(\Ker T_F)\subseteq H^2.$ \cite[Proposition 6.5]{GMR} implies that $I\circ \psi$ is inner and \cite[Theorem 4.27]{CHL} implies that $O\circ \psi$ is outer, so we have
\begin{align}{\rm K_{ min}}(k\circ \psi)=\ker T_{\frac{\overline{z}\overline{k\circ \psi}}{O\circ \psi}}.\label{mink}\end{align}

 Now take any  nonzero element $h$ of $\Ker T_F,$ then  \eqref{kerF} implies that
$$\frac{\overline{z}\overline{IO}}{O}h\in \overline{zH^2}.$$  We deduce that $$\frac{ (\overline{I\circ \psi})(\overline{ O\circ\psi})}{O\circ \psi} (h\circ \psi)\in \overline{ H^2},$$
 and then we  further  derive that
\begin{align*} \Big(\frac{\overline{z}\overline{k\circ \psi}}{O\circ \psi}\Big)(h\circ \psi) =\Big(\overline{z}
\frac{(\overline{I\circ \psi})(\overline{O\circ \psi})}{O\circ \psi}\Big)(h\circ \psi)\in  \overline{z H^2}. \end{align*}
This indicates that $C_\psi(h)\in {\rm K_{min}}(k\circ \psi)$ for any  $h\in \Ker T_F,$ implying \begin{align*}C_\psi(\Ker T_F)\subseteq {\rm K_{ min}}(k\circ \psi).\label{subset} \end{align*}

Since $C_\psi(k)\in C_\psi(\Ker T_F),$ any Toeplitz kernel containing $C_\psi(\Ker T_F)$ must  contain ${\rm K_{ min}}(k\circ \psi)$. Thus we conclude that ${\rm K_{ min}}(k\circ \psi)$ is the minimal kernel containing $C_\psi(\Ker T_F)$.

Finally, using \eqref{kerF}, Theorem \ref{equalkernel} yields that
$$F=\overline{z}\frac{\overline{IO}}{O}\frac{\overline{p}}{\overline{q}}$$ with $p, q\in  H^2$ outer. This further implies $$zF= \frac{\overline{IO}}{O}\frac{\overline{p}}{\overline{q}}$$ with $p, q\in  H^2$ outer. Therefore,
\begin{eqnarray}\Big(\frac{\overline{z} \overline{k\circ \psi}}{O\circ \psi}\Big) \frac{\overline{p\circ\psi}}{\overline{q \circ \psi}}
= \frac{\overline{z} (\overline{I\circ \psi}) (\overline{O\circ \psi})}{O\circ \psi} \frac{\overline{p\circ\psi}}{\overline{q \circ \psi}}=(F \circ \psi)\frac{\psi}{z}. \label{zFpsi}\end{eqnarray}

\cite[Theorem 4.27]{CHL} shows  $p \circ\psi$ and $ q\circ\psi$ are outer functions. Using  \eqref{mink}, \eqref{zFpsi} and Theorem \ref{equalkernel}, we obtain that $${\rm K_{ min}}(k\circ \psi)=\Ker T_{( F\circ \psi)\psi/z},$$ which is the minimal Toeplitz kernel containing $C_\psi (\Ker T_F).$
Moreover, $k\circ \psi$ is a maximal vector for $\Ker T_{( F\circ \psi)\psi/z}.$

Meanwhile Theorems \ref{psiT} and \ref{thm CphiM} imply
$C_\psi (\Ker T_F)= \Ker T_{(F\circ \psi)\psi/z}$ if and only if $\psi$ is an automorphism on $\mathbb{D}$, ending the proof.\end{proof}

Letting $F=\overline{\theta}$  in Theorem \ref{minimal kernel}, some corollaries on model spaces  follow.

\begin{cor} \label{cor model} Let $\psi$ and $\theta$ be two inner functions such that $K_\theta\neq \{0\}$; then there is a minimal Toeplitz kernel $\Ker T_{(\overline{\theta}\circ \psi) \psi/z}$ containing $C_\psi(K_\theta)$ and  equality holds if and only if $\psi$ is an automorphism.  For an automorphism $\psi$, it holds that
$$ \Ker  T_{(\overline{\theta}\circ \psi)\psi/z}=\frac{1}{\sqrt{\psi'}}K_{\theta\circ \psi}.$$  \end{cor}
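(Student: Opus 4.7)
The first assertion is an immediate specialisation of Theorem \ref{minimal kernel}: setting $F=\overline\theta$ gives $\Ker T_F=K_\theta$, and the symbol $(F\circ\psi)\psi/z$ becomes $(\overline\theta\circ\psi)\psi/z$, so both the inclusion $C_\psi(K_\theta)\subseteq\Ker T_{(\overline\theta\circ\psi)\psi/z}$ and the ``equality iff $\psi$ is an automorphism'' clause transfer verbatim. Hence the only substantive task is, when $\psi$ is a disc automorphism, to identify this Toeplitz kernel with the weighted model space $\frac{1}{\sqrt{\psi'}}K_{\theta\circ\psi}$.

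For an automorphism $\psi$, Theorem \ref{psiT} applied with $F=\overline\theta$ already gives $C_\psi(K_\theta)=\Ker T_{(\overline\theta\circ\psi)\psi/z}$, so it suffices to prove
$$C_\psi(K_\theta)=\tfrac{1}{\sqrt{\psi'}}K_{\theta\circ\psi},$$
equivalently $\sqrt{\psi'}\,(K_\theta\circ\psi)=K_{\theta\circ\psi}$. The plan is to exploit the classical \emph{weighted} composition operator $U_\psi f:=\sqrt{\psi'}\,(f\circ\psi)$, with any fixed holomorphic branch of $\sqrt{\psi'}$ (which exists because $\psi'$ is zero-free on $\DD$). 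For a disc automorphism, $U_\psi$ is a unitary operator on $H^2$, a standard fact which will be the main (and essentially only) non-trivial input.

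Granting that, the rest is a two-line computation. Since $\psi$ is an automorphism, $\theta\circ\psi$ is inner (cf.\ \cite[Proposition 6.5]{GMR}), so $K_{\theta\circ\psi}$ is well defined. The multiplicativity $U_\psi(\theta g)=(\theta\circ\psi)\,U_\psi g$ together with $U_\psi H^2=H^2$ yields $U_\psi(\theta H^2)=(\theta\circ\psi)H^2$, and then applying the unitary $U_\psi$ to the orthogonal decomposition $H^2=K_\theta\oplus\theta H^2$ gives $H^2=U_\psi(K_\theta)\oplus(\theta\circ\psi)H^2$. Thus $U_\psi(K_\theta)=K_{\theta\circ\psi}$, and dividing by $\sqrt{\psi'}$ produces the claimed formula. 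The only potentially delicate point is the unitarity of $U_\psi$, which reduces to the change-of-variables identity $\int_{\mathbb{T}}|\sqrt{\psi'}\,(f\circ\psi)|^2\,dm=\int_{\mathbb{T}}|f|^2\,dm$ on the circle; this holds because $|\psi'|$ on $\mathbb{T}$ is exactly the Jacobian of the M\"obius diffeomorphism $\psi\!:\mathbb{T}\to\mathbb{T}$.
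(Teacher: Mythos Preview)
Your argument is correct. The first two assertions are obtained exactly as in the paper, by specialising Theorem~\ref{minimal kernel} with $F=\overline\theta$; the paper records no further justification beyond the sentence ``Letting $F=\overline{\theta}$ in Theorem~\ref{minimal kernel}, some corollaries on model spaces follow.''

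For the identity $\Ker T_{(\overline\theta\circ\psi)\psi/z}=\frac{1}{\sqrt{\psi'}}K_{\theta\circ\psi}$ when $\psi$ is an automorphism, the paper gives no explicit proof at all, presumably regarding it as known (it is implicit in the proof of Theorem~\ref{psiT} in \cite{LP4}). Your route via the unitary weighted composition operator $U_\psi f=\sqrt{\psi'}\,(f\circ\psi)$ is the standard one and is carried out cleanly: unitarity comes from the change of variables on $\mathbb{T}$, the multiplicative relation $U_\psi(\theta g)=(\theta\circ\psi)U_\psi g$ gives $U_\psi(\theta H^2)=(\theta\circ\psi)H^2$, and orthogonal complements are preserved, yielding $U_\psi(K_\theta)=K_{\theta\circ\psi}$. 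Combined with $C_\psi(K_\theta)=\Ker T_{(\overline\theta\circ\psi)\psi/z}$ from Theorem~\ref{psiT}, this gives the claim. The only cosmetic point worth noting is that $\sqrt{\psi'}\in\mathcal{G}H^\infty$ for a disc automorphism, so the division in your last step is legitimate as an equality of subspaces of $H^2$.
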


Here we should recall a proposition concerning the inclusion.
\begin{prop}\label{prop FG}\cite[Proposition 2.16]{CaP3}
Let $g, h\in L^\infty\setminus\{0\}$ such that $\Ker T_g$ and $\Ker T_h$ are nontrivial. Then the following conditions are equivalent.\\
$({\rm i})$ $\Ker T_g\subseteq \Ker T_h;$\\
$({\rm ii})$ $hg^{-1}\in \overline{\mathcal{N}_+};$\\
$({\rm iii})$ there exists a maximal vector $k$ for $\Ker T_g$  such that $k\in \Ker T_h.$

If moreover $\Ker T_g$ contains a maximal vector $k$ with $k, k^{-1}\in L^\infty,$ then each of the above conditions is equivalent to\\
$({\rm iv})$ \;$k\in \Ker T_h\cap H^\infty.$
\end{prop}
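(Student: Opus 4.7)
The plan is to prove the cyclic chain $(\mathrm{i}) \Rightarrow (\mathrm{iii}) \Rightarrow (\mathrm{ii}) \Rightarrow (\mathrm{i})$, and then treat the supplementary equivalence with $(\mathrm{iv})$ separately.

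The step $(\mathrm{i}) \Rightarrow (\mathrm{iii})$ is a one-liner: Theorem~\ref{mini kernel} produces a maximal vector $k$ for $\Ker T_g$, and $(\mathrm{i})$ places it in $\Ker T_h$. For $(\mathrm{iii}) \Rightarrow (\mathrm{ii})$, I would parameterize such a $k$ via Theorem~\ref{maximal1} as $k = g^{-1}\bar z\bar p$ with $p \in H^2$ outer, and use $k \in \Ker T_h$ to write $hk = \bar z\bar q$ for some $q \in H^2$. Comparing the two expressions gives $\overline{hg^{-1}} = q/p$; since $p$ is outer and $q \in H^2$, this quotient lies in $\mathcal{N}_+$, yielding $hg^{-1} \in \overline{\mathcal{N}_+}$, i.e.\ $(\mathrm{ii})$.

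The implication $(\mathrm{ii}) \Rightarrow (\mathrm{i})$ is the computational core. For any $f \in \Ker T_g$, one has $gf \in \overline{zH^2}$, hence $\overline{gf} \in zH^2$. Then the product $\overline{hf} = \overline{hg^{-1}} \cdot \overline{gf}$ lies in $\mathcal{N}_+ \cdot zH^2 \subseteq z\mathcal{N}_+$. On the other hand $hf \in L^2$ because $h \in L^\infty$ and $f \in H^2$, so $\overline{hf}/z \in \mathcal{N}_+ \cap L^2 = H^2$ by Smirnov's theorem. Thus $\overline{hf} \in zH^2$, i.e.\ $hf \in \overline{zH^2}$, so $f \in \Ker T_h$. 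For the supplementary claim involving $(\mathrm{iv})$: clearly $(\mathrm{iv}) \Rightarrow (\mathrm{iii})$, since the specified bounded $k$ serves as the required maximal vector. Conversely, once $(\mathrm{i})$ is known, $k \in \Ker T_g \subseteq \Ker T_h$, and the boundedness hypothesis gives $k \in H^2 \cap L^\infty = H^\infty$, whence $(\mathrm{iv})$.

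The main obstacle I anticipate is keeping the Smirnov-class bookkeeping rigorous, since the preliminaries define $\mathcal{N}_+$ only through the $H^\infty/H^\infty$ representation. The two facts I rely on are that $q/p \in \mathcal{N}_+$ whenever $p \in H^2$ is outer and $q \in H^2$, and that $\mathcal{N}_+ \cap L^2 = H^2$. Both are standard consequences of canonical factorization in $\mathcal{N}_+$ (reciprocals of outer Smirnov functions remain in $\mathcal{N}_+$, and every $\mathcal{N}_+$ function with $L^2$ boundary values belongs to $H^2$), but a brief citation or one-line justification will be needed to link the $H^\infty/H^\infty$ definition used in the paper to these structural facts.
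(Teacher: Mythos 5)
The paper gives no proof of this proposition---it is quoted verbatim from \cite[Proposition 2.16]{CaP3}---so there is no internal argument to compare against; your proof is essentially the standard one and is correct. The one citation to tighten is in $(\mathrm{i})\Rightarrow(\mathrm{iii})$: Theorem~\ref{mini kernel} only shows that each nonzero $k_0\in\Ker T_g$ is maximal for the possibly strictly smaller kernel ${\rm K_{min}}(k_0)$, whereas you need a vector maximal for $\Ker T_g$ itself; that existence statement is the separate result \cite[Theorem 5.1 and Corollary 5.1]{CaP1} (used tacitly throughout the paper), so cite that instead. The two Smirnov facts you flag---that $q/p\in\mathcal{N}_+$ for $q\in H^2$ and $p\in H^2$ outer, and that $\mathcal{N}_+\cap L^2(\mathbb{T})=H^2$---are exactly what the argument needs, both follow from canonical factorization and the generalized (Smirnov) maximum principle, and with those references in place the chain $(\mathrm{i})\Rightarrow(\mathrm{iii})\Rightarrow(\mathrm{ii})\Rightarrow(\mathrm{i})$ together with the bridge $(\mathrm{i})\Rightarrow(\mathrm{iv})\Rightarrow(\mathrm{iii})$ is complete.
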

Using Corollary \ref{cor model}  and Proposition \ref{prop FG}, we present a new proof of  Theorem \ref{containing-model}, a result  also  found in \cite[Theorem 6.4]{GMR}.

\pf Since  $K_v$ is the minimal model space  containing $C_\psi(K_\theta)$,  Corollary \ref{cor model} implies that  $\Ker T_{(\overline{\theta}\circ \psi)\psi/z} \subseteq K_v=\Ker T_{\overline{v}}.$ By Proposition \ref{prop FG}, we derive that $$ \Ker T_{(\overline{\theta}\circ \psi)\psi/z} \subseteq \Ker T_{\overline{v}}\;\;\mbox{if and only if}\; \;\frac{\overline{v}z}{( \overline{\theta} \circ \psi)\psi}\in \overline{\mathcal{N}_+}.$$
This means   $$\frac{ v \psi}{z( \theta \circ \psi) }\in \mathcal{N}_+,$$ which is unimodular. By the definition of the Smirnov class, it follows that $$\frac{ v \psi}{z( \theta \circ \psi) }\in H^\infty \subseteq H^2.$$ Thus we seek the minimal inner function $\phi$ such that
$$v=z(\theta \circ \psi)\phi/\psi \in H^2.$$

For the case $\theta(0)\neq 0$ and $\psi(0)=0,$ then $z$ divides $\psi$ and so we take $\phi(z)=\psi(z)/z$, which gives $v(z)=(\theta\circ\psi)(z).$

For the case $\theta(0)\neq 0$ and $\psi(0)\neq 0,$ then we take $\phi=\psi$, which gives $v(z)=z(\theta\circ\psi)(z).$

For the case $\theta(0)= 0$, then $\psi$ cancels  and so we take $\phi=1$, which gives $v(z)=z(\theta\circ\psi)(z)/\psi(z),$ ending the proof.
\zb

The following proposition describes a surjective composition operator between Toeplitz kernels.

\begin{prop}\label{prop sur}  Let $F\in L^\infty\setminus\{0\}$ such that $\Ker T_F\neq \{0\}$ and $\psi$ be inner. Let $k$ denote a maximal vector for $\Ker T_F$ and suppose $C_\psi(\Ker T_F)$ is  also a Toeplitz kernel, then  $C_\psi(\Ker T_F)={\rm K_{min}}(k\circ \psi)$. This holds if and only if $\psi$ is an automorphism on $\mathbb{D}.$\end{prop}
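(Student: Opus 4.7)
The plan rests on assembling Theorem \ref{minimal kernel} with Theorems \ref{psiT} and \ref{thm CphiM}; no new machinery is required. First I would record the key identification coming out of the proof of Theorem \ref{minimal kernel}: $k\circ\psi$ is produced there as a maximal vector for $\Ker T_{(F\circ\psi)\psi/z}$, so
\[
{\rm K_{min}}(k\circ\psi) \;=\; \Ker T_{(F\circ\psi)\psi/z},
\]
and this space is simultaneously the minimal Toeplitz kernel containing $C_\psi(\Ker T_F)$.

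For the first assertion I would suppose that $C_\psi(\Ker T_F)$ is itself a Toeplitz kernel. Since $k\in \Ker T_F$ forces $k\circ\psi \in C_\psi(\Ker T_F)$, the definition of ${\rm K_{min}}$ as the smallest Toeplitz kernel containing the given vector yields ${\rm K_{min}}(k\circ\psi)\subseteq C_\psi(\Ker T_F)$. Combined with the reverse inclusion $C_\psi(\Ker T_F)\subseteq {\rm K_{min}}(k\circ\psi)$ from Theorem \ref{minimal kernel}, this gives the claimed equality.

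For the equivalence, the $(\Leftarrow)$ direction is immediate: when $\psi$ is an automorphism, Theorem \ref{psiT} delivers $C_\psi(\Ker T_F)=\Ker T_{(F\circ\psi)\psi/z}$, which coincides with ${\rm K_{min}}(k\circ\psi)$ by the identification above, and this is a Toeplitz kernel. The $(\Rightarrow)$ direction I would argue by contrapositive using Theorem \ref{thm CphiM}: when $\psi$ is not an automorphism and $\dim\Ker T_F\geq 2$, the image $C_\psi(\Ker T_F)$ fails to be nearly $S^*$-invariant, hence cannot be any Toeplitz kernel and in particular cannot equal ${\rm K_{min}}(k\circ\psi)$, contradicting the hypothesis under which the first part applies.

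There is no genuine obstacle here, only bookkeeping: the proposition is essentially a repackaging of Theorems \ref{minimal kernel}, \ref{psiT} and \ref{thm CphiM}. The only delicate point worth flagging is that Theorem \ref{thm CphiM} requires $\dim\Ker T_F\geq 2$; one should confirm that this is the operative regime and that the one-dimensional degenerate situation does not affect the stated equivalence.
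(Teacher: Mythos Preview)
Your argument is correct and follows essentially the same route as the paper: both proofs combine the minimality statement of Theorem~\ref{minimal kernel} (giving $C_\psi(\Ker T_F)\subseteq {\rm K_{min}}(k\circ\psi)$) with the observation that $k\circ\psi$ lies in the assumed Toeplitz kernel $C_\psi(\Ker T_F)$ (giving the reverse inclusion), and then defer the equivalence to Theorems~\ref{psiT} and~\ref{thm CphiM}. Your flag on the dimension hypothesis is well taken and in fact sharper than the paper's own treatment: the equivalence genuinely requires $\dim\Ker T_F\ge 2$, since for example $C_\psi(K_z)=C_\psi(\mathbb{C})=\mathbb{C}=K_z$ is a Toeplitz kernel for \emph{every} inner $\psi$, automorphism or not.
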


\begin{proof} Let $G\in  L^\infty$ be such that $C_\psi(\Ker T_F)=\Ker T_G.$ We have $k\circ \psi\in \Ker T_G$ and then ${\rm K_{\min}}(k\circ \psi)\subseteq \Ker T_G$. Theorem \ref{minimal kernel} implies that \[
\Ker T_G=C_\psi(\Ker T_F)\subseteq\Ker T_{(F\circ \psi) \psi /z}={\rm K_{\min}}(k\circ \psi).
\]
 Hence $\Ker T_G={\rm K_{\min}}(k\circ \psi).$ That is, $C_\psi(\Ker T_F)={\rm K_{min}}(k\circ \psi)=\Ker T_{(F\circ \psi) \psi /z}$, which is equivalent to $\psi$ being an automorphism on $\mathbb{D}$, ending the proof.
 \end{proof}

\begin{rem}\label{rem S*theta} Since $S^*\theta=(\theta-\theta(0))/z=\theta\overline{zp}$ with an outer and invertible function $p=1-\overline{\theta(0)}\theta,$ Theorem \ref{maximal1} implies
that $S^*\theta$ is a maximal vector for $K_\theta$. \end{rem}

So letting $F=\overline{\theta}$ and $k=S^*\theta$ in Proposition \ref{prop sur}, we deduce the minimal Toeplitz kernel containing  $C_\psi(S^*\theta)$.
\begin{cor}\label{cor models}  Let $\theta$  be an inner function such that $K_\theta\neq \{0\}$ and $\psi$ be an automorphism, then  $$K_{\min}(C_\psi(S^*\theta))=C_\psi(K_\theta)=\Ker T_{(\overline{\theta}\circ \psi)\psi/z}.$$\end{cor}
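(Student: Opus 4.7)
The plan is to assemble three immediately preceding ingredients: Remark \ref{rem S*theta}, Proposition \ref{prop sur}, and Corollary \ref{cor model}. The corollary is stated as a specialization of the general theory already built, so there is nothing substantively new to verify; the task is essentially bookkeeping.

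First I would set $F = \overline{\theta}$, so that $\Ker T_F = K_\theta$, and invoke Remark \ref{rem S*theta} to identify $k := S^*\theta$ as a maximal vector for $K_\theta$. Since $\psi$ is an automorphism of $\mathbb{D}$, Proposition \ref{prop sur} applies with this choice of $F$ and $k$, and yields
\[
C_\psi(K_\theta) = \mathrm{K}_{\min}(k \circ \psi) = \mathrm{K}_{\min}(C_\psi(S^*\theta)),
\]
which gives the first equality. For the second equality, Corollary \ref{cor model} (the specialization of Theorem \ref{minimal kernel} to $F=\overline{\theta}$) says that when $\psi$ is an automorphism, the inclusion $C_\psi(K_\theta) \subseteq \Ker T_{(\overline{\theta}\circ \psi)\psi/z}$ in fact becomes equality.

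The only elementary check needed along the way is to note that $C_\psi(S^*\theta) = (S^*\theta)\circ \psi$ by definition of the composition operator, so the notation in the corollary matches that of Proposition \ref{prop sur}. I do not anticipate any real obstacle, since the substantive content — the explicit symbol $(\overline{\theta}\circ\psi)\psi/z$, the identification of $S^*\theta$ as a maximal vector, and the automorphism characterization of when a composition operator preserves the Toeplitz-kernel structure — has been established in the results cited above.
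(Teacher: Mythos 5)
Your proposal is correct and follows exactly the paper's route: the paper derives this corollary by setting $F=\overline{\theta}$ and $k=S^*\theta$ in Proposition \ref{prop sur} (with Remark \ref{rem S*theta} supplying the maximal-vector identification), which is precisely what you do. The extra appeal to Corollary \ref{cor model} for the second equality is harmless but redundant, since Proposition \ref{prop sur} already delivers $C_\psi(\Ker T_F)={\rm K_{min}}(k\circ\psi)=\Ker T_{(F\circ\psi)\psi/z}$ for an automorphism $\psi$.
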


Motivated by Proposition \ref{prop FG} and Theorem \ref{minimal kernel}, we turn to characterizing the non-surjective composition operators between Toeplitz kernels.
\begin{prop}\label{prop FGPSI} Let $\psi$ be inner and $F, H\in L^\infty\setminus\{0\}$ such that   $\Ker T_F$ and $\Ker T_H$ are nontrivial. Then the following conditions are equivalent.\\
$({\rm i})$ $C_\psi(\Ker T_F)\subseteq \Ker T_H;$\\
$({\rm ii})$ $H(F^{-1}\circ \psi)z/\psi\in \overline{\mathcal{N}_+};$\\
$({\rm iii})$ there exists a maximal vector $k$ for $\Ker T_F$ such that $k\circ \psi\in \Ker T_H.$

If moreover $\Ker T_F$ contains a maximal vector $k$ with $k, k^{-1}\in L^\infty,$ then each of the above conditions is equivalent to\\
$({\rm iv})$ \;$k\circ \psi\in \Ker T_H\cap H^\infty.$
\end{prop}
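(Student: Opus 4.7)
The plan is to view this proposition as a direct corollary of Proposition \ref{prop FG} applied to the pair of symbols $g=(F\circ\psi)\psi/z$ and $h=H$, using Theorem \ref{minimal kernel} to bridge between $C_\psi(\Ker T_F)$ and the Toeplitz kernel it minimally sits inside. The hinge of the argument is the identity $C_\psi(\Ker T_F)\subseteq \Ker T_{(F\circ\psi)\psi/z}$, together with the minimality of the right-hand side.

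First I would show that condition (i) is equivalent to $\Ker T_{(F\circ\psi)\psi/z}\subseteq \Ker T_H$. One direction is automatic: the containment $C_\psi(\Ker T_F)\subseteq \Ker T_{(F\circ\psi)\psi/z}$ from Theorem \ref{minimal kernel} yields (i) whenever the latter kernel sits inside $\Ker T_H$. For the converse, if (i) holds then $\Ker T_H$ is a Toeplitz kernel containing $C_\psi(\Ker T_F)$, so by the minimality asserted in Theorem \ref{minimal kernel} it must contain $\Ker T_{(F\circ\psi)\psi/z}$.

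Once this reduction is in place, the equivalence of (i) and (ii) follows immediately from Proposition \ref{prop FG}(ii), since
\begin{equation*}
H\cdot\Big(\frac{(F\circ\psi)\psi}{z}\Big)^{-1}=H(F^{-1}\circ\psi)\frac{z}{\psi}.
\end{equation*}
For the equivalence with (iii), I would use the maximal vector correspondence in Theorem \ref{minimal kernel}: if $k$ is a maximal vector for $\Ker T_F$, then $k\circ\psi$ is a maximal vector for $\Ker T_{(F\circ\psi)\psi/z}$. Thus (iii) is exactly Proposition \ref{prop FG}(iii) for the pair $((F\circ\psi)\psi/z, H)$. For the implication (i)$\Rightarrow$(iii), one simply picks any maximal vector $k$ for $\Ker T_F$ (which exists by Theorem \ref{mini kernel}) and observes $k\circ\psi=C_\psi(k)\in C_\psi(\Ker T_F)\subseteq \Ker T_H$.

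For (iv), I would verify that the assumptions transfer from $k$ to $k\circ\psi$, so that Proposition \ref{prop FG}(iv) is applicable to $\Ker T_{(F\circ\psi)\psi/z}$. If $k,k^{-1}\in L^\infty$, then $k\in H^2\cap L^\infty=H^\infty$, and since $\psi$ is inner, $k\circ\psi\in H^\infty$ with boundary values $k\circ\psi(\zeta)=k(\psi(\zeta))$ for a.e.\ $\zeta\in\mathbb{T}$; the boundary values of $\psi$ lie in $\mathbb{T}$ a.e., so $|k\circ\psi|\geq \operatorname{ess\,inf}|k|>0$ a.e., giving $(k\circ\psi)^{-1}\in L^\infty$. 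The main obstacle I anticipate is merely being careful that the maximal vector $k\circ\psi$ satisfies the hypotheses of Proposition \ref{prop FG}(iv); once this $L^\infty$ bookkeeping is done, the result drops out directly.
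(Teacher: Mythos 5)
Your proposal is correct and follows essentially the same route as the paper: reduce (i) to the inclusion $\Ker T_{(F\circ\psi)\psi/z}\subseteq \Ker T_H$ via the minimality in Theorem \ref{minimal kernel}, then apply Proposition \ref{prop FG} with $g=(F\circ\psi)\psi/z$ and $h=H$. Your version merely spells out details the paper leaves implicit, namely the maximal-vector correspondence $k\mapsto k\circ\psi$ needed for (iii) and the check that $k,k^{-1}\in L^\infty$ transfers to $k\circ\psi$ for (iv).
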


\begin{proof} Theorem \ref{minimal kernel} implies $C_\psi(\Ker T_F) \subseteq \Ker T_{(F\circ \psi)\psi/z},$ which is the minimal Toeplitz kernel containing $C_\psi(\Ker T_F)$. So $C_\psi(\Ker T_F) \subseteq \Ker T_H$ is equivalent to $\Ker T_{(F\circ \psi)\psi/z}\subseteq \Ker T_H.$ Applying Proposition \ref{prop FG} with $g=(F\circ \psi)\psi/z$ and $h=H$, we obtain the required conclusions.
\end{proof}
We note if $h\in L^\infty\cap \overline{\mathcal{N}_+}$, then  $h\in \overline{H^\infty},$ so the following corollary holds.
\begin{cor} With the same assumptions as in Proposition \ref{prop FGPSI}, if $H(F^{-1}\circ \psi)z/\psi\in L^\infty$, then
$C_\psi(\Ker T_F)\subseteq \Ker T_H$ if and only if $H(F^{-1}\circ \psi)z/\psi\in \overline{H^\infty}.$
\end{cor}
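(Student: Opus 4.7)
The plan is to deduce this as an immediate consequence of Proposition \ref{prop FGPSI} combined with a classical intersection identity from Hardy space theory. By the equivalence of (i) and (ii) in Proposition \ref{prop FGPSI}, the inclusion $C_\psi(\Ker T_F)\subseteq \Ker T_H$ is equivalent to $H(F^{-1}\circ \psi)z/\psi\in \overline{\mathcal{N}_+}$. Thus, under the extra standing hypothesis $H(F^{-1}\circ \psi)z/\psi\in L^\infty$, the claim reduces to establishing the set-theoretic identity $L^\infty\cap \overline{\mathcal{N}_+}=\overline{H^\infty}$, applied to this particular function.

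The inclusion $\overline{H^\infty}\subseteq L^\infty\cap \overline{\mathcal{N}_+}$ is trivial since $H^\infty\subseteq \mathcal{N}_+$ by definition. For the nontrivial inclusion, I would take $h\in L^\infty\cap \overline{\mathcal{N}_+}$ and write $\overline{h}=f_1/f_2$ with $f_1, f_2\in H^\infty$ and $f_2$ outer. The equality $f_1=\overline{h}\,f_2$ on $\mathbb{T}$ together with $\overline{h}\in L^\infty$ and $f_2$ outer forces $\overline{h}\in H^\infty$ by Smirnov's theorem (a Smirnov-class function that is essentially bounded on $\mathbb{T}$ lies in $H^\infty$). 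Therefore $h\in \overline{H^\infty}$, as required. This step is the standard fact $\mathcal{N}_+\cap L^\infty=H^\infty$ flagged in the remark preceding the corollary.

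Combining the two ingredients gives the proof: $C_\psi(\Ker T_F)\subseteq \Ker T_H$ is equivalent to $H(F^{-1}\circ \psi)z/\psi\in \overline{\mathcal{N}_+}$, and under the $L^\infty$ hypothesis this is in turn equivalent to $H(F^{-1}\circ \psi)z/\psi\in \overline{H^\infty}$. There is no real obstacle here; the corollary is essentially a one-line consequence of Proposition \ref{prop FGPSI} once the classical identity $L^\infty\cap \overline{\mathcal{N}_+}=\overline{H^\infty}$ is invoked.
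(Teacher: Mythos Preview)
Your proposal is correct and follows exactly the paper's approach: the paper remarks just before the corollary that $h\in L^\infty\cap \overline{\mathcal{N}_+}$ implies $h\in \overline{H^\infty}$, and then states the corollary as an immediate consequence of Proposition~\ref{prop FGPSI}. Your write-up is in fact slightly more detailed, since you spell out the Smirnov-theorem argument for the identity $L^\infty\cap \mathcal{N}_+=H^\infty$ that the paper simply asserts.
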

Letting $H=F$ in Proposition \ref{prop FGPSI}, there follows a corollary.
\begin{cor} Let $\psi$ be inner and $F\in L^\infty\setminus \{0\}$ such that  $\Ker T_F$ is nontrivial. If $F (F^{-1}\circ \psi) z/\psi\in \overline{\mathcal{N}_+}$ or $k\circ \psi\in \Ker T_F$ for a maximal vector $k\in \Ker T_F$, then $C_\psi(\Ker T_F)\subseteq \Ker T_F.$\end{cor}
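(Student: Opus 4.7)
The plan is to derive this corollary as a direct application of Proposition \ref{prop FGPSI} under the substitution $H = F$. The standing hypothesis of that proposition — that $\Ker T_H$ is nontrivial — reduces to the assumption $\Ker T_F \neq \{0\}$, which is supplied. Under the substitution, condition (ii) of the proposition, $H(F^{-1}\circ\psi)z/\psi \in \overline{\mathcal{N}_+}$, becomes precisely $F(F^{-1}\circ\psi)z/\psi \in \overline{\mathcal{N}_+}$, matching the first hypothesis of the corollary verbatim; likewise, condition (iii), which asks for a maximal vector $k \in \Ker T_F$ with $k\circ\psi \in \Ker T_H$, becomes $k\circ\psi \in \Ker T_F$, matching the second hypothesis.

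Since Proposition \ref{prop FGPSI} establishes $(\mathrm{i})\Leftrightarrow(\mathrm{ii})\Leftrightarrow(\mathrm{iii})$, either of the two hypotheses immediately yields condition (i), namely $C_\psi(\Ker T_F)\subseteq \Ker T_F$. There is no real obstacle: the content is entirely carried by the preceding proposition, and the only work is the bookkeeping of the specialization $H=F$. I would write the proof as a two-line deduction, quoting Proposition \ref{prop FGPSI} with $H=F$ and reading off the equivalence of the hypotheses with condition (i). Note that we do not need to invoke condition (iv) of the proposition, since that would require the extra assumption $k,k^{-1}\in L^\infty$, which is not stipulated here.
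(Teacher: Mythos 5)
Your proposal is correct and coincides with the paper's own (implicit) argument: the paper simply states that the corollary follows by letting $H=F$ in Proposition \ref{prop FGPSI}, which is exactly the specialization you carry out. Your additional remark that condition (iv) is not needed is accurate but not essential.
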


Now choosing $k=S^*\theta$ with $k, \;k^{-1} \in L^\infty,$  Proposition \ref{prop FGPSI} implies the corollary below.

\begin{cor}\label{cor ktheta} Let $\psi, \theta$ be inner and $H\in L^\infty\setminus\{0\}$ such that $K_\theta$ and $\Ker T_H$ are nontrivial. Then the following conditions are equivalent.\\
$({\rm i})$ $C_\psi(K_\theta)\subseteq \Ker T_H;$\\
$({\rm ii})$ $H(\theta\circ \psi)z/\psi\in \overline{\mathcal{N}_+};$\\
$({\rm iii})$    $C_\psi(S^* \theta)\in \Ker T_H\cap H^\infty.$
\end{cor}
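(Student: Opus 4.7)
The plan is to obtain this result as a direct specialization of Proposition \ref{prop FGPSI} by setting $F=\overline{\theta}$ and $k=S^*\theta$. Under this substitution, $\Ker T_F=\Ker T_{\overline{\theta}}=K_\theta$, so condition (i) of the proposition translates verbatim into condition (i) of the corollary.

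For condition (ii), I would note that since $|\theta|=1$ almost everywhere on $\mathbb{T}$, we have $F^{-1}=\theta$ on $\mathbb{T}$, so $H(F^{-1}\circ \psi)z/\psi$ becomes $H(\theta\circ \psi)z/\psi$, matching (ii) of the corollary.

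The main point is to verify that Proposition \ref{prop FGPSI}(iv) applies with this choice of $k$, which requires $k,k^{-1}\in L^\infty$. By Remark \ref{rem S*theta}, $k=S^*\theta$ is a maximal vector for $K_\theta$, and it can be written as $\theta\overline{zp}$ with $p=1-\overline{\theta(0)}\theta$. Hence $|k|=|p|=|1-\overline{\theta(0)}\theta|$ on $\mathbb{T}$. Since $K_\theta\neq\{0\}$ forces $\theta$ to be non-constant, we have $|\theta(0)|<1$, whence
\[
0<1-|\theta(0)|\le |k|\le 1+|\theta(0)|<2\quad\text{on }\mathbb{T},
\]
so both $k$ and $k^{-1}$ belong to $L^\infty$. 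Applying Proposition \ref{prop FGPSI}(iv) with this $k$, condition (iv) becomes $k\circ\psi=C_\psi(S^*\theta)\in \Ker T_H\cap H^\infty$, which is exactly condition (iii) of the corollary.

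No substantive obstacle is expected: once the identifications $F=\overline{\theta}$ and $k=S^*\theta$ are made, everything reduces to Proposition \ref{prop FGPSI}. The only non-mechanical step is the verification that $|S^*\theta|$ is bounded above and below on $\mathbb{T}$, which follows cleanly from the factorization in Remark \ref{rem S*theta} together with the assumption $K_\theta\neq\{0\}$.
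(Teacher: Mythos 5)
Your proposal is correct and follows exactly the paper's route: the paper derives this corollary in one line by applying Proposition \ref{prop FGPSI} with $F=\overline{\theta}$ and $k=S^*\theta$, using Remark \ref{rem S*theta} to see that $S^*\theta=\theta\overline{zp}$ with $p=1-\overline{\theta(0)}\theta$ invertible, so that $k,k^{-1}\in L^\infty$ and condition (iv) of the proposition becomes condition (iii) of the corollary. Your explicit verification that $0<1-|\theta(0)|\le|S^*\theta|\le 1+|\theta(0)|$ on $\mathbb{T}$ is just a spelled-out version of the same observation.
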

\begin{rem}The function $\frac{\theta(z)-\theta(w)}{z-w}$ is also a maximal vector for $K_\theta$ for every $w\in \mathbb{D}.$ So $C_\psi(K_\theta)\subseteq \Ker T_H$ if and only if $\frac{\theta\circ \psi(z)-\theta(w)}{\psi(z)-w}\in \Ker T_H$ for some $w\in \mathbb{D}.$  \end{rem}

\section{Minimal Toeplitz kernel containing $C_\psi(u\Ker T_F)$ and $uC_\psi(\Ker T_F)$}
In this section,  we examine the action of both the multiplier and composition operator on Toeplitz kernels. The explicit formulas for the minimal Toeplitz kernels containing $C_\psi(u\Ker T_F)$ and $uC_\psi(\Ker T_F)$ are  provided in Theorems \ref{thm uTg1} and \ref{thm uTg2}, which indicate two extensions of  Theorem \ref{containing-model} found in Theorems \ref{thm:3.8} and \ref{thm uCpsi}.
These results are based on the following theorem  by C\^{a}mara and Partington on the minimal Toeplitz kernel containing $u\Ker T_g$.

\begin{thm} \cite[Theorem 5.1]{CaP5}\label{thm uTg} Let $g\in L^\infty\setminus\{0\}$ and $u\in H^2$ such that $\Ker T_g\neq \{0\}$ and $u\Ker T_g\subseteq H^2.$ Then there exists a minimal Toeplitz kernel containing $u\Ker T_g$, which is $\Ker T_{g\overline{u}/u_o},$ where $u_o$ is the outer factor of $u.$ Moreover, if $\phi_m$ is a maximal function for $\Ker T_g,$ then $u\phi_m$ is a maximal function for $\Ker T_{g \overline{u}/u_o}.$\end{thm}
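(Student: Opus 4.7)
The plan is to reduce everything to the inner-outer factorization $u = u_i u_o$ and then verify three ingredients in sequence: the symbol $G := g\bar{u}/u_o$ lies in $L^\infty$, the inclusion $u\Ker T_g \subseteq \Ker T_G$ holds, and some element of $u\Ker T_g$ is a maximal vector for $\Ker T_G$. The last point will promote $\Ker T_G$ to the minimal Toeplitz kernel containing $u\Ker T_g$ via the definition of ${\rm K_{min}}$.

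Boundedness of the symbol should be immediate once one notes that, on $\mathbb{T}$, $|\bar{u}/u_o| = |\bar{u_i}\bar{u_o}/u_o| = 1$ (since $|u_i|\equiv 1$ and $|\bar{u_o}/u_o|\equiv 1$), so $|G| = |g|$ almost everywhere. For the inclusion, I would pick $f \in \Ker T_g$; then $uf \in H^2$ by hypothesis, and on $\mathbb{T}$,
\[
G\cdot uf \;=\; \frac{g\bar{u}u}{u_o}\,f \;=\; \frac{g|u|^2}{u_o}\,f \;=\; g\bar{u_o}\,f,
\]
using $|u|^2 = |u_o|^2 = u_o\bar{u_o}$. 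Writing $gf = \bar{z}\bar{h}$ with $h\in H^2$ (from $f\in \Ker T_g$), the right-hand side equals $\bar{z}\,\overline{u_o h}$ with $u_o h \in H^1$; hence $G\cdot uf \in \overline{zH^1}\cap L^2 = \overline{zH^2}$, so $P_{H^2}(G\cdot uf) = 0$ and $uf \in \Ker T_G$.

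For the maximal-vector and minimality claims, I would take a maximal vector $\phi_m$ for $\Ker T_g$ and invoke Theorem \ref{maximal1} to write $\phi_m = g^{-1}\bar{z}\bar{p}$ with outer $p\in H^2$. The natural candidate is $q := p u_o$, and a short computation (using $|u_o|^2/\bar{u} = u$ on $\mathbb{T}$) gives $G^{-1}\bar{z}\bar{q} = u\phi_m$. Now $q$ is outer as a product of two outer $H^2$ functions, while the identity $|q| = |G||u\phi_m| = |g||u\phi_m|$, combined with $g\in L^\infty$ and $u\phi_m\in H^2$, forces $q\in L^2$ and hence $q\in H^2$. Theorem \ref{maximal1} then certifies $u\phi_m$ as a maximal vector for $\Ker T_G$, so any Toeplitz kernel containing $u\Ker T_g$ must contain $u\phi_m$ and therefore also ${\rm K_{min}}(u\phi_m) = \Ker T_G$, proving minimality. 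The main obstacle I anticipate is spotting the correct candidate $q = pu_o$ and confirming $q\in H^2$ (not merely in $H^1$); the latter depends crucially on the standing hypothesis $u\Ker T_g \subseteq H^2$ applied to $\phi_m$, funnelled through the modulus identity $|q| = |g||u\phi_m|$.
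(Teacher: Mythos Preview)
The paper does not provide its own proof of this theorem: it is quoted verbatim from \cite[Theorem 5.1]{CaP5} and used as a black box (e.g., in the proofs of Theorems \ref{thm uTg1} and \ref{thm uTg2}). So there is no in-paper argument to compare against.

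That said, your proposal is correct. The three steps are sound: the modulus identity $|\bar u/u_o|=1$ gives $G\in L^\infty$; the computation $G\cdot(uf)=\bar u_o(gf)=\bar z\,\overline{u_o h}$ together with $\overline{zH^1}\cap L^2=\overline{zH^2}$ yields the inclusion $u\Ker T_g\subseteq\Ker T_G$; and the choice $q=pu_o$ with $|q|=|g|\,|u\phi_m|\in L^2$ places $q$ in $H^2$ outer, so Theorem \ref{maximal1} applies and $u\phi_m$ is maximal for $\Ker T_G$, giving minimality. Your strategy---identify a maximal vector directly via Theorem \ref{maximal1} rather than computing ${\rm K_{\min}}(u\phi_m)$ through the inner--outer factorization and then matching symbols via Theorem \ref{equalkernel}---is slightly more streamlined than the template the paper uses for the analogous Theorems \ref{minimal kernel}, \ref{thm uTg1}, and \ref{thm uTg2}, but the underlying idea is the same.
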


It is natural to ask when $u\in H(\mathbb{D})$ acts as a surjective multiplier between two Toeplitz kernels, which is addressed below.

\begin{thm}\cite[Theorem 3.2]{CaP3}\label{wequalkernel}
Let $g, h\in L^\infty\setminus\{0\}$ such that $\Ker T_g$ and $\Ker T_h$ are nontrivial. Then a function $u\in H(\mathbb{D})$ satisfies
$u\Ker T_g=\Ker T_h$ if and only if \\
$(i)$ $u\in \mathcal{C}(\Ker T_g)$ and $u^{-1}\in \mathcal{C}(\Ker T_h)$;\\
$(ii)$ for some (or indeed, for every) maximal vector $k\in \Ker T_g,$ the function $uk$ is a maximal vector for $\Ker T_h.$
 \end{thm}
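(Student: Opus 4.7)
The plan is to combine the description of the minimal Toeplitz kernel containing $u\Ker T_g$ (Theorem \ref{thm uTg}) with the characterisation of maximal vectors (Theorem \ref{maximal1}), and to use the classical identity $H^2 = L^2\cap\mathcal{N}_+$ at the one delicate step.

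\emph{Necessity of (i) and (ii).} Assume $u\Ker T_g = \Ker T_h$. Condition (i) is essentially built into the hypothesis: $u$ maps $\Ker T_g$ into $\Ker T_h\subseteq H^2$ and $u^{-1}$ maps $\Ker T_h$ into $\Ker T_g\subseteq H^2$. For (ii), note that $\Ker T_h$ is itself a Toeplitz kernel containing $u\Ker T_g$, so Theorem \ref{thm uTg} forces the sandwich
\[
u\Ker T_g \;\subseteq\; \Ker T_{g\overline{u}/u_o} \;\subseteq\; \Ker T_h \;=\; u\Ker T_g,
\]
yielding $\Ker T_h=\Ker T_{g\overline{u}/u_o}$. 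The maximal-vector clause of Theorem \ref{thm uTg} then says that $uk$ is maximal for $\Ker T_h$ whenever $k$ is maximal for $\Ker T_g$, giving the strong ``every'' form of (ii) for free.

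\emph{Sufficiency.} Assume (i) and (ii), and fix a maximal vector $k\in\Ker T_g$. By (i), $u\Ker T_g\subseteq H^2$, so Theorem \ref{thm uTg} produces $\Ker T_{g\overline{u}/u_o}$ as the minimal Toeplitz kernel containing $u\Ker T_g$, with $uk$ a maximal vector there. Hypothesis (ii) says that $uk$ is also maximal for $\Ker T_h$; since the minimal Toeplitz kernel containing a nonzero vector is uniquely determined (Theorem \ref{mini kernel}), we deduce $\Ker T_h=\Ker T_{g\overline{u}/u_o}$, and in particular $u\Ker T_g\subseteq\Ker T_h$. For the reverse inclusion, take $f\in\Ker T_h$; by the second part of (i), $f/u\in H^2$. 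Theorem \ref{maximal1} lets us write $k=g^{-1}\overline{z}\overline{p}$ and $uk=h^{-1}\overline{z}\overline{q}$ with $p,q$ outer in $H^2$, which rearranges into the algebraic identity $g/u = h\overline{p}/\overline{q}$. Since $f\in\Ker T_h$ gives $hf\in\overline{zH^2}$, one computes
\[
g\cdot (f/u) \;=\; hf\cdot \overline{p}/\overline{q} \;\in\; \overline{zH^2}\cdot\overline{\mathcal{N}_+} \;\subseteq\; \overline{z\mathcal{N}_+}.
\]
Combined with $g(f/u)\in L^2$ (as $g\in L^\infty$ and $f/u\in H^2$) and the identity $L^2\cap\overline{z\mathcal{N}_+}=\overline{zH^2}$, this gives $T_g(f/u)=0$, hence $f/u\in\Ker T_g$ and $f\in u\Ker T_g$.

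\emph{Main obstacle.} The one delicate point is the final inclusion in sufficiency. The pointwise factorisation of $u$ furnished by Theorem \ref{maximal1} is only an identity in the Smirnov class, so upgrading $g(f/u)\in\overline{z\mathcal{N}_+}$ into membership in $\overline{zH^2}$ genuinely needs both halves of (i): the first half $u\in\mathcal{C}(\Ker T_g)$ is what makes the chain $\Ker T_{g\overline{u}/u_o}=\Ker T_h$ even speakable via Theorem \ref{thm uTg}, while the second half $u^{-1}\in\mathcal{C}(\Ker T_h)$ pulls $f/u$ back into $H^2$ and thus licences the use of $H^2=L^2\cap\mathcal{N}_+$. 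Without this $H^2$-control one could still prove $f/u\in\mathcal{N}_+$, but the equation $T_g(f/u)=0$ would be meaningless at the Hilbert-space level.
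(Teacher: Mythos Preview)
This theorem is not proved in the present paper: it is quoted from \cite[Theorem~3.2]{CaP3} and used as a black box, so there is no in-paper argument to compare your attempt against.

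As for your proof itself, there is a real gap. You lean on Theorem~\ref{thm uTg}, but that result carries the hypotheses $u\in H^2$ and $u\Ker T_g\subseteq H^2$, whereas Theorem~\ref{wequalkernel} only posits $u\in H(\mathbb{D})$, and the paper's definition of $\mathcal C(\Ker T_g)$ yields $u\Ker T_g\subseteq L^2(\mathbb{T})$, not $H^2$; the paper explicitly flags this right after the statement (``a multiplier between two general Toeplitz kernels may not lie in $H^2$''). Consequently the sandwich $u\Ker T_g\subseteq\Ker T_{g\overline u/u_o}\subseteq\Ker T_h$ in your necessity step, and the lines ``By (i), $u\Ker T_g\subseteq H^2$'' and ``by the second part of (i), $f/u\in H^2$'' in your sufficiency step, are unjustified as written. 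This is not cosmetic: take $u=1/S$ with $S$ a non-constant singular inner function, $\Ker T_g=K_{zS}$ and $\Ker T_h=K_z$. Then $k=S$ is maximal for $K_{zS}$ (indeed $S=S^*(zS)$, cf.\ Remark~\ref{rem S*theta}), $uk=1$ is maximal for $K_z$, and $|u|=|u^{-1}|=1$ a.e.\ on $\mathbb{T}$, so (i) and the ``some'' reading of (ii) are satisfied; yet $1\in K_{zS}$ while $u\cdot 1=\overline S\notin H^2$, so your chain collapses exactly at the $L^2\to H^2$ passage. Any correct argument must bridge this gap by other means, and the parenthetical ``(or indeed, for every)'' in (ii) is likely doing more work than your treatment allows.
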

Here $u\in \mathcal{C}(\Ker T_g)$ means that $u\Ker T_g\subseteq L^2(\mathbb{T}).$  Particularly, $u$ multiplies a model space $K_\theta$ into $H^2$ if and only if $|u|^2 dm$ is a Carleson measure for $K_\theta$. Since $1-\overline{\theta(0)}\theta\in K_\theta \cap H^\infty$ is invertible, it follows that $u\in H^2.$ However, a multiplier between two general Toeplitz kernels may not lie in $H^2$ (see Example \ref{exm k} with $m=2$).

\subsection{The conditions for $u\Ker T_F =\Ker T_{F\overline{u}/u_o}$} In the following proposition, we use Theorem \ref{wequalkernel} to provide an equivalent characterization of when $u\Ker T_F$ is a Toeplitz kernel.
\begin{prop}\label{prop equalFu} Let $F\in L^\infty\setminus\{0\}$ and $u\in H^2$ be outer such that $\Ker T_F\neq \{0\}$ and $u\Ker T_F\subseteq H^2.$ Then  $u\Ker T_F=\Ker T_{F\overline{u}/u}$ if and only if $u\in \mathcal{C}(\Ker T_F)$ and $u^{-1}\in \mathcal{C}(\Ker T_{F\overline{u}/u}).$\end{prop}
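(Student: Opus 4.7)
The plan is to deduce Proposition \ref{prop equalFu} by combining Theorem \ref{thm uTg} (which gives the minimal Toeplitz kernel containing $u\Ker T_F$ together with a canonical maximal vector) with Theorem \ref{wequalkernel} (which gives a two-condition characterisation of when a multiplier sends one Toeplitz kernel onto another). The key simplifying input is that $u$ is assumed outer, so the outer factor $u_o$ appearing in Theorem \ref{thm uTg} coincides with $u$, and hence $F\overline{u}/u_o = F\overline{u}/u$.

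The forward direction is immediate: if $u\Ker T_F = \Ker T_{F\overline{u}/u}$, then applying Theorem \ref{wequalkernel} with $g=F$ and $h = F\overline{u}/u$ yields condition (i) of that theorem, which is exactly the pair of Carleson-type conditions $u\in \mathcal{C}(\Ker T_F)$ and $u^{-1}\in \mathcal{C}(\Ker T_{F\overline{u}/u})$.

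For the reverse direction I would proceed as follows. Let $k$ be any maximal vector for $\Ker T_F$. Since $u$ is outer, Theorem \ref{thm uTg} tells us two things simultaneously: first, $\Ker T_{F\overline{u}/u}$ is the minimal Toeplitz kernel containing $u\Ker T_F$, so in particular $u\Ker T_F \subseteq \Ker T_{F\overline{u}/u}$; second, $uk$ is a maximal vector for $\Ker T_{F\overline{u}/u}$. Thus condition (ii) of Theorem \ref{wequalkernel} is automatic for every maximal vector of $\Ker T_F$. The assumed Carleson conditions provide condition (i) of Theorem \ref{wequalkernel}. Applying that theorem with $g=F$ and $h=F\overline{u}/u$ then yields the equality $u\Ker T_F = \Ker T_{F\overline{u}/u}$.

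I do not anticipate a real obstacle here: the whole argument is essentially a bookkeeping exercise matching the hypotheses of Theorems \ref{thm uTg} and \ref{wequalkernel}. The only subtle point to verify carefully is that $u$ being outer forces $u_o=u$, so that the two formulas for the candidate Toeplitz kernel agree, and that $u\Ker T_F\subseteq H^2$ (already assumed) legitimises the application of Theorem \ref{thm uTg}.
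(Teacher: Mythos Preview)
Your proposal is correct and matches the paper's intended approach: the paper states the proposition without proof, prefacing it only with the remark that it follows from Theorem~\ref{wequalkernel}. Your observation that condition~(ii) of Theorem~\ref{wequalkernel} is automatic (via Theorem~\ref{thm uTg}, using that $u$ outer forces $u_o=u$) is exactly the missing ingredient that reduces the biconditional of Theorem~\ref{wequalkernel} to the single Carleson-type condition~(i).
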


It is known that $u\Ker T_F$ is always closed if $u^{-1}\in L^\infty$.  In general, $u\Ker T_F$ is not always closed,  even when $u$ is outer: see the example $(1+z)K_{z\phi^\delta}$ in \cite[Proposition 3.2]{LP3} with  the singular  inner function  $\phi^\delta(z)=\exp(-\delta(1+z)/(1-z))$ with atom at $1$ and $\delta>0$. Since $u\Ker T_F$ cannot be a Toeplitz kernel when $u$ has an inner factor, we formulate a more strict condition on $u$ such that $u\Ker T_F$ is a Toeplitz kernel.

\begin{prop}\label{prop equal} Let $F\in L^\infty\setminus\{0\}$ and $u\in H^2$ be outer such that $\Ker T_F\neq \{0\}$ and $u\Ker T_F\subseteq H^2.$ If  $u^{-1}\in H^\infty,$ it holds that
\begin{eqnarray}u\Ker T_F=\Ker T_{F\overline{u}/u}=\Ker T_{F/u}.\label{equal}\end{eqnarray}\end{prop}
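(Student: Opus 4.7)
The plan is to establish the two equalities separately, leveraging the fact that the hypothesis $u^{-1}\in H^\infty$ is considerably stronger than $u$ being outer: it guarantees $F/u=Fu^{-1}\in L^\infty$ (so $\Ker T_{F/u}$ is genuinely a Toeplitz kernel) and it turns multiplication by $u^{-1}$ into a bounded map of $H^2$ into itself.

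First I would establish $u\Ker T_F=\Ker T_{F/u}$ by a direct double inclusion. For $\subseteq$, given $f\in \Ker T_F$ we have $uf\in H^2$ by assumption, and
\[
T_{F/u}(uf)=P_{H^2}\bigl((F/u)(uf)\bigr)=P_{H^2}(Ff)=T_F f=0.
\]
For $\supseteq$, given $g\in \Ker T_{F/u}$ set $f:=u^{-1}g$; since $u^{-1}\in H^\infty$ and $g\in H^2$, we get $f\in H^2$, and $T_F f=P_{H^2}(Fu^{-1}g)=T_{F/u}g=0$, so $g=uf\in u\Ker T_F$.

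For the second equality $\Ker T_{F/u}=\Ker T_{F\overline{u}/u}$, I would invoke Theorem \ref{equalkernel} with $g=F/u$ and $h=F\overline{u}/u$. The ratio is $g/h=1/\overline{u}=\overline{p}/\overline{q}$ with $p=1$ and $q=u$, both outer in $H^2$, which is exactly the criterion of the theorem. As an alternative route, Theorem \ref{thm uTg} already identifies $\Ker T_{F\overline{u}/u}$ (note that $u_o=u$) as the minimal Toeplitz kernel containing $u\Ker T_F$, so the first step, which exhibits $u\Ker T_F$ as itself a Toeplitz kernel, forces the equality.

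There is no serious obstacle here; the strength of the hypothesis $u^{-1}\in H^\infty$ over the weaker outer condition treated in Proposition \ref{prop equalFu} is precisely what makes the direct set-theoretic inclusions work automatically, bypassing any need to verify Carleson-measure or multiplier conditions. The only small point to check along the way is that $F/u\in L^\infty$, which is immediate from $F\in L^\infty$ and $u^{-1}\in H^\infty$.
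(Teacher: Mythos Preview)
Your proposal is correct and follows essentially the same approach as the paper. The only cosmetic difference is that you verify the inclusion $u\Ker T_F\subseteq \Ker T_{F/u}$ by a one-line direct computation, whereas the paper cites Theorem~\ref{thm uTg} for $u\Ker T_F\subseteq \Ker T_{F\overline{u}/u}$; the reverse inclusion via $u^{-1}\in H^\infty$ and the identification of the two kernels via Theorem~\ref{equalkernel} are handled identically.
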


\begin{proof}First, Theorem \ref{thm uTg} implies $u\Ker T_F\subseteq \Ker T_{F\overline{u}/u}=\Ker T_{F/u},$ where the equality follows from the fact $u^{-1}\in H^\infty$  and Theorem \ref{equalkernel}. For the converse inclusion, suppose $f\in\Ker T_{F\overline{u}/u}=\Ker T_{F/u}$, then it holds that
\begin{align*}F\left(\frac{f}{u}\right)=\left(\frac{F}{u}\right)f \in \overline{zH^2},\end{align*} which implies $f/u\in \Ker T_F$ due to the fact  $u^{-1}\in H^\infty$ (which is the   multiplier algebra of $H^2$).  And then $f\in u\Ker T_F.$ In sum, \eqref{equal} holds.
  \end{proof}

As a slight generalization of the example in \cite[page 560]{CaP3}, we provide an outer function $u$ satisfying Proposition \ref{prop equalFu}, but $u^{-1}\notin H^\infty$ such that $u\Ker T_F$ is a Toeplitz kernel.

\begin{exm} \label{exm k}Let $u(z)=(z-1)^{\frac{1}{m}}$ ($m\geq 2$) be the principal branch and $F(z)=z^{-\frac{m+1}{m}}$ with $\arg z\in [0,2\pi)$ on $\mathbb{T}$, then it holds that
$$u\Ker T_F=\Ker T_{z^{-\frac{m+1}{m}}\left(\frac{\bar{z}-1}{z-1}\right)^{\frac{1}{m}}}=K_{z}.$$
  \end{exm}

\subsection{The minimal Toeplitz kernel containing $C_\psi(u\Ker T_F)$} Based on the condition $u\Ker T_F\subseteq H^2,$ we formulate the minimal Toeplitz kernel containing $C_\psi(u\Ker T_F)$ in the theorem below.

\begin{thm} \label{thm uTg1}  Let $F\in L^\infty\setminus\{0\}$ and $u\in H^2$ such that $\Ker T_F\neq \{0\}$ and $u\Ker T_F\subseteq H^2.$ For any inner function $\psi,$  there exists a minimal Toeplitz kernel $\Ker T_{H}$ containing $C_\psi(u\Ker T_F)$, where \begin{eqnarray}H=\psi (F\circ\psi) \frac{ \overline{u}\circ \psi} {z (u_o\circ\psi) }\label{hpsi}\end{eqnarray}and $u_o$ is the outer factor of $u.$ Moreover, if $\phi_m$ is a maximal vector for $\Ker T_F,$ then $C_\psi(u\phi_m)$ is a maximal vector for $\Ker T_H.$\end{thm}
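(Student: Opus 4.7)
The strategy is to factor the operation into two steps and chain together the two earlier minimal-kernel results: first absorb the multiplier $u$ using Theorem \ref{thm uTg}, and then apply Theorem \ref{minimal kernel} to the composition with $\psi$. Set $G = F\overline{u}/u_o$. Because $u$ and its outer factor $u_o$ share the same modulus almost everywhere on $\mathbb{T}$, the function $\overline{u}/u_o$ is unimodular on $\mathbb{T}$, so $G \in L^\infty$ with $\|G\|_\infty = \|F\|_\infty$. Theorem \ref{thm uTg} then tells us that $\Ker T_G$ is the minimal Toeplitz kernel containing $u\Ker T_F$, and that $u\phi_m$ is a maximal vector for $\Ker T_G$ whenever $\phi_m$ is a maximal vector for $\Ker T_F$. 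In particular $\Ker T_G$ is nontrivial, since $0 \neq u\phi_m \in u\Ker T_F \subseteq \Ker T_G$.

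From $u\Ker T_F \subseteq \Ker T_G$ it follows that $C_\psi(u\Ker T_F) \subseteq C_\psi(\Ker T_G)$. Applying Theorem \ref{minimal kernel} to $\Ker T_G$ then identifies $\Ker T_{(G\circ\psi)\psi/z}$ as the minimal Toeplitz kernel containing $C_\psi(\Ker T_G)$, and it gives $C_\psi(u\phi_m) = (u\phi_m)\circ\psi$ as a maximal vector. A direct substitution produces
\[
(G\circ\psi)\frac{\psi}{z} \;=\; \psi(F\circ\psi)\frac{\overline{u}\circ\psi}{z(u_o\circ\psi)} \;=\; H,
\]
so $C_\psi(u\Ker T_F) \subseteq \Ker T_H$ and $C_\psi(u\phi_m)$ is a maximal vector for $\Ker T_H$.

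For minimality, let $g \in L^\infty\setminus\{0\}$ be any symbol with $C_\psi(u\Ker T_F) \subseteq \Ker T_g$. Since $\phi_m \in \Ker T_F$, we have $C_\psi(u\phi_m) \in \Ker T_g$, and therefore ${\rm K_{min}}(C_\psi(u\phi_m)) \subseteq \Ker T_g$. But the maximal-vector property just obtained gives ${\rm K_{min}}(C_\psi(u\phi_m)) = \Ker T_H$, so $\Ker T_H \subseteq \Ker T_g$. This simultaneously establishes the minimality of $\Ker T_H$ and confirms the description of the maximal vector.

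The main obstacle is really only bookkeeping: one must verify that the hypotheses of Theorems \ref{thm uTg} and \ref{minimal kernel} are preserved after each step (that $G\in L^\infty$, that $\Ker T_G\ne\{0\}$, and that $u\phi_m \in H^2$, the last of which is granted by $u\Ker T_F \subseteq H^2$). Once this is checked, the conclusion follows by pure composition of the two existing theorems, with no new analytic input required.
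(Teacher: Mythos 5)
Your proof is correct and follows essentially the same route as the paper: both arguments first pass from $u\Ker T_F$ to its minimal kernel $\Ker T_{F\overline{u}/u_o}$ via Theorem \ref{thm uTg} and then apply Theorem \ref{minimal kernel} to that kernel, noting $(G\circ\psi)\psi/z=H$. The only difference is cosmetic: the paper re-derives ${\rm K_{min}}(C_\psi(u\phi_m))=\Ker T_H$ by an explicit computation with Theorems \ref{mini kernel} and \ref{equalkernel}, whereas you obtain it directly from the maximal-vector clause of Theorem \ref{minimal kernel} applied to $\Ker T_G$, which is a legitimate shortcut.
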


\begin{proof}  Since $\phi_m$ is a maximal vector for $\Ker T_F,$ Theorem \ref{thm uTg} implies $u\phi_m$ is a maximal vector for the minimal Toeplitz kernel $\Ker T_{F\overline{u}/u_o}$ containing   $u\Ker T_F$. That is, $$u\Ker T_F\subseteq {\rm K_{\min}} (u\phi_m)= \Ker T_{F\overline{u}/u_o}.$$
So Theorem \ref{minimal kernel} gives $$C_\psi (u\Ker T_F)\subseteq C_\psi( \Ker T_{F\overline{u}/u_o})\subseteq \Ker T_H$$ with $H$ defined in \eqref{hpsi}. This can also be proved by the  facts  $ C_\psi(u\Ker T_F)=(u\circ \psi) C_\psi(\Ker T_F)$ and then Theorem \ref{minimal kernel} together with Theorem \ref{thm uTg} imply $$(u\circ \psi) C_\psi(\Ker T_F)\subseteq (u\circ \psi) \Ker T_{(F\circ \psi)\psi/z}\subseteq  \Ker T_H.$$   Next we show $\Ker T_H$ is the minimal Toeplitz kernel containing $C_\psi(u\Ker T_F).$

By decomposing $\phi_m=IO$ with $I$ inner and $O$ outer in $H^2,$ we obtain that \begin{eqnarray}\Ker T_F=\Ker T_{\overline{z}\overline{\phi_m}/O}\label{Tg}.\end{eqnarray}
Next we consider the minimal kernel ${\rm K_{\min}}(C_{\psi}(u\phi_m))$ for the maximal vector $ C_\psi(u\phi_m)\in C_\psi(u \Ker T_F)\subseteq H^2.$ Theorem \ref{mini kernel} implies \begin{eqnarray}  {\rm K_{\min}}(C_{\psi}(u\phi_m))=\Ker T_{\frac{\overline{z}(\overline{ u}\circ \psi)(\overline{\phi_m}\circ \psi)} {(u_o\circ\psi) (O\circ \psi)}}. \label{Tkmin}\end{eqnarray}

Since $C_\psi(u\phi_m)\in C_\psi(u \Ker T_F),$ any Toeplitz kernel containing $C_\psi(u\Ker T_F)$ must also contain
${\rm K_{ min}}(C_{\psi}(u\phi_m))$. Thus we conclude ${\rm K_{ min}}(C_{\psi}(u\phi_m))$ is the minimal kernel containing
$C_\psi(u \Ker T_F)$. And then we show ${\rm K_{ min}}(C_{\psi}(u\phi_m))=\Ker T_H$ with $H$ given in \eqref{hpsi}.

Using \eqref{Tg}, Theorem \ref{equalkernel} yields that
$$F=\overline{z}\frac{\overline{\phi_m}}{O}\frac{\overline{p}}{\overline{q}}$$ with $p, q\in  H^2$ outer. This further implies
$$\psi (F\circ\psi)= \frac{\overline{\phi_m}\circ \psi} {O\circ\psi}\frac{\overline{p}\circ \psi} {\overline{q} \circ\psi}$$ with $p, q\in  H^2$ outer. Therefore,
\begin{eqnarray}\Big(\frac{\overline{z}(\overline{ u}\circ \psi)(\overline{\phi_m}\circ \psi )}{(u_o\circ\psi) (O\circ \psi)}\Big)\frac{\overline{p} \circ \psi}{\overline{q} \circ\psi} = \frac{\overline{z}(\overline{ u}\circ \psi) }{(u_o\circ\psi)}\Big(\frac{\overline{\phi_m}\circ \psi} { O\circ \psi}
\frac{\overline{p}\circ \psi}{\overline{q}\circ\psi} \Big)= \frac{(\overline{u}\circ \psi)} {z (u_o\circ\psi) }\psi (F\circ\psi)=H.\label{equal1}\end{eqnarray}
Since   $p \circ\psi$ and $ q\circ\psi$ are outer functions, and using  \eqref{Tkmin}, \eqref{equal1}  and Theorem \ref{equalkernel}, we conclude that $${\rm K_{ min}}(C_{\psi}(u\phi_m))=\Ker T_H,$$ which is a minimal Toeplitz kernel containing $C_\psi (u \Ker T_F).$ This completes the proof.
\end{proof}

Let $\psi$ be an automorphism; then  $C_\psi(u\Ker T_F)=\Ker T_H$ can be characterized as below.

 \begin{prop}\label{prop equal3} Let $F\in L^\infty\setminus\{0\}$ and $u\in H^2$ be outer such that $\Ker T_F\neq \{0\}$ and $u\Ker T_F\subseteq H^2.$ For an automorphism $\psi$ on $\mathbb{D}$, $C_\psi(u\Ker T_F)=\Ker T_{H}$  if and only if $u\circ \psi\in \mathcal{C}(\Ker T_{(F\circ \psi)\psi/z})$ and $u^{-1}\circ \psi\in \mathcal{C}(\Ker T_H)$  with \begin{eqnarray*}H=\psi (F\circ\psi) \frac{(\overline{u}\circ \psi)} {z (u\circ\psi)}. \end{eqnarray*}\end{prop}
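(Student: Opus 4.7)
My plan is to reduce the claim to a direct application of Theorem \ref{wequalkernel}. Since $\psi$ is an automorphism of $\mathbb{D}$, Theorem \ref{psiT} gives $C_\psi(\Ker T_F)=\Ker T_g$, where I set $g:=(F\circ\psi)\psi/z$. Because $C_\psi$ acts pointwise, $C_\psi(u\Ker T_F)=(u\circ\psi)\,C_\psi(\Ker T_F)=(u\circ\psi)\Ker T_g$. Thus the equality $C_\psi(u\Ker T_F)=\Ker T_H$ is equivalent to $u\circ\psi$ being a surjective multiplier from $\Ker T_g$ onto $\Ker T_H$, and I may now invoke the multiplier-equivalence theorem.

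I apply Theorem \ref{wequalkernel} with the symbols $g$ and $h=H$ and with multiplier $u\circ\psi$, noting that $(u\circ\psi)^{-1}=u^{-1}\circ\psi$. That theorem characterises the equality $(u\circ\psi)\Ker T_g=\Ker T_H$ by two conditions: (i) $u\circ\psi\in\mathcal{C}(\Ker T_g)$ together with $u^{-1}\circ\psi\in\mathcal{C}(\Ker T_H)$, and (ii) for some (equivalently every) maximal vector $k$ of $\Ker T_g$, the function $(u\circ\psi)\,k$ is maximal for $\Ker T_H$. Condition (i) is precisely the assertion of the proposition, so the remaining task is to show that condition (ii) is automatic under the standing hypotheses.

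For this, let $\phi_m$ be any maximal vector for $\Ker T_F$. Theorem \ref{minimal kernel} guarantees that $\phi_m\circ\psi$ is a maximal vector for $\Ker T_g$, while Theorem \ref{thm uTg1} guarantees that $C_\psi(u\phi_m)=(u\circ\psi)(\phi_m\circ\psi)$ is a maximal vector for $\Ker T_H$. Here I use that $u$ is outer, so that $u_o=u$ and the symbol $H$ in Theorem \ref{thm uTg1} coincides with the $H$ written in the present proposition. Hence condition (ii) holds for the explicit choice $k=\phi_m\circ\psi$, and Theorem \ref{wequalkernel} collapses to condition (i), yielding the stated equivalence.

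The main routine verification is the check that the formula for $H$ in Theorem \ref{thm uTg1}, written with $u_o$, matches the formula here, written with $u$; this is immediate from the outer hypothesis on $u$. The principal conceptual obstacle — turning the composition question into a multiplier question — is handled cleanly by Theorem \ref{psiT} once $\psi$ is assumed to be an automorphism, after which the maximal-vector bookkeeping provided by Theorems \ref{minimal kernel} and \ref{thm uTg1} removes condition (ii) from the picture.
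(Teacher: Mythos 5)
Your proposal is correct and follows essentially the same route as the paper: reduce $C_\psi(u\Ker T_F)=\Ker T_H$ to the multiplier equation $(u\circ\psi)\Ker T_{(F\circ\psi)\psi/z}=\Ker T_H$ via the automorphism case of Theorem \ref{minimal kernel}/\ref{psiT}, then apply Theorems \ref{wequalkernel} and \ref{thm uTg1}. You merely make explicit what the paper leaves implicit, namely that condition (ii) of Theorem \ref{wequalkernel} is automatic because Theorem \ref{thm uTg1} (with $u_o=u$ since $u$ is outer) supplies the required maximal vector $(u\circ\psi)(\phi_m\circ\psi)$ for $\Ker T_H$.
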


 \begin{proof} Noting that  $C_\psi(u\Ker T_F)=(u\circ \psi) C_\psi(\Ker T_F)$,  Theorem \ref{minimal kernel} implies $$C_\psi(\Ker T_F)=\Ker T_{(F\circ \psi)\psi/z}$$ for an automorphism $\psi$.  Then   $C_\psi(u\Ker T_F)=\Ker T_H$ is equivalent to  $$(u\circ \psi) \Ker T_{(F\circ \psi)\psi/z}=\Ker T_H.$$  So the desired equivalence follows from Theorems \ref{wequalkernel} and \ref{thm uTg1}.\end{proof}

A sufficient condition on $u$ is similarly established in the proposition below.

\begin{prop} \label{prop uTg2}  Let $F\in L^\infty\setminus\{0\}$ and $u\in H^2$ be outer such that $\Ker T_F\neq \{0\}$ and $u\Ker T_F\subseteq H^2.$ For an automorphism $\psi$ on $\mathbb{D}$,  if $u^{-1}\in H^\infty,$  it follows that
\begin{eqnarray*}C_\psi(u \Ker T_F) =\Ker T_{\frac{\psi (F\circ\psi)} {z(u\circ\psi) }}.\label{equal2}\end{eqnarray*}\end{prop}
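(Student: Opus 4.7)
The plan is to reduce this statement to two results already proved in the paper: Proposition \ref{prop equal}, which lets us rewrite $u\Ker T_F$ as a genuine Toeplitz kernel once $u^{-1}\in H^\infty$, and Theorem \ref{psiT}, which tells us exactly what a composition operator with automorphism symbol does to a Toeplitz kernel. Chaining these two should give the formula directly, with no need to invoke the more delicate machinery of maximal vectors or Proposition \ref{prop equal3}.

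Concretely, I would first observe that the hypothesis $u^{-1}\in H^\infty$ makes $1/u$ a bounded multiplier of $H^2$ and guarantees in particular that $u$ is outer (automatic given $u^{-1}\in H^\infty$, so no conflict with the outer hypothesis) and that $F/u\in L^\infty$. Proposition \ref{prop equal} then applies to give
\begin{equation*}
u\Ker T_F \;=\; \Ker T_{F/u},
\end{equation*}
so in particular $u\Ker T_F$ is itself a (nontrivial) Toeplitz kernel. Next I would apply Theorem \ref{psiT} with $F$ replaced by $F/u$: since $\psi$ is an automorphism,
\begin{equation*}
C_\psi\bigl(\Ker T_{F/u}\bigr) \;=\; \Ker T_{G}, \qquad G \;=\; \bigl((F/u)\circ\psi\bigr)\frac{\psi}{z}.
\end{equation*}
Combining these two equalities yields $C_\psi(u\Ker T_F) = \Ker T_G$, and a direct simplification
\begin{equation*}
G \;=\; \frac{(F\circ\psi)}{(u\circ\psi)}\cdot\frac{\psi}{z} \;=\; \frac{\psi(F\circ\psi)}{z(u\circ\psi)}
\end{equation*}
matches the symbol in the statement.

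There is essentially no obstacle here; the only points requiring a word of justification are that $F/u\in L^\infty$ (immediate from $u^{-1}\in H^\infty$), that $\Ker T_{F/u}$ is nontrivial (immediate from $u\Ker T_F\subseteq H^2$ being nontrivial), and that the resulting symbol $G$ lies in $L^\infty$ so that Theorem \ref{psiT} applies. Since $\psi$ is an automorphism with $|\psi|=1$ on $\mathbb{T}$ and $(u\circ\psi)^{-1} = (u^{-1})\circ\psi\in H^\infty$, the factor $\psi/(z(u\circ\psi))$ is in $L^\infty$, so $G\in L^\infty$ as needed. The whole proof therefore amounts to quoting Proposition \ref{prop equal} and Theorem \ref{psiT} and simplifying an algebraic expression.
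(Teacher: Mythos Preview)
Your proof is correct and uses the same two ingredients as the paper---Proposition~\ref{prop equal} and the automorphism formula for $C_\psi$ on a Toeplitz kernel---just in the opposite order: you first rewrite $u\Ker T_F=\Ker T_{F/u}$ and then apply Theorem~\ref{psiT}, whereas the paper first uses Theorem~\ref{minimal kernel} (which for automorphisms is Theorem~\ref{psiT}) to get $C_\psi(\Ker T_F)=\Ker T_{(F\circ\psi)\psi/z}$ and then applies Proposition~\ref{prop equal} with multiplier $u\circ\psi$. The two routes are equivalent in content and difficulty.
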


\begin{proof} From the assumptions, Theorem \ref{minimal kernel} implies that $$C_\psi(\Ker T_F)=\Ker T_{(F\circ\psi)\psi/z}.$$  Further considering $u$ is outer and $u^{-1}\in H^\infty,$ it yields that $u\circ \psi$ is outer and $u^{-1}\circ \psi\in H^\infty.$ These facts together with Proposition \ref{prop equal} imply that  $$(u\circ \psi) \Ker T_{(F\circ\psi)\psi/z}=\Ker T_{\frac{(F\circ\psi)\psi}{z(u\circ \psi)}},$$ ending the proof.\end{proof}

In particular, every subspace of the form $C_\psi(uK_\theta)$ where $\theta$ inner and $u\in H^2$,
is contained in a minimal Toeplitz kernel -- this is a nontrivial generalization of \cite[Corollary 5.2]{CaP5}.
 \begin{cor}\label{cor umodel} Every subspace of $H^2$ of the form $C_{\psi}(uK_\theta)$, where $u\in H^2$, $\theta$ and $\psi$ are  inner functions,  is contained in  a minimal Toeplitz kernel $\Ker T_{ \psi (\overline{\theta}\circ\psi) \frac{(\overline{u}\circ \psi)} {z (u_o\circ\psi)}}$, where $u_o$ is the outer factor of $u.$\end{cor}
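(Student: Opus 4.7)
The plan is simply to specialize Theorem~\ref{thm uTg1} to the model space case by taking $F = \overline{\theta}$. Since $\theta$ is inner, $\overline{\theta} \in L^\infty \setminus \{0\}$ with $|\overline{\theta}| = 1$ a.e.\ on $\mathbb{T}$, and $\Ker T_{\overline{\theta}} = K_\theta$ is nontrivial as long as $\theta$ is non-constant. The requirement $u \Ker T_F \subseteq H^2$ in Theorem~\ref{thm uTg1} then reads $u K_\theta \subseteq H^2$, which is implicit in the assumption that $C_\psi(uK_\theta)$ is a subspace of $H^2$. Thus all the hypotheses of Theorem~\ref{thm uTg1} are in place with this choice of $F$.

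Reading off the symbol formula
\[
H = \psi\,(F\circ\psi)\,\frac{\overline{u}\circ\psi}{z\,(u_o\circ\psi)}
\]
from Theorem~\ref{thm uTg1} and substituting $F = \overline{\theta}$ yields the symbol
\[
\psi\,(\overline{\theta}\circ\psi)\,\frac{\overline{u}\circ\psi}{z\,(u_o\circ\psi)},
\]
which is exactly the one asserted in the corollary. Hence the minimal Toeplitz kernel containing $C_\psi(uK_\theta)$ is $\Ker T_{\psi(\overline{\theta}\circ\psi)(\overline{u}\circ\psi)/[z(u_o\circ\psi)]}$.

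No step presents a genuine obstacle, since the entire content is absorbed into the already-proven Theorem~\ref{thm uTg1}; the corollary is a direct translation into the language of model spaces and amounts to observing that $K_\theta = \Ker T_{\overline{\theta}}$. As an optional addendum, Theorem~\ref{thm uTg1} also yields an explicit maximal vector for the minimal containing Toeplitz kernel, namely $C_\psi(u\phi_m)$ for any maximal vector $\phi_m$ of $K_\theta$; by Remark~\ref{rem S*theta} one may take $\phi_m = S^{*}\theta$, giving $C_\psi(u \cdot S^{*}\theta)$ as a concrete choice.
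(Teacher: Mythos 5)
Your proposal is correct and matches the paper's (implicit) argument exactly: the corollary is obtained by specializing Theorem~\ref{thm uTg1} to $F=\overline{\theta}$, so that $\Ker T_F=K_\theta$ and the symbol $H$ becomes $\psi(\overline{\theta}\circ\psi)(\overline{u}\circ\psi)/[z(u_o\circ\psi)]$. Your addendum on the maximal vector $C_\psi(u\,S^*\theta)$ is likewise a valid consequence of Theorem~\ref{thm uTg1} combined with Remark~\ref{rem S*theta}.
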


As an extension of Theorem \ref{containing-model}, we   formulate the minimal model space containing $C_\psi(uK_\theta)$ with inner functions $\theta$ and  $u.$

\begin{thm}\label{thm:3.8} Let $u$, $\theta$ and $\psi$ be three inner functions such that $K_\theta\neq \{0\}$, and define \begin{eqnarray*}\eta(z)=\left\{
                                    \begin{array}{ll}
                                      z(\theta  \circ \psi)(u\circ\psi)/ \psi, & \theta(0)=0\;\mbox{or}\; u(0)=0, \\
(\theta  \circ \psi)(u\circ\psi), & \theta(0)\neq 0, u(0)\neq 0\;\mbox{and}\; \psi(0)=0,\\
             z ( \theta  \circ \psi)(u\circ\psi), & \theta(0)\neq 0,\;\;u(0)\neq 0\;\mbox{and}\; \psi(0)\neq 0.
                                    \end{array}
                                  \right.\end{eqnarray*} Then $\eta$ is inner and the composition operator $$C_\psi:\;uK_\theta\rightarrow K_\eta$$ is well-defined and bounded. Moreover, $K_\eta$ is the smallest  model space containing  $C_\psi(uK_\theta)$.\end{thm}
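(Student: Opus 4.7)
The plan is to follow the template used in the excerpt's reproof of Theorem \ref{containing-model}. First I would identify the minimal Toeplitz kernel containing $C_\psi(uK_\theta)$ via Corollary \ref{cor umodel}, and then use Proposition \ref{prop FG} to convert the ``smallest model space'' question into a divisibility problem between inner functions. Since $u$ is inner, its outer factor $u_o$ is a unimodular constant, so $u_o\circ\psi$ is constant; Corollary \ref{cor umodel} therefore identifies the minimal Toeplitz kernel containing $C_\psi(uK_\theta)$ as $\Ker T_H$, where
$$H \;=\; \frac{\psi\,(\overline{\theta}\circ\psi)\,(\overline{u}\circ\psi)}{z}.$$

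To find the smallest model space $K_\eta=\Ker T_{\overline{\eta}}$ containing $C_\psi(uK_\theta)$, I would look for the smallest inner $\eta$ with $\Ker T_H\subseteq\Ker T_{\overline{\eta}}$. By Proposition \ref{prop FG} this is equivalent to $\overline{\eta}/H\in\overline{\mathcal{N}_+}$; after taking conjugates and using $|\psi|=|\theta\circ\psi|=|u\circ\psi|=1$ a.e.\ on $\mathbb{T}$, the condition becomes
$$\frac{\eta\,\psi}{z\,(\theta\circ\psi)\,(u\circ\psi)}\;\in\;\mathcal{N}_+.$$
Because this expression is unimodular, membership in $\mathcal{N}_+$ forces it into $H^\infty$, so it must be inner. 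Parametrising $\eta = z(\theta\circ\psi)(u\circ\psi)\phi/\psi$ with $\phi$ inner, the task reduces to choosing the smallest inner $\phi$ for which $\psi$ divides $z(\theta\circ\psi)(u\circ\psi)\phi$ in the sense of inner divisibility.

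A case analysis then supplies $\phi$ in each of the three possibilities. If $\theta(0)=0$, write $\theta(z)=z\theta_1(z)$ with $\theta_1$ inner; then $\theta\circ\psi=\psi\,(\theta_1\circ\psi)$, so $\psi$ already divides the numerator and I may take $\phi=1$ to obtain $\eta=z(\theta\circ\psi)(u\circ\psi)/\psi$. The case $u(0)=0$ is handled symmetrically and produces the same $\eta$. If $\theta(0)\neq 0$, $u(0)\neq 0$ and $\psi(0)=0$, then $\psi=z\psi_1$ for an inner $\psi_1$, and choosing $\phi=\psi_1$ yields $\eta=(\theta\circ\psi)(u\circ\psi)$. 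Otherwise no cancellation is available and one must take $\phi=\psi$, producing $\eta=z(\theta\circ\psi)(u\circ\psi)$. In each case $\eta$ is a finite product of inner functions, hence inner. Well-definedness and boundedness of $C_\psi\colon uK_\theta\to K_\eta$ then follow from the inclusion $\Ker T_H\subseteq K_\eta$ together with the standard boundedness of $C_\psi$ on $H^2$ (Littlewood subordination), while $uK_\theta\subseteq H^2$ because $|u|=1$ a.e.\ on $\mathbb{T}$. For minimality, any model space $K_v$ containing $C_\psi(uK_\theta)$ must contain the minimal Toeplitz kernel $\Ker T_H$; Proposition \ref{prop FG} then forces $\eta\mid v$, hence $K_\eta\subseteq K_v$. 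The main obstacle is bookkeeping in the case split, ensuring each choice of $\phi$ really produces an inner $\eta$ and matches the convention of Theorem \ref{containing-model}; this is routine once the identities $(\theta\circ\psi)/\psi=\theta_1\circ\psi$ and $\psi/z=\psi_1$ are invoked.
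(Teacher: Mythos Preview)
Your proposal is correct and follows essentially the same route as the paper's proof: invoke Corollary~\ref{cor umodel} (with $u_o$ constant since $u$ is inner) to identify the minimal Toeplitz kernel, apply Proposition~\ref{prop FG} to reduce to the unimodular Smirnov-class condition $\eta\psi/\bigl(z(\theta\circ\psi)(u\circ\psi)\bigr)\in H^\infty$, and then carry out the same three-way case split to select the minimal $\phi$. Your write-up is slightly more explicit in the factorisations $\theta=z\theta_1$, $\psi=z\psi_1$ and in justifying boundedness and minimality, but the argument is otherwise identical to the paper's.
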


\begin{proof} Since $K_\eta$ is the minimal model space  containing $C_\psi(uK_\theta)$, Corollary \ref{cor umodel} implies  that
 $\Ker T_{\psi(\overline{\theta}\circ\psi) (\overline{u}\circ \psi)/z}  \subseteq K_\eta=\Ker T_{\overline{\eta}}.$  Using Proposition \ref{prop FG}, it follows that  $$\Ker T_{\psi(\overline{\theta}\circ\psi) (\overline{u}\circ \psi)/z} \subseteq
\Ker T_{\overline{\eta}}\;\;\mbox{if and only if}\; \;\frac{\overline{\eta}z}{\psi(\overline{\theta} \circ \psi)
(\overline{u}\circ \psi)}\in \overline{\mathcal{N}_+}.$$
This means that $$\frac{\eta \psi }{z ( \theta  \circ \psi)( u\circ \psi)}\in \mathcal{N}_+,$$  which is unimodular.  Thus the definition of the Smirnov class implies $$\frac{\eta\psi }{z ( \theta  \circ \psi)( u\circ \psi)}\in H^\infty\subseteq H^2.$$ Hence we must find the minimal inner function $\phi$ such that
$$\eta(z)=\frac{z ( \theta  \circ \psi)(u\circ\psi)}{ \psi } \phi \in H^2.$$

For the case $\theta(0)=0$ or $u(0)=0,$ then $\psi$ divides $\theta\circ\psi$ or $u\circ\psi$,  and so take $\phi(z)=1$, which verifies $\eta(z)=z ( \theta  \circ \psi)(u\circ\psi)/ \psi.$

For the case $\theta(0)\neq 0$ and $u(0)\neq 0,$ we divide into two subcases: if $\psi(0)=0,$ we take $\phi=\psi/z$, it follows $\eta(z)= ( \theta  \circ \psi)(u\circ\psi);$  if   $\psi(0)\neq 0,$ we take $\phi=\psi$ it follows $\eta(z)=z (\theta  \circ \psi)(u\circ\psi),$ ending the proof.  \end{proof}
\begin{rem}Letting $u=1$ in Theorem \ref{thm:3.8}, Theorem \ref{containing-model} follows. \end{rem}
\subsection{The minimal Toeplitz kernel containing $uC_\psi(\Ker T_F)$}

To begin, we present a lemma about multipliers between minimal Toeplitz kernels. We then use it to formulate the minimal Toeplitz kernel containing the image of $\Ker T_F$ under the weighted composition operator $uC_\psi.$

 \begin{lem}\label{lem Kmin} If $u$, $v$, $uv\in H^2$
then $u K_{\min}(v)\subseteq K_{\min}(uv)$. \end{lem}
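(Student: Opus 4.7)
The plan is to realize both $K_{\min}(v)$ and $K_{\min}(uv)$ as explicit Toeplitz kernels via Theorem \ref{mini kernel} and then invoke Theorem \ref{thm uTg}. By Theorem \ref{mini kernel}, we have $K_{\min}(v)=\Ker T_g$ and $K_{\min}(uv)=\Ker T_{g'}$ with unimodular symbols
$$g=\frac{\overline{z}\,\overline{v}}{O_v},\qquad g'=\frac{\overline{z}\,\overline{uv}}{O_{uv}}.$$
Using the multiplicativity of the outer factor $O_{uv}=O_uO_v$ together with the pointwise identity $u\overline{u}/O_u=\overline{O_u}$ on $\mathbb{T}$ (valid since $|u|^2=|O_u|^2$), a direct calculation yields the key algebraic identity
$$g'=\frac{\overline{u}}{O_u}\cdot g=g\cdot\frac{\overline{u}}{O_u}.$$

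Next I would invoke Theorem \ref{thm uTg} applied to $\Ker T_g=K_{\min}(v)$ with multiplier $u$, whose outer factor is $O_u$: provided $uK_{\min}(v)\subseteq H^2$, that theorem asserts that the minimal Toeplitz kernel containing $uK_{\min}(v)$ is $\Ker T_{g\overline{u}/O_u}$. By the symbol identity above, this kernel is precisely $\Ker T_{g'}=K_{\min}(uv)$, so the desired inclusion $uK_{\min}(v)\subseteq K_{\min}(uv)$ follows immediately.

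The main obstacle will be verifying the hypothesis $uK_{\min}(v)\subseteq H^2$ of Theorem \ref{thm uTg}. For any $f\in K_{\min}(v)$, the kernel condition $\overline{v}f/O_v\in\overline{H^2}$ yields a boundary representation $f=v\overline{h}/\overline{O_v}$ on $\mathbb{T}$ for some $h\in H^2$ with $|h|=|f|$. Multiplying by $u$ gives $uf=uv\,\overline{h}/\overline{O_v}$ on $\mathbb{T}$. Since $u,f\in\mathcal{N}_+$ forces $uf\in\mathcal{N}_+$, the condition $uf\in H^2$ is equivalent to $uf\in L^2$; the hypothesis $uv\in H^2$, combined with this boundary representation, is what provides the needed $L^2$-integrability and hence completes the verification.
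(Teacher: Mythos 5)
Your symbol computation and the appeal to Theorem \ref{thm uTg} are correct as far as they go, and they are essentially the paper's own argument in packaged form: the paper writes $u=u_iu_o$, $v=v_iv_o$, takes $h\in K_{\min}(v)=\Ker T_{\overline{z}\,\overline{v_iv_o}/v_o}$ and tests $uh$ directly against the symbol $\overline{z}\,\overline{u_iu_ov_iv_o}/(u_ov_o)$ of $K_{\min}(uv)$, which is exactly your identity $g'=g\,\overline{u}/O_u$ carried out by hand on a single element instead of quoted from Theorem \ref{thm uTg}. So the two routes coincide up to the packaging.

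The genuine gap is your third paragraph: you have correctly isolated the hypothesis $uK_{\min}(v)\subseteq H^2$ of Theorem \ref{thm uTg} as the crux, but the proposed verification does not close it. The boundary representation gives $|uf|=|uv|\,|h|/|O_v|=|u|\,|f|$ on $\mathbb{T}$, which is precisely the quantity whose square-integrability is at issue, so $uv\in H^2$ contributes nothing: $1/|O_v|=1/|v|$ can be large exactly where $|h|=|f|$ is large. Worse, the inclusion $uK_{\min}(v)\subseteq H^2$ genuinely fails under the stated hypotheses $u,v,uv\in H^2$. Take $v=\theta$ a Blaschke product with zeros $a_n=1-2^{-n}$ and $u=(1-z)^{-1/4}$, which is outer and in $H^2$; then $uv\in H^2$ automatically since $|uv|=|u|$ a.e., and $K_{\min}(\theta)=K_{z\theta}\supseteq K_\theta$ contains the kernels $k_{a_n}=(1-\overline{a_n}z)^{-1}$. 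A direct estimate gives $\|uk_{a_n}\|^2/\|k_{a_n}\|^2\asymp(1-a_n)^{-1/2}\to\infty$, so multiplication by $u$ is unbounded from $K_\theta$ into $H^2$ and hence, being a closed operator, cannot be everywhere defined: some $f\in K_{\min}(\theta)$ has $uf\notin H^2$, hence $uf\notin K_{\min}(uv)$. So the needed $L^2$-integrability is a genuinely extra assumption (equivalently, $|u|^2dm$ must be a Carleson measure for $K_{\min}(v)$), and your argument cannot be completed as written. You are in good company: the paper's proof passes over the same point, since its displayed product $\overline{u}_o\cdot\overline{v}_i\overline{v}_o\overline{z}h/v_o$ is a priori only in $\overline{zH^1}$, and promoting it to $\overline{zH^2}$ again amounts to showing $|u|\,|h|\in L^2$. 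The statement should either carry the hypothesis $uK_{\min}(v)\subseteq H^2$ (under which your argument and the paper's both finish immediately), or be restricted to those $h\in K_{\min}(v)$ with $uh\in H^2$, which is all that is actually used in the proof of Theorem \ref{thm uTg2}.
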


\begin{proof} Write $u = u_iu_o$ and $v= v_iv_o$ in their inner--outer decompositions. By Theorem \ref{mini kernel}, given any $h\in K_{\min}(v)$, it holds that $h \in \Ker T_{\overline{v}_i\overline{v}_o \overline{z}/v_o}$. And then
 $$\frac{\overline{u}_i\overline{u}_o\overline{v}_i \overline{v}_o \overline{z}}{u_ov_o}uh=\frac{ \overline{u}_o\overline{v}_i \overline{v}_o \overline{z}}{ v_o} h= \overline{u}_o\frac{\overline{v}_i \overline{v}_o \overline{z}}{ v_o} h\in\overline{zH^2}.$$
 That is $uh \in \Ker T_{\overline{u}_i\overline{u}_o \overline{v}_i \overline{v}_o\overline{z}/(u_ov_o)}=K_{\min}(uv),$ ending the proof. \end{proof}

 Now we can find the minimal  Toeplitz kernel containing $uC_\psi( \Ker T_F)$.

\begin{thm} \label{thm uTg2}  Let $F\in L^\infty\setminus\{0\}$,  $\psi$ be inner function and $u\in H^2$ such that $\Ker T_F\neq \{0\}$ and $uC_\psi( \Ker T_F)\subseteq  H^2$. Then there exists a minimal Toeplitz kernel containing $uC_\psi( \Ker T_F)$, which is $\Ker T_{G}$ with \begin{eqnarray}G=\psi (F\circ\psi) \frac{ \overline{u} } {z u_o} \label{MG}\end{eqnarray}and $u_o$ is the outer factor of $u.$ Moreover, if $\phi_m$ is a maximal vector for $\Ker T_F,$ then $u(\phi_m\circ \psi)$ is a maximal vector for $\Ker T_G.$ \end{thm}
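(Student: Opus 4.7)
The proof follows the same template as Theorem \ref{thm uTg1}, but the ordering of the multiplier $u$ and composition $C_\psi$ is reversed, so the symbols involve $u$ rather than $u\circ\psi$. My plan is to identify an explicit maximal vector for the candidate kernel $\Ker T_G$, then squeeze $uC_\psi(\Ker T_F)$ between this singleton and the kernel itself.

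First I would set up the candidate maximal vector. Given that $\phi_m$ is maximal for $\Ker T_F$, Theorem \ref{minimal kernel} tells us that $\phi_m\circ\psi$ is maximal for the minimal Toeplitz kernel containing $C_\psi(\Ker T_F)$, namely $\Ker T_{(F\circ\psi)\psi/z}$. In particular $K_{\min}(\phi_m\circ\psi)=\Ker T_{(F\circ\psi)\psi/z}$. Applying Lemma \ref{lem Kmin} with $v=\phi_m\circ\psi$ gives
\begin{equation*}
u\, K_{\min}(\phi_m\circ\psi)\subseteq K_{\min}\!\bigl(u(\phi_m\circ\psi)\bigr),
\end{equation*}
and combining this with $uC_\psi(\Ker T_F)\subseteq u\,\Ker T_{(F\circ\psi)\psi/z}=u\,K_{\min}(\phi_m\circ\psi)$ yields the inclusion $uC_\psi(\Ker T_F)\subseteq K_{\min}(u(\phi_m\circ\psi))$. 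Conversely, because $u(\phi_m\circ\psi)\in uC_\psi(\Ker T_F)$, any Toeplitz kernel containing $uC_\psi(\Ker T_F)$ must contain $K_{\min}(u(\phi_m\circ\psi))$. So minimality is established once I identify this minimal kernel with $\Ker T_G$.

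The core computation is to verify $K_{\min}(u(\phi_m\circ\psi))=\Ker T_G$. Write $\phi_m=IO$ and $u=u_iu_o$ in their inner-outer factorizations. Then $u(\phi_m\circ\psi)$ has inner part $u_i(I\circ\psi)$ and outer part $u_o(O\circ\psi)$ (using \cite[Proposition 6.5]{GMR} and \cite[Theorem 4.27]{CHL}). Theorem \ref{mini kernel} therefore gives
\begin{equation*}
K_{\min}\!\bigl(u(\phi_m\circ\psi)\bigr)=\Ker T_{\bar z\,\overline{u}\,(\overline{\phi_m}\circ\psi)/(u_o(O\circ\psi))}.
\end{equation*}
On the other hand, since $\Ker T_F=\Ker T_{\bar z\overline{\phi_m}/O}$ by Theorem \ref{mini kernel}, Theorem \ref{equalkernel} yields $F=\bar z(\overline{\phi_m}/O)(\bar p/\bar q)$ with $p,q\in H^2$ outer. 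Substituting $\psi$ into $F$ and multiplying by $\psi/z$ and $\overline{u}/(zu_o)$, I obtain
\begin{equation*}
G=\psi(F\circ\psi)\frac{\overline{u}}{zu_o}=\frac{\bar z\,\overline{u}\,(\overline{\phi_m}\circ\psi)}{u_o(O\circ\psi)}\cdot\frac{\overline{p\circ\psi}}{\overline{q\circ\psi}}.
\end{equation*}
Since $p\circ\psi$ and $q\circ\psi$ remain outer (again by \cite[Theorem 4.27]{CHL}), the two symbols differ by the ratio $\overline{p\circ\psi}/\overline{q\circ\psi}$ of conjugates of outer $H^2$ functions, so Theorem \ref{equalkernel} forces the equality of Toeplitz kernels, as required.

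The maximal vector assertion is then an automatic byproduct: $u(\phi_m\circ\psi)$ is a maximal vector for its own $K_{\min}$, and we have just shown this equals $\Ker T_G$. I do not expect any serious obstacle; the only step that demands care is making sure the inner-outer bookkeeping is done correctly when writing $K_{\min}(u(\phi_m\circ\psi))$, in particular that the denominator picks up only the outer factor $u_o(O\circ\psi)$ and that the outer factor of $u\circ\psi$ in the corresponding computation for Theorem \ref{thm uTg1} is correctly replaced here by $u_o$ (without precomposition by $\psi$).
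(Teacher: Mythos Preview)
Your proposal is correct and follows essentially the same route as the paper's proof: the same chain $uC_\psi(\Ker T_F)\subseteq uK_{\min}(\phi_m\circ\psi)\subseteq K_{\min}(u(\phi_m\circ\psi))$ via Theorem \ref{minimal kernel} and Lemma \ref{lem Kmin}, the same minimality argument from $u(\phi_m\circ\psi)\in uC_\psi(\Ker T_F)$, and the same identification of $K_{\min}(u(\phi_m\circ\psi))$ with $\Ker T_G$ through Theorems \ref{mini kernel} and \ref{equalkernel}. One cosmetic slip: in the prose you say ``multiplying by $\psi/z$ and $\overline{u}/(zu_o)$'', which double-counts a factor of $1/z$; the displayed identity is nonetheless correct, and the paper multiplies simply by $\psi$ and then by $\overline{u}/(zu_o)$.
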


\begin{proof} Let $\phi_m$ be a maximal vector for $\Ker T_F$. Theorem \ref{minimal kernel} implies $K_{\min}(\phi_m \circ \psi)$ is  the minimal Toeplitz kernel containing  $C_\psi(\Ker T_F)$.   Lemma \ref{lem Kmin} further implies that $$uC_\psi( \Ker T_F)\subseteq uK_{\min}(\phi_m\circ \psi)\subseteq K_{\min}(u(\phi_m\circ\psi)).$$
Since $u(\phi_m \circ\psi)\in uC_\psi(\Ker T_F),$ any Toeplitz kernel containing $uC_\psi(\Ker T_F)$ must also contain
${\rm K_{ min}}(u(\phi_m\circ\psi))$. Thus we conclude ${\rm K_{ min}}(u(\phi_m\circ\psi))$ is the minimal kernel containing
$uC_\psi( \Ker T_F)$. Next we  prove that ${\rm K_{ min}}(u(\phi_m\circ\psi))=\Ker T_G$ with $G$ given in \eqref{MG}.

 Decomposing $\phi_m=IO$ with $I$ inner and $O$ outer in $H^2,$ it follows that \begin{eqnarray}\Ker T_F=\Ker T_{\overline{z}\overline{\phi_m}/O}\label{Tgg}.\end{eqnarray} Meanwhile, Theorem \ref{mini kernel} implies \begin{eqnarray}  {\rm K_{\min}}( u(\phi_m\circ \psi))=\Ker T_{\frac{\overline{z}\overline{ u}(\overline{\phi_m}\circ \psi)} {u_o (O\circ \psi)}}. \label{Tkminn}\end{eqnarray}

Using \eqref{Tgg}, Theorem \ref{equalkernel} yields that
$$F=\overline{z}\frac{\overline{\phi_m}}{O}\frac{\overline{p}}{\overline{q}}$$ with $p, q\in  H^2$ outer. This further indicates
$$F\circ\psi= \overline{\psi}\frac{\overline{\phi_m}\circ \psi} {O\circ\psi}\frac{\overline{p}\circ \psi} {\overline{q} \circ\psi}$$ with $p, q\in  H^2$ outer. Therefore, it
further gives
$$\psi (F\circ\psi)= \frac{\overline{\phi_m}\circ \psi} {O\circ\psi}\frac{\overline{p}\circ \psi} {\overline{q} \circ\psi}$$ with $p, q\in  H^2$ outer. So we deduce that

\begin{eqnarray}\Big(\frac{\overline{z}\overline{ u}(\overline{\phi_m}\circ \psi )}{u_o (O\circ \psi)}\Big)\frac{\overline{p} \circ \psi}{\overline{q} \circ\psi} = \Big(\frac{\overline{\phi_m}\circ \psi} { O\circ \psi}
\frac{\overline{p}\circ \psi}{\overline{q}\circ\psi} \Big)\frac{\overline{z}\overline{ u}}{u_o}= \psi (F\circ\psi)\frac{\overline{u} } {z u_o  }=G.\label{equal12}\end{eqnarray}

Since  $p \circ\psi$ and $ q\circ\psi$ are outer functions,   \eqref{Tkminn}, \eqref{equal12}  and Theorem \ref{equalkernel} imply that
$${\rm K_{ min}}(u(\phi_m\circ\psi))=\Ker T_G,$$
 which is a minimal Toeplitz kernel containing $uC_\psi (\Ker T_F),$ finishing the proof.
\end{proof}

\begin{prop}\label{prop uT1} Let $F\in L^\infty\setminus\{0\}$,  $\psi\in H^2$ be an automorphism on $\mathbb{D}$  and $u$ be an outer function such that $\Ker T_F\neq \{0\}$ and $uC_\psi( \Ker T_F)\subseteq  H^2$. Then $uC_\psi( \Ker T_F)=\Ker T_{G}$ if and only if $u\in \mathcal{C}(\Ker T_{(F\circ \psi)\psi/z})$ and $u^{-1}\in \mathcal{C}(\Ker T_G)$, where $G$ is given in \eqref{MG}. \end{prop}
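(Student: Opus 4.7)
The plan is to reduce this to a direct application of Theorem \ref{wequalkernel}, with the help of the automorphism identity already recorded in Theorem \ref{minimal kernel} and the maximal-vector bookkeeping from Theorem \ref{thm uTg}.

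First I would exploit the hypothesis that $\psi$ is an automorphism: by the last assertion of Theorem \ref{minimal kernel} (equivalently Theorem \ref{psiT}), we have the equality
\[
C_\psi(\Ker T_F)=\Ker T_{(F\circ\psi)\psi/z},
\]
and therefore
\[
uC_\psi(\Ker T_F)=u\,\Ker T_{(F\circ\psi)\psi/z}.
\]
So the problem collapses from a statement about a weighted composition operator to a statement about pure multiplication by $u$ on the single Toeplitz kernel $\Ker T_g$, where $g:=(F\circ\psi)\psi/z$.

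Next I would apply Theorem \ref{wequalkernel} with this $g$ and with $h:=G$. Note that because $u$ is outer we have $u_o=u$, so the formula \eqref{MG} for $G$ agrees exactly with $g\overline{u}/u_o=(F\circ\psi)\psi\overline{u}/(zu)$; that is, $\Ker T_G$ is literally the minimal Toeplitz kernel $\Ker T_{g\overline{u}/u_o}$ produced from $\Ker T_g$ by Theorem \ref{thm uTg}. Theorem \ref{wequalkernel} then says $u\Ker T_g=\Ker T_G$ if and only if the two Carleson-type conditions
\[
u\in\mathcal{C}(\Ker T_g),\qquad u^{-1}\in\mathcal{C}(\Ker T_G)
\]
hold \emph{and} for some (equivalently, every) maximal vector $k$ of $\Ker T_g$ the product $uk$ is a maximal vector for $\Ker T_G$. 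These Carleson conditions are precisely the two conditions stated in the proposition.

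The only thing left to verify is therefore the maximal-vector clause, and this is automatic rather than an obstacle. Indeed, if $\phi_m$ is a maximal vector for $\Ker T_F$, then by Theorem \ref{minimal kernel} its composition $\phi_m\circ\psi$ is a maximal vector for $\Ker T_g=C_\psi(\Ker T_F)$; and then Theorem \ref{thm uTg} (applied with this $g$ and with outer $u$) guarantees that $u(\phi_m\circ\psi)$ is a maximal vector for $\Ker T_{g\overline{u}/u_o}=\Ker T_G$. Consequently the maximal-vector hypothesis of Theorem \ref{wequalkernel} is satisfied for free, the equivalence reduces to the two Carleson conditions above, and the proof is complete. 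The only point requiring a moment's care is the identification $u_o=u$ and the resulting matching between $g\overline{u}/u_o$ in Theorem \ref{thm uTg} and the $G$ of \eqref{MG}; once that is observed, everything lines up.
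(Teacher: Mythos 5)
Your proposal is correct and follows essentially the same route as the paper: use the automorphism case of Theorem \ref{minimal kernel} to rewrite $uC_\psi(\Ker T_F)$ as $u\Ker T_{(F\circ\psi)\psi/z}$ and then invoke Theorem \ref{wequalkernel}. In fact you are slightly more careful than the paper's own two-line proof, since you explicitly verify (via Theorem \ref{thm uTg} and $u_o=u$) that the maximal-vector clause of Theorem \ref{wequalkernel} holds automatically, which is needed to drop it from the stated equivalence.
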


\begin{proof}Theorem \ref{minimal kernel} shows $C_\psi (\Ker T_F)= \Ker T_{(F\circ \psi)\psi/z}$ for the automorphism $\psi$  on $\mathbb{D}$. So $uC_\psi (\Ker T_F)=\Ker T_G$ is equivalent to $u \Ker T_{(F\circ \psi)\psi/z}=\Ker T_G$. Theorem \ref{wequalkernel} further gives the desired results. \end{proof}

\begin{rem}Letting $u^{-1}\in H^\infty$ in Proposition \ref{prop uT1}, the desired equality still holds. \end{rem}

Additionally,  the minimal Toeplitz kernel  containing the subspace  $uC_\psi(K_\theta)$ with  inner $\theta$ is given in the corollary below.
 \begin{cor}\label{cor upsimodel} Let $\theta$ be  inner and $u\in H^2$ such that $uC_\psi(K_\theta)\subseteq H^2$. For any inner function $\psi,$  there exists a minimal Toeplitz kernel $\Ker T_{ \psi (\overline{\theta}\circ\psi)  \overline{u}/(z u_o)}$ containing $uC_\psi(K_\theta)$, where $u_o$ is the outer factor of $u.$\end{cor}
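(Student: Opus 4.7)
The plan is to derive this corollary as a direct specialization of Theorem \ref{thm uTg2}, choosing $F = \overline{\theta}$. No new argument is required; the task is simply to check that the hypotheses of Theorem \ref{thm uTg2} are satisfied and to simplify the resulting symbol.

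First I would verify the hypotheses. Since $\theta$ is inner, $F := \overline{\theta}$ belongs to $L^\infty \setminus \{0\}$, and $\Ker T_F = \Ker T_{\overline{\theta}} = K_\theta$, which (as standard for the statement to be non-vacuous) is assumed nontrivial. The hypothesis $uC_\psi(\Ker T_F) \subseteq H^2$ is then exactly the assumption $uC_\psi(K_\theta) \subseteq H^2$ given in the corollary. Thus Theorem \ref{thm uTg2} applies.

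Next I would compute the symbol $G$ from \eqref{MG} with this choice of $F$. Since $F\circ \psi = \overline{\theta}\circ \psi$, we obtain
$$G \;=\; \psi\,(F\circ \psi)\,\frac{\overline{u}}{z\, u_o} \;=\; \psi\,(\overline{\theta}\circ \psi)\,\frac{\overline{u}}{z\, u_o},$$
where $u_o$ denotes the outer factor of $u$. Theorem \ref{thm uTg2} then yields that $\Ker T_G$ is a minimal Toeplitz kernel containing $uC_\psi(K_\theta)$, which is exactly the kernel asserted in Corollary \ref{cor upsimodel}.

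Since the proof is a direct substitution into an already-proved theorem, there is essentially no obstacle. The only minor point worth flagging is the implicit assumption $K_\theta \neq \{0\}$ needed to invoke Theorem \ref{thm uTg2}; once this is acknowledged, the conclusion follows verbatim.
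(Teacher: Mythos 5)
Your proposal is correct and matches the paper's intent exactly: the corollary is stated as an immediate consequence of Theorem \ref{thm uTg2} obtained by setting $F=\overline{\theta}$, so that $\Ker T_F=K_\theta$ and the symbol \eqref{MG} becomes $\psi(\overline{\theta}\circ\psi)\overline{u}/(z u_o)$. Your remark about the implicit hypothesis $K_\theta\neq\{0\}$ is a fair and harmless observation.
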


This gives rise to an expression for the minimal model space containing $uC_\psi(K_\theta)$ with inner functions $u$ and $\theta$.

\begin{thm}\label{thm uCpsi} Let $u$, $\theta$ and $\psi$ be three inner functions such that $K_\theta\neq \{0\}$, and define \begin{eqnarray*}\eta(z)=\left\{
                                    \begin{array}{ll}
                                      z(\theta  \circ \psi)u/ \psi, & \theta(0)=0, \\
(\theta  \circ \psi)u, & \theta(0)\neq 0\;\mbox{and}\; \psi(0)=0,\\
             z ( \theta  \circ \psi)u, & \theta(0)\neq 0\;\mbox{and}\; \psi(0)\neq 0.
                                    \end{array}
                                  \right.\end{eqnarray*} Then $\eta$ is inner and the weighted composition operator $$uC_\psi:\;K_\theta\rightarrow K_\eta$$ is well-defined and bounded. Moreover, $K_\eta$ is the smallest  model space containing  $uC_\psi(K_\theta)$.\end{thm}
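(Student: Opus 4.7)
My plan is to follow the template of the proof of Theorem \ref{thm:3.8}, replacing the inner composition operator $C_\psi$ with the weighted composition operator $uC_\psi$. The key observation is that because $u$ is now assumed to be inner, its outer factor satisfies $u_o = 1$, which will simplify the symbol appearing in Corollary \ref{cor upsimodel} to $\psi(\overline{\theta}\circ\psi)\overline{u}/z$. So the starting point is to invoke Corollary \ref{cor upsimodel} to conclude that
\[
uC_\psi(K_\theta) \subseteq \Ker T_{\psi(\overline{\theta}\circ\psi)\overline{u}/z},
\]
and this is the smallest Toeplitz kernel with this inclusion property. The task is then to determine the smallest inner function $\eta$ such that $K_\eta = \Ker T_{\overline{\eta}}$ still contains this Toeplitz kernel.

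Next, I would apply Proposition \ref{prop FG} with $g = \psi(\overline{\theta}\circ\psi)\overline{u}/z$ and $h = \overline{\eta}$. The inclusion $\Ker T_g \subseteq \Ker T_{\overline{\eta}}$ is equivalent to $h g^{-1} = \overline{\eta} z / \bigl(\psi(\overline{\theta}\circ\psi)\overline{u}\bigr) \in \overline{\mathcal{N}_+}$, i.e., to
\[
\frac{\eta \psi}{z(\theta\circ\psi)u} \in \mathcal{N}_+.
\]
Since this quotient is unimodular on $\mathbb{T}$, the Smirnov-class definition forces it to lie in $H^\infty$. Thus I need to find the smallest inner $\phi$ for which
\[
\eta(z) = \frac{z(\theta\circ\psi)(z)\, u(z)}{\psi(z)}\,\phi(z)
\]
is analytic (equivalently, inner), because the minimality of $\eta$ corresponds to minimality of $\phi$.

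The final step is a case analysis determined by the vanishing of $\theta(0)$ and $\psi(0)$. If $\theta(0)=0$, then $\psi$ divides $\theta\circ\psi$, so the pole from $1/\psi$ is cancelled and $\phi = 1$ works, giving $\eta = z(\theta\circ\psi)u/\psi$. If $\theta(0)\neq 0$ but $\psi(0)=0$, then $z$ divides $\psi$ and the minimal choice is $\phi = \psi/z$, giving $\eta = (\theta\circ\psi)u$. If both $\theta(0)\neq 0$ and $\psi(0)\neq 0$, we need $\phi$ to cancel the $1/\psi$ factor without introducing extra vanishing at $0$, so $\phi = \psi$ and $\eta = z(\theta\circ\psi)u$. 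In each case one should verify that no smaller inner divisor suffices by checking the order of vanishing at $0$ (the only place obstruction can occur), which is straightforward. The boundedness and well-definedness of $uC_\psi: K_\theta \to K_\eta$ then follow automatically from the established containment $uC_\psi(K_\theta) \subseteq K_\eta \subseteq H^2$ and the closed graph theorem.

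The main potential obstacle is verifying genuine minimality of $\phi$ in each case — one must rule out that some proper inner divisor of $\phi$ still makes $\eta$ inner. This is handled by examining the local behaviour at $z=0$ (which is the only point where cancellation of the factor $1/\psi$ can fail) and using that $\theta\circ\psi$ and $u$ are inner factors whose orders of vanishing at $0$ are completely determined by $\theta(0),\, \psi(0),\, u(0)$ (where under the hypotheses of the theorem $u(0)$ need not be zero but even if it were, the factor $u$ is present outright and does not need to be produced by $\phi$). Once minimality is justified, the theorem statement is immediate.
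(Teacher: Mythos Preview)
Your proposal is correct and follows essentially the same route as the paper's proof: invoke Corollary~\ref{cor upsimodel} (with $u_o=1$ since $u$ is inner) to identify the minimal Toeplitz kernel, apply Proposition~\ref{prop FG} to reduce the model-space inclusion to a Smirnov-class condition, observe unimodularity forces membership in $H^\infty$, and then run the same three-case analysis on $\theta(0)$ and $\psi(0)$ to pin down the minimal inner $\phi$. Your added remarks on verifying minimality at $z=0$ and on boundedness via the closed graph theorem are reasonable elaborations but not departures from the paper's argument.
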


\begin{proof} We find $\eta$ such that $K_\eta$ is the minimal model space  containing $uC_\psi(K_\theta)$, Corollary \ref{cor upsimodel} implies  that
 $\Ker T_{\psi(\overline{\theta}\circ\psi)  \overline{u} /z}  \subseteq K_\eta=\Ker T_{\overline{\eta}}.$    Proposition \ref{prop FG} yields that  $$\Ker T_{\psi(\overline{\theta}\circ\psi) \overline{u} /z} \subseteq
\Ker T_{\overline{\eta}}\;\;\mbox{if and only if}\; \;\frac{\overline{\eta}z}{\psi(\overline{\theta} \circ \psi)
 \overline{u} }\in \overline{\mathcal{N}_+}.$$
That is, $$\frac{\eta \psi }{z ( \theta  \circ \psi)u}\in \mathcal{N}_+,$$  which is unimodular. Hence  the definition of the Smirnov class implies $$\frac{\eta\psi }{z ( \theta  \circ \psi)u}\in H^\infty\subseteq H^2.$$ So we need to find the minimal inner function $\phi$ such that
$$\eta(z)=\frac{z ( \theta  \circ \psi)u}{ \psi } \phi \in H^2.$$

For the case $\theta(0)=0$, it follows that $\psi$ divides $\theta\circ\psi$,  and so take $\phi(z)=1$, which gives $\eta(z)=z ( \theta  \circ \psi)u/ \psi.$

For the case $\theta(0)\neq 0$, we divide into two subcases: if $\psi(0)=0,$ we take $\phi(z)=\psi(z)/z$, and it follows that $\eta(z)= ( \theta  \circ \psi)u;$  if   $\psi(0)\neq 0,$ we take $\phi(z)=\psi(z)$ and then $\eta(z)=z (\theta  \circ \psi)u,$ finishing the proof.  \end{proof}
\begin{rem} Letting $u=1$ in Theorem \ref{thm uCpsi}, Theorem \ref{containing-model} follows.\end{rem}

\section{Maximal vectors for  Toeplitz kernels with compositional inner symbols}

In this final section, we generalise the above results in the sense of equivalence (see, e.g. \cite{CaP3,CMP4}). To be specific, we formulate the maximal vectors for several Toeplitz kernels (including model spaces)  with symbols
expressed in terms of composition operators and inner functions.

\begin{defn}\cite[Definition 2.23]{CaP3}\label{functionG} If  $G_1, G_2\in L^\infty$, we say that $G_1\sim G_2$ if and only if there are functions $H_+\in \mathcal{G}H^\infty$ and  $ H_- \in \mathcal{G}\overline{H^\infty},$ such that \begin{align}G_1=H_-G_2H_+.\label{G12}\end{align}\end{defn}
\begin{defn}\cite[Definition 2.24]{CaP3}\label{kernelG} If $G_1, G_2\in L^\infty\setminus\{0\}$, such that $\Ker T_{G_1}$ and $\Ker T_{G_2}$ are nontrivial, we say that
 $\Ker T_{G_1}\sim \Ker T_{G_2}$ if and only if $$ \Ker T_{G_1}=H_+\Ker T_{G_2}\;\;\mbox{with}\;H_+\in \mathcal{G} H^\infty.$$  \end{defn}

Definitions \ref{functionG} and \ref{kernelG} imply that if $G_1\sim G_2$ via \eqref{G12} then
$\Ker T_{G_1}=H_+^{-1}\Ker T_{G_2}$, that is, $\Ker T_{G_1}\sim \Ker T_{G_2}.$ In the sequel, we want to express the maximal vector for a Toeplitz kernel $\Ker T_{h_-(g\circ \psi)h_+}=h_+^{-1}\Ker T_{g\circ \psi}$ in terms of a maximal vector of $\Ker T_g,$ where  $\psi$  is inner, $h_+\in \mathcal{G}H^\infty\;\mbox{and}\;\;h_-\in \mathcal{G}\overline{H^\infty}.$ The first two propositions concern the maximal vectors for Toeplitz kernels   $\Ker T_{\overline{z}h_- (g\circ \psi) h_+}=h_+^{-1}\Ker T_{\overline{z} (g\circ \psi)}$ and   $\Ker T_{\overline{z}\psi h_- (g\circ \psi) h_+}=h_+^{-1}\Ker T_{\overline{z}\psi (g\circ \psi)}$.

\begin{prop} \label{prop h+-}If $k$ is a maximal vector for $\Ker T_g,$ where $g\in L^\infty,$ then  $h_+^{-1} (k\circ\psi)\psi$ is a maximal vector for $\Ker T_{\overline{z}h_- (g\circ \psi) h_+}=h_+^{-1}\Ker T_{\overline{z} (g\circ \psi)}$ for every inner function $\psi$ and every $h_+\in \mathcal{G}H^\infty,\;h_-\in \mathcal{G}\overline{H^\infty}.$ That is,   \begin{align} {\rm K_{\min}}(h_+^{-1} (k\circ \psi)\psi)=\Ker T_{\overline{z} h_-(g\circ \psi) h_+}=h_+^{-1}\Ker T_{\overline{z} (g\circ \psi)}.\label{Kmin1} \end{align} \end{prop}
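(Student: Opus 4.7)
The plan is to apply Theorem~\ref{maximal1} directly: by that theorem, $k$ being a maximal vector for $\Ker T_g$ means $k = g^{-1}\overline{z}\,\overline{p}$ for some outer $p\in H^2$, and to prove the claim it suffices to verify that $h_+^{-1}(k\circ\psi)\psi$ has the analogous form for the symbol $\overline{z}h_-(g\circ\psi)h_+$. First I would substitute the expression for $k$ and exploit that $\psi$ is inner, so $\psi\overline{\psi}=1$ a.e.\ on $\mathbb{T}$: on the unit circle one computes
\[
h_+^{-1}(k\circ\psi)\psi \;=\; h_+^{-1}(g\circ\psi)^{-1}\overline{\psi}\,\psi\,\overline{p\circ\psi} \;=\; h_+^{-1}(g\circ\psi)^{-1}\overline{p\circ\psi}.
\]
At the same time, for any outer $q\in H^2$,
\[
(\overline{z}h_-(g\circ\psi)h_+)^{-1}\overline{z}\,\overline{q} \;=\; h_-^{-1}(g\circ\psi)^{-1}h_+^{-1}\,\overline{q},
\]
since $\overline{z}\cdot(\overline{z})^{-1}=1$ on $\mathbb{T}$. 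Matching these two expressions forces $\overline{q}=h_-\overline{p\circ\psi}$, i.e.\ $q = \overline{h_-}(p\circ\psi)$.

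Next I would verify the outerness and $H^2$-membership needed to invoke Theorem~\ref{maximal1} in reverse. Since $h_-\in\mathcal{G}\overline{H^\infty}$, its conjugate $\overline{h_-}\in\mathcal{G}H^\infty$ is invertible in $H^\infty$, hence outer; by \cite[Theorem 4.27]{CHL}, $p\circ\psi$ is outer because $p$ is outer and $\psi$ is inner; the product $q = \overline{h_-}(p\circ\psi)$ is therefore outer in $H^2$. Moreover $k\circ\psi\in H^2$ while $\psi, h_+^{-1}\in H^\infty$, so $h_+^{-1}(k\circ\psi)\psi\in H^2$. Applying Theorem~\ref{maximal1} in the forward direction then yields that $h_+^{-1}(k\circ\psi)\psi$ is a maximal vector for $\Ker T_{\overline{z}h_-(g\circ\psi)h_+}$, which gives the first equality in \eqref{Kmin1}.

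For the remaining equality $\Ker T_{\overline{z}h_-(g\circ\psi)h_+}=h_+^{-1}\Ker T_{\overline{z}(g\circ\psi)}$, I would factor $\overline{z}h_-(g\circ\psi)h_+ = h_-\cdot\overline{z}(g\circ\psi)\cdot h_+$ and invoke Definitions~\ref{functionG} and \ref{kernelG} (the observation recorded just after Definition~\ref{kernelG} is precisely that equivalent symbols yield equivalent kernels, with the invertible $H^\infty$ multiplier here being $h_+^{-1}$). Overall the proof is essentially an algebraic verification; the one point requiring genuine care is the cancellation $\psi\overline{\psi}=1$ on $\mathbb{T}$, which both explains the appearance of the extra factor $\psi$ in the maximal vector $h_+^{-1}(k\circ\psi)\psi$ (it absorbs the $\overline{\psi}$ produced when evaluating $k = g^{-1}\overline{z}\,\overline{p}$ at $\psi$) and is the only subtlety in an otherwise direct application of Theorem~\ref{maximal1}.
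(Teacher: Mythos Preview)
Your proposal is correct and follows essentially the same approach as the paper: both start from Theorem~\ref{maximal1} to write $gk=\overline{z}\,\overline{p}$ with $p$ outer, use the cancellation $\psi\overline{\psi}=1$ on $\mathbb{T}$ to obtain $(\overline{z}h_-(g\circ\psi)h_+)(h_+^{-1}(k\circ\psi)\psi)=\overline{z}h_-(\overline{p}\circ\psi)$, and then apply Theorem~\ref{maximal1} again using that $\overline{h_-}(p\circ\psi)$ is outer. Your write-up is slightly more explicit about verifying $q=\overline{h_-}(p\circ\psi)$ is outer and about the second equality via Definitions~\ref{functionG}--\ref{kernelG}, but the argument is the same.
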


\begin{proof} From Theorem \ref{maximal1}, if ${\rm K_{\min}}(k)=\Ker T_g,$ then $gk=\overline{zp}$ for an outer function $p\in H^2$. Therefore we further have $$\psi (g\circ \psi)(k\circ \psi)= \overline{p}\circ \psi.$$It holds that $h_+^{-1}( k\circ \psi)\psi \in H^2$, which satisfies
 \begin{eqnarray}(\overline{z} h_-(g\circ \psi)h_+)(h_+^{-1} (k\circ \psi)\psi )=\overline{z}h_-(\overline{p}\circ \psi). \label{gpsi1}\end{eqnarray}
Since $p\circ \psi$ is outer and $h_-\in \mathcal{G}\overline{H^\infty}$, the above equation, together with Theorem \ref{maximal1}, implies  the required result.
\end{proof}

And then \eqref{Kmin1} implies $ {\rm K_{\min}}(  (k\circ \psi)\psi)=\Ker T_{\overline{z} (g\circ \psi)}$ due to $h_+\in \ma{G}H^\infty.$ This reduces to a corollary on model spaces.

\begin{cor}\label{cor Model1}Let $\theta$ and $\psi$ be two inner functions such that $K_\theta\neq \{0\}$; then $C_\psi(S^*\theta)\psi$ is a maximal vector for $K_{z(\theta\circ \psi)}.$ That is,  $$K_{\min}(C_\psi(S^*\theta)\psi)=K_{z(\theta\circ \psi)}.$$ \end{cor}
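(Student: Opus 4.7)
The plan is to obtain this as a direct application of Proposition \ref{prop h+-} with carefully chosen parameters. Specifically, I would take $g = \overline{\theta} \in L^\infty$ so that $\Ker T_g = \Ker T_{\overline{\theta}} = K_\theta$, and $k = S^*\theta$, which is indeed a maximal vector for $K_\theta$ by Remark \ref{rem S*theta}. For the trivialising choice of invertible factors I set $h_+ = 1 \in \mathcal{G}H^\infty$ and $h_- = 1 \in \mathcal{G}\overline{H^\infty}$.

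With these substitutions, the conclusion \eqref{Kmin1} of Proposition \ref{prop h+-} reads
\[
{\rm K_{\min}}\bigl((S^*\theta \circ \psi)\,\psi\bigr) = \Ker T_{\overline{z}(\overline{\theta}\circ \psi)}.
\]
The left-hand side is precisely ${\rm K_{\min}}(C_\psi(S^*\theta)\psi)$ by the definition of the composition operator. For the right-hand side, I would note that $\theta\circ \psi$ is an inner function (being a composition of inner functions), hence so is $z(\theta\circ \psi)$, and therefore
\[
\overline{z(\theta\circ\psi)} = \overline{z}\,(\overline{\theta}\circ \psi),
\]
giving $\Ker T_{\overline{z}(\overline{\theta}\circ \psi)} = \Ker T_{\overline{z(\theta\circ \psi)}} = K_{z(\theta\circ \psi)}$.

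Combining these two identifications yields ${\rm K_{\min}}(C_\psi(S^*\theta)\psi) = K_{z(\theta\circ \psi)}$, which is exactly the claimed equality, and the fact that this minimal kernel coincides with $K_{z(\theta\circ\psi)}$ says precisely that $C_\psi(S^*\theta)\psi$ is a maximal vector for that model space. There is no real obstacle here: the argument is essentially a verification that the hypotheses of Proposition \ref{prop h+-} are met and that the resulting symbol $\overline{z}(\overline{\theta}\circ\psi)$ corresponds to the model space $K_{z(\theta\circ\psi)}$. The only small point to check is that $S^*\theta$ qualifies as a maximal vector, which is exactly the content of Remark \ref{rem S*theta}.
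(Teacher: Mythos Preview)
Your proposal is correct and follows essentially the same approach as the paper: the corollary is obtained from Proposition~\ref{prop h+-} by specializing to $g=\overline{\theta}$, $k=S^*\theta$ (a maximal vector by Remark~\ref{rem S*theta}), and $h_+=h_-=1$, then identifying $\Ker T_{\overline{z}(\overline{\theta}\circ\psi)}$ with $K_{z(\theta\circ\psi)}$. The paper only hints at this derivation in the sentence preceding the corollary, whereas you have spelled out the details explicitly.
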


 Replacing $h_+^{-1}( k\circ \psi)\psi \in H^2$ by $h_+^{-1}( k\circ \psi)\in H^2$ in \eqref{gpsi1}, we deduce the following proposition.
\begin{prop} \label{prop h+-1}If $k$ is a maximal vector for $\Ker T_g,$ where $g\in L^\infty,$ then  $h_+^{-1} (k\circ\psi) $ is a maximal vector for $\Ker T_{\overline{z}\psi h_- (g\circ \psi) h_+}=h_+^{-1}\Ker T_{\overline{z}\psi  (g\circ \psi) }$ for every inner function $\psi$ and every $h_+\in \mathcal{G}H^\infty,\;h_-\in \mathcal{G}\overline{H^\infty}.$ That is,   $$ {\rm K_{\min}}(h_+^{-1} (k\circ \psi) )=\Ker T_{\overline{z}\psi h_-(g\circ \psi) h_+}=h_+^{-1}\Ker T_{\overline{z}\psi  (g\circ \psi) }.$$\end{prop}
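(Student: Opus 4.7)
The plan is to follow the template of Proposition \ref{prop h+-} almost verbatim, shifting one factor of $\psi$ out of the candidate maximal vector and into the Toeplitz symbol. Specifically, I would start by invoking Theorem \ref{maximal1} on $k$: since $k$ is maximal for $\Ker T_g$, we have $gk=\overline{z}\,\overline{p}$ for some outer $p\in H^2$. Composing with the inner function $\psi$ and using that $\overline{z}\circ\psi=\overline{\psi}$ on $\mathbb{T}$, this gives
\begin{equation*}
(g\circ\psi)(k\circ\psi)=\overline{\psi}\,(\overline{p}\circ\psi),\qquad\text{hence}\qquad \psi\,(g\circ\psi)(k\circ\psi)=\overline{p}\circ\psi.
\end{equation*}

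Next I would test the candidate vector $h_+^{-1}(k\circ\psi)$ against the symbol $\overline{z}\psi h_-(g\circ\psi)h_+$. Since $h_+\in\mathcal{G}H^\infty$ gives $h_+^{-1}\in H^\infty$ and $k\circ\psi\in H^2$ (composition with an inner function preserves $H^2$), the vector lies in $H^2$. A direct computation, canceling the $h_+^{-1}$ with $h_+$ and using the displayed identity above, yields
\begin{equation*}
\bigl(\overline{z}\psi h_-(g\circ\psi)h_+\bigr)\bigl(h_+^{-1}(k\circ\psi)\bigr)=\overline{z}\,h_-\,(\overline{p}\circ\psi).
\end{equation*}
By \cite[Theorem 4.27]{CHL} $p\circ\psi$ is outer, and writing $h_-=\overline{q}$ with $q\in\mathcal{G}H^\infty$, the product $q\,(p\circ\psi)$ is outer in $H^2$. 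Therefore the right-hand side has exactly the form $\overline{z}\,\overline{\,q(p\circ\psi)\,}$ required by Theorem \ref{maximal1}, which certifies $h_+^{-1}(k\circ\psi)$ as a maximal vector.

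Finally, the kernel equality $\Ker T_{\overline{z}\psi h_-(g\circ\psi)h_+}=h_+^{-1}\Ker T_{\overline{z}\psi(g\circ\psi)}$ follows at once: multiplication by $h_-\in\mathcal{G}\overline{H^\infty}$ leaves the Toeplitz kernel unchanged by Theorem \ref{equalkernel}, while multiplication of the symbol by $h_+\in\mathcal{G}H^\infty$ shifts the kernel by $h_+^{-1}$. I do not anticipate a genuine obstacle: the only subtle point is verifying that $q(p\circ\psi)$ is outer, which reduces to the standard facts that composition with an inner function preserves outerness and that multiplication by an invertible $H^\infty$ function preserves outerness. Everything else is an algebraic rearrangement parallel to the proof of Proposition \ref{prop h+-}, with the single factor of $\psi$ relocated from the test vector into the symbol.
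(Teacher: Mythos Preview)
Your proposal is correct and follows exactly the paper's approach: the paper's entire proof consists of the single remark that one replaces $h_+^{-1}(k\circ\psi)\psi$ by $h_+^{-1}(k\circ\psi)$ in the key identity \eqref{gpsi1} from the proof of Proposition~\ref{prop h+-}, which is precisely the ``shift one factor of $\psi$ into the symbol'' maneuver you describe. Your added justification that $h_-(\overline{p}\circ\psi)=\overline{q(p\circ\psi)}$ with $q(p\circ\psi)$ outer is a welcome elaboration of what the paper leaves implicit.
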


And then it holds that $ {\rm K_{\min}}(k\circ \psi)= \Ker T_{\overline{z}\psi  (g\circ \psi) }$ due to $h_+\in \ma{G}H^\infty.$ This indicates Corollary \ref{cor models} by letting $g:=\overline{\theta}$ and $k:=S^*\theta$ with an inner $\theta$.

For the case $\psi(0)=0,$  we replace $h_+^{-1}( k\circ \psi)\psi \in H^2$ by $h_+^{-1}( k\circ \psi)\psi/z\in H^2$ in \eqref{gpsi1}, we deduce the proposition below.
\begin{prop} \label{hg}If $k$ is a maximal vector for $\Ker T_g,$ where $g\in L^\infty,$ then  $h_+^{-1} (k\circ\psi)\psi/z$ is a maximal vector for $\Ker T_{h_- (g\circ \psi) h_+}=h_+^{-1}\Ker T_{g\circ \psi}$ for every inner function $\psi$ with $\psi(0)=0$ and every $h_+\in \mathcal{G}H^\infty,\;h_-\in \mathcal{G}\overline{H^\infty}.$ That is,
 $$ {\rm K_{\min}}(h_+^{-1} (k\circ \psi)\psi/z)=\Ker T_{h_-(g\circ \psi) h_+}=h_+^{-1}\Ker T_{g\circ \psi}.$$ \end{prop}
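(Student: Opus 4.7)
The plan is to mimic the pattern of Propositions \ref{prop h+-} and \ref{prop h+-1}, with the twist that $\psi(0)=0$ lets us cancel one factor of $z$. First I would apply Theorem \ref{maximal1} to the maximal vector $k$ of $\Ker T_g$ to write $gk=\overline{z}\,\overline{p}$ for some outer $p\in H^2$. Composing with the inner function $\psi$, we get $(g\circ\psi)(k\circ\psi)=\overline{\psi}(\overline{p}\circ\psi)$, which I prefer to rewrite as $\psi(g\circ\psi)(k\circ\psi)=\overline{p}\circ\psi$ since $\psi$ is unimodular on $\mathbb{T}$.

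Next I would verify that the candidate $h_+^{-1}(k\circ\psi)\psi/z$ actually lies in $H^2$. Because $\psi(0)=0$, the function $\psi/z$ is inner, so $(k\circ\psi)\psi/z \in H^2$; multiplication by $h_+^{-1}\in\mathcal{G}H^\infty$ keeps us in $H^2$. Then I would compute
\begin{equation*}
\bigl(h_-(g\circ\psi)h_+\bigr)\cdot\bigl(h_+^{-1}(k\circ\psi)\psi/z\bigr)
= \frac{h_-(g\circ\psi)(k\circ\psi)\psi}{z}
= \frac{h_-(\overline{p}\circ\psi)}{z}
= \overline{z}\,h_-\,(\overline{p}\circ\psi),
\end{equation*}
using $1/z=\overline{z}$ on $\mathbb{T}$. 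Since $h_-\in\mathcal{G}\overline{H^\infty}$ means $\overline{h_-}\in\mathcal{G}H^\infty$ is outer (in fact invertible), and $p\circ\psi$ is outer by \cite[Theorem 4.27]{CHL}, the function $\overline{h_-}(p\circ\psi)$ is outer in $H^2$. Hence the product above has the form $\overline{z}\,\overline{q}$ with $q$ outer, and Theorem \ref{maximal1} identifies $h_+^{-1}(k\circ\psi)\psi/z$ as a maximal vector for $\Ker T_{h_-(g\circ\psi)h_+}$.

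Finally, for the equality $\Ker T_{h_-(g\circ\psi)h_+}=h_+^{-1}\Ker T_{g\circ\psi}$, I would invoke the equivalence $h_-(g\circ\psi)h_+\sim g\circ\psi$ from Definition \ref{functionG}, which via Definition \ref{kernelG} (or directly via Theorem \ref{equalkernel}, noting $h_-\in\mathcal{G}\overline{H^\infty}$ contributes an invertible conjugate-analytic factor that does not change the kernel) yields the displayed identity. Combining this with the minimal kernel description gives $K_{\min}(h_+^{-1}(k\circ\psi)\psi/z)=\Ker T_{h_-(g\circ\psi)h_+}$.

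The only subtle point, and the place to be careful, is ensuring that $(k\circ\psi)\psi/z$ belongs to $H^2$: this is precisely where the hypothesis $\psi(0)=0$ is used, and it is the feature that distinguishes this proposition from Propositions \ref{prop h+-} and \ref{prop h+-1}. Everything else is a routine application of Theorems \ref{maximal1} and \ref{equalkernel}, together with the fact that composition with an inner function preserves outerness.
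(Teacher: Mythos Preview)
Your proposal is correct and follows essentially the same route as the paper, which proves Proposition \ref{hg} simply by noting that when $\psi(0)=0$ one may replace $h_+^{-1}(k\circ\psi)\psi$ by $h_+^{-1}(k\circ\psi)\psi/z\in H^2$ in \eqref{gpsi1} and repeat the argument of Proposition \ref{prop h+-}. Your version just spells out explicitly the steps the paper leaves implicit, including the verification that $\overline{h_-}(p\circ\psi)$ is outer and the membership $(k\circ\psi)\psi/z\in H^2$.
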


The above formula ensures $ {\rm K_{\min}}( (k\circ \psi)\psi/z)= \Ker T_{g\circ \psi}$ when $\psi(0)=0$. Taking  $g:=\overline{\theta}$ and $k:=S^* \theta$ with an inner $\theta$,  a corollary follows on  model spaces.

\begin{cor}\label{cor 0} Let $\eta$  and $\psi$ be  inner functions with $\psi(0)=0$; then  $C_\psi(S^* \eta)\psi/z$ is a maximal vector for $K_{\eta\circ\psi}$. That is,  $$ {\rm K_{\min}}(C_\psi(S^* \eta)\psi/z)=K_{\eta\circ\psi}.$$\end{cor}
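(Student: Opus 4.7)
The plan is to view this corollary as the specialization of Proposition \ref{hg} already carried out in the paragraph immediately preceding the statement, and to verify that each specialization is legitimate. The machinery of maximal vectors, the minimal Toeplitz kernel, and the substitution of inner symbols has already been set up; what remains is to assemble these pieces so that the general Proposition \ref{hg} collapses to the advertised statement about a model space.

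First, I would invoke Remark \ref{rem S*theta}, which tells us that $S^*\eta$ is a maximal vector for the model space $K_\eta = \Ker T_{\overline{\eta}}$. This is precisely the setting needed to feed $g := \overline{\eta}$ and $k := S^*\eta$ into Proposition \ref{hg}. Next, I would take the trivial factors $h_+ = 1 \in \mathcal{G}H^\infty$ and $h_- = 1 \in \mathcal{G}\overline{H^\infty}$; this is valid since constants lie in both invertible algebras. With these choices, and under the hypothesis $\psi(0)=0$ (so that $\psi/z$ is a bounded analytic function, indeed inner, and hence $(k\circ\psi)\psi/z \in H^2$), Proposition \ref{hg} yields
\[
{\rm K_{\min}}\!\left( (S^*\eta \circ \psi)\,\psi/z \right) = \Ker T_{\overline{\eta}\circ\psi}.
\]

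The only step requiring a brief justification is identifying the right-hand side with $K_{\eta\circ\psi}$. Since $\psi$ is inner it maps $\mathbb{T}$ to $\mathbb{T}$ a.e., so on $\mathbb{T}$ the boundary values satisfy $\overline{\eta}\circ \psi = \overline{\eta\circ\psi}$; combined with the fact that $\eta\circ\psi$ is inner (composition of inner functions is inner, by \cite[Proposition 6.5]{GMR}, already used earlier in the paper), we obtain $\Ker T_{\overline{\eta}\circ\psi} = \Ker T_{\overline{\eta\circ\psi}} = K_{\eta\circ\psi}$. Writing $C_\psi(S^*\eta) = (S^*\eta)\circ\psi$ then gives the stated formula ${\rm K_{\min}}(C_\psi(S^*\eta)\psi/z) = K_{\eta\circ\psi}$, and in particular identifies $C_\psi(S^*\eta)\psi/z$ as a maximal vector for $K_{\eta\circ\psi}$.

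There is no substantive obstacle here beyond the identification on the circle; the corollary is essentially a dictionary translation of Proposition \ref{hg} into model-space language. The only thing one must be careful with is the hypothesis $\psi(0)=0$, which is exactly what ensures $\psi/z$ is an $H^\infty$ inner factor and hence both that $(S^*\eta\circ\psi)\psi/z$ belongs to $H^2$ and that Proposition \ref{hg} applies with $h_+=h_-=1$.
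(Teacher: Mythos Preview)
Your proposal is correct and follows exactly the route the paper intends: specialize Proposition~\ref{hg} with $h_+=h_-=1$, $g=\overline{\eta}$, and $k=S^*\eta$ (maximal for $K_\eta$ by Remark~\ref{rem S*theta}), then identify $\Ker T_{\overline{\eta}\circ\psi}=K_{\eta\circ\psi}$. The paper gives no separate proof for this corollary beyond the one-line reduction you have spelled out in full; your added justification that $\overline{\eta}\circ\psi=\overline{\eta\circ\psi}$ on $\mathbb{T}$ is a harmless clarification.
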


\begin{rem}
Additionally, Corollary \ref{cor 0} can be used to give another family of  maximal vectors for the model space $K_\theta,$ which is an explicit special case of Theorem \ref{maximal1}.

 Let $\theta$   be an inner function  with $\theta(0)=0,$  $h_+=(1+\overline{a}\beta)^{-1}\in \mathcal{G}H^\infty$ and $h_-=1+a\overline{\beta}\in \mathcal{G}\overline{H^\infty}$ with $\beta=(\theta-a)/(1-\overline{a}\theta)$ and $|a|<1$; then
    \begin{eqnarray}\overline{h_-^{-1}}\frac{1}{1-\overline{a}\theta } \frac{\theta}{z}=\frac{h_+}{1-\overline{a}\theta  } \frac{\theta}{z}\label{matheta}\end{eqnarray} is a maximal vector for $K_{\theta}$
    for almost all such $a.$ \end{rem}
\begin{proof}
Recall the automorphism on $\mathbb{D}$ is $\varphi_a(z)=(a-z)/(1-\overline{a}z)$ with $|a|<1.$ Then each inner function $\theta$ can be decomposed into $$\theta=h_-\beta h_+$$ with $\beta$ inner (see, e.g. \cite{Nik}), where
$$\beta=\frac{\theta-a}{1-\overline{a}\theta}=-\varphi_a\circ \theta.$$
Then we deduce  $$\overline{\theta}=\overline{h_+}(-\overline{\varphi_a}\circ \theta)\overline{h_-}$$ with $\overline{h_+}\in \mathcal{G}\overline{H^\infty}$ and $\overline{h_-}\in \mathcal{G} H^\infty.$ Definition \ref{functionG} implies $\overline{\theta}\sim (-\overline{\varphi_a}\circ \theta)$ and $K_{\theta}=\overline{h_-}^{-1}K_{-\overline{\varphi_a}\circ \theta}.$ So letting $\eta:=-\varphi_a$ and $\psi:=\theta$ in  Corollary \ref{cor 0} and using $S^*\eta=S^*(-\varphi_a)=(1-|a|^2)/(1-\bar{a}z)$, it yields that $$\overline{h_-}^{-1} ((S^*\eta)\circ \psi)\psi/z=\overline{h_-}^{-1}\frac{1-|a|^2}{1-\bar{a}\theta}
\frac{\theta}{z}.$$ Then the function in \eqref{matheta} is the desired maximal vector for $K_\theta$ with $\theta(0)=0$.
\end{proof}

Using the $S^*$-invariance of model spaces, we obtain an extension of Corollary \ref{cor Model1}.  An inner function $\theta$ is called the least common multiple (LCM) of the family  of inner functions $\{\theta_i:\;i\in I\}$, if each $\theta_i$ divides $\theta$, and $\theta$ divides every other inner function that is divisible by each $\theta_i.$

\begin{thm}\label{model case}  Let $\psi$  and $\theta_i$ be inner functions for $i=1,2,\cdots,n$. Then there exists a minimal kernel $K$ containing $\{C_\psi(S^*\theta_i)\psi:\;i=1,2,\cdots,n\}$ and for $\theta={\rm LCM} (\theta_1, \; \theta_2,\;\cdots,\;\theta_n),$ we have
$$K=K_{z(\theta\circ \psi)}={\rm clos}_{H^2}(K_{z(\theta_1\circ \psi)}+\cdots+K_{z(\theta_n\circ \psi)})=K_{z(\theta_i\circ \psi)}\oplus z(\theta_i\circ \psi)K_{(\theta\circ \psi)(\overline{\theta_i}\circ \psi)}.$$ \end{thm}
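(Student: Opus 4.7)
The strategy is to combine Corollary~\ref{cor Model1}, which identifies $K_{\min}(C_\psi(S^*\theta_i)\psi) = K_{z(\theta_i\circ\psi)}$, with two standard facts about model spaces: (a) for inner functions $\alpha_i$, the closure $\overline{\sum_i K_{\alpha_i}}$ equals $K_\alpha$ with $\alpha = {\rm LCM}(\alpha_i)$ (this follows from $K_\alpha^\perp = \alpha H^2$ together with Beurling's theorem applied to $\bigcap_i \alpha_i H^2 = {\rm LCM}(\alpha_i)\, H^2$); and (b) $K_{\alpha\beta} = K_\alpha \oplus \alpha K_\beta$ for inner $\alpha, \beta$.

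First, since $\theta_i$ divides $\theta$, write $\theta = \theta_i\phi_i$ with $\phi_i$ inner; then $\theta\circ\psi = (\theta_i\circ\psi)(\phi_i\circ\psi)$, so $z(\theta_i\circ\psi)$ divides $z(\theta\circ\psi)$ and $K_{z(\theta_i\circ\psi)} \subseteq K_{z(\theta\circ\psi)}$. Hence Corollary~\ref{cor Model1} puts each $C_\psi(S^*\theta_i)\psi$ into $K_{z(\theta\circ\psi)}$, exhibiting the latter as a Toeplitz kernel containing the given family. Conversely, any Toeplitz kernel containing the family must contain each $K_{\min}(C_\psi(S^*\theta_i)\psi) = K_{z(\theta_i\circ\psi)}$, and being closed must contain $\overline{\sum_i K_{z(\theta_i\circ\psi)}} = K_{{\rm LCM}(z(\theta_i\circ\psi))}$ by fact (a).

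This reduces the minimality claim to the identity ${\rm LCM}(\theta_i\circ\psi) = \theta\circ\psi$ (the leading factor $z$ factors out). One direction is immediate from the displayed divisibility. For the converse, I would split each inner function into Blaschke and singular parts. For the Blaschke part, a multiplicity check at any $a\in\mathbb{D}$ with $\psi(a)=b\in\mathbb{D}$ suffices: the multiplicity of $\theta\circ\psi$ at $a$ equals the multiplicity of $\theta$ at $b$ times the order of $\psi(z)-b$ at $a$, and since $\theta$ realises the pointwise maximum of the multiplicities of the $\theta_i$ at $b$, the same maximum appears in some $\theta_i\circ\psi$ at $a$. For the singular part, composition with $\psi$ induces an additive, positivity-preserving map $\mu \mapsto \mu^*$ on singular measures of $\mathbb{T}$ that commutes with taking suprema; this can be established by describing $\mu^*$ via the measure-preserving boundary map $\tilde\psi$. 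Verifying this LCM--composition identity is the main technical obstacle.

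Finally, the direct sum decomposition is an application of fact (b) with $\alpha = z(\theta_i\circ\psi)$ and $\beta = \phi_i\circ\psi$: indeed $\alpha\beta = z((\theta_i\phi_i)\circ\psi) = z(\theta\circ\psi)$, and on $\mathbb{T}$ we have $\phi_i\circ\psi = (\theta/\theta_i)\circ\psi = (\theta\circ\psi)(\overline{\theta_i}\circ\psi)$, matching the formula in the statement.
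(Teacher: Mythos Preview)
Your approach mirrors the paper's almost exactly: both invoke Corollary~\ref{cor Model1} to get $K_{\min}(C_\psi(S^*\theta_i)\psi)=K_{z(\theta_i\circ\psi)}$, then use that the minimal kernel containing a finite family whose members are maximal for model spaces $K_{\alpha_i}$ is $K_{\mathrm{LCM}(\alpha_i)}$ (the paper cites \cite[Theorem~4.3]{CaP2}; you call it a standard fact), and both reduce everything to the identity $\mathrm{LCM}\bigl(z(\theta_i\circ\psi)\bigr)=z(\theta\circ\psi)$. The paper simply asserts this last step without proof; you rightly flag it as the real content and sketch an argument, and your treatment of the orthogonal decomposition via $K_{\alpha\beta}=K_\alpha\oplus\alpha K_\beta$ is fine.

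Two cautions about your sketch of the LCM identity. First, the Blaschke/singular split does not commute with composition by $\psi$: a Blaschke factor $b_a$ composed with $\psi$ can acquire a nontrivial singular part (the Frostman exceptional set), so the two pieces cannot be handled completely independently. Second, additivity and positivity of $\mu\mapsto\mu^*$ by themselves do \emph{not} force commutation with suprema (a positive linear map of measures need not be a lattice homomorphism). What one actually needs is that $\mu_1\perp\mu_2$ implies $\mu_1^{*}\perp\mu_2^{*}$; this follows because for inner $\psi$ the Clark measures $\sigma_\zeta$ are carried by the pairwise disjoint boundary level sets $\{\xi:\psi^{*}(\xi)=\zeta\}$ (a Poltoratski-type fact), so that $\mu_i^{*}=\int\sigma_\zeta\,d\mu_i(\zeta)$ is carried by $(\psi^{*})^{-1}(A_i)$ whenever $\mu_i$ is carried by $A_i$. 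From coprimality preservation one then gets the GCD---and hence LCM---identity for pairs, and induction finishes it. Your phrase ``measure-preserving boundary map $\tilde\psi$'' is pointing at the right object, but the argument needs these ingredients spelled out to be complete.
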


\begin{proof} Corollary \ref{cor Model1} implies $C_\psi(S^*\theta_i)\psi$ is a maximal vector for $K_{z(\theta_i\circ\psi)}$ and  \cite[Theorem 4.3]{CaP2} indicates $K_{\widetilde{\theta}}$ is the minimal model space  containing $\{C_\psi(S^*\theta_i)\psi:\;i=1,2,\cdots,n\}$ with $$\widetilde{\theta}={\rm LCM}(z(\theta_1\circ \psi),\;z(\theta_2\circ \psi), \;\cdots,\; z(\theta_n\circ \psi)). $$
Since $\widetilde{\theta}=z(\theta\circ\psi),$ the desired results follow.
  \end{proof}

Motivated by Theorem \ref{model case}, it is natural to inquire whether the sum of two Toeplitz kernels (or even nearly $S^*$-invariant subspaces) is nearly $S^*$-invariant. First, the well-known lemma by Coburn provides a very interesting property possessed by the class of Toeplitz operators.

\begin{lem}\cite[Lemma B4.5.6]{Nik} Let $\varphi\in L^\infty,$ then at least one of the kernels $\Ker T_\varphi$ or $\Ker T_{\varphi}^*=\Ker T_{\overline{\varphi}}$ has to be trivial.  \end{lem}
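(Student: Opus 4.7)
The plan is to argue by contradiction, under the standard reading that $\varphi\in L^\infty\setminus\{0\}$ (if $\varphi\equiv 0$, both kernels equal $H^2$ and the statement is vacuous). Suppose both $\Ker T_\varphi$ and $\Ker T_{\overline\varphi}$ contain nonzero vectors, say $f$ and $g$ respectively. The kernel conditions $P_{H^2}(\varphi f)=0$ and $P_{H^2}(\overline\varphi g)=0$ translate directly into the memberships
\[
\varphi f\in\overline{zH^2},\qquad \overline\varphi\, g\in\overline{zH^2},
\]
and conjugating the second gives $\varphi\overline g\in zH^2$.

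The key move is to examine the single $L^1(\mathbb{T})$-function $\varphi f\overline g$ under two different groupings of its factors. Writing it as $(\varphi f)\cdot\overline g$ with $\varphi f\in\overline{zH^2}$ and $\overline g\in\overline{H^2}$ places the product in $\overline{zH^1}$, since $H^2\cdot H^2\subseteq H^1$. Writing it instead as $f\cdot(\varphi\overline g)$ with $f\in H^2$ and $\varphi\overline g\in zH^2$ places the same product in $zH^1$. Since the Fourier coefficient description on $\mathbb{T}$ immediately yields $zH^1\cap\overline{zH^1}=\{0\}$, I conclude that $\varphi f\overline g=0$ almost everywhere on $\mathbb{T}$.

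To close the argument, I would invoke the F. and M. Riesz theorem: a nonzero $H^2$ function cannot vanish on a subset of $\mathbb{T}$ of positive Lebesgue measure. Hence both $f$ and $g$ (and therefore $f\overline g$) are nonzero a.e., which forces $\varphi=0$ a.e., contradicting $\varphi\neq 0$. I do not anticipate a substantive obstacle; the only delicate ingredients are the trivial intersection $zH^1\cap\overline{zH^1}=\{0\}$ and the essential non-vanishing of nonzero $H^2$ functions, both of which are classical.
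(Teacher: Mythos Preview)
Your argument is correct and is in fact the classical proof of Coburn's lemma. Note, however, that the paper does not supply its own proof of this statement at all: it is simply quoted as \cite[Lemma B4.5.6]{Nik} and used as a black box, so there is nothing in the paper to compare your approach against beyond the citation itself.
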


\begin{cor}Let $\varphi\in L^\infty,$ then $\Ker T_\varphi+\Ker T_{\varphi}^*$ is always a Toeplitz kernel, so it is nearly $S^*$-invariant. \end{cor}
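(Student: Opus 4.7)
The plan is to observe that the corollary is a direct consequence of Coburn's lemma, which was just stated. By Coburn's lemma, at least one of $\Ker T_\varphi$ and $\Ker T_\varphi^*=\Ker T_{\overline{\varphi}}$ is trivial, so the sum $\Ker T_\varphi+\Ker T_{\overline{\varphi}}$ collapses to just one of its summands.

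More concretely, first I would split into two cases. If $\Ker T_\varphi=\{0\}$, then $\Ker T_\varphi+\Ker T_{\overline{\varphi}}=\Ker T_{\overline{\varphi}}$, which is manifestly a Toeplitz kernel. Symmetrically, if $\Ker T_{\overline{\varphi}}=\{0\}$, then $\Ker T_\varphi+\Ker T_{\overline{\varphi}}=\Ker T_\varphi$, again a Toeplitz kernel. In either case the sum is a Toeplitz kernel, and since Toeplitz kernels are always nearly $S^*$-invariant (as noted in the discussion following Theorem \ref{thm Hitt}), the conclusion follows.

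There is essentially no obstacle here: the whole content is packaged into Coburn's lemma, and the only small subtlety to acknowledge is the trivial boundary case $\varphi=0$, in which case $T_\varphi=0$ and $\Ker T_\varphi+\Ker T_{\overline{\varphi}}=H^2=\Ker T_0$, still a Toeplitz kernel. So the proof should amount to at most a couple of sentences invoking the preceding lemma.
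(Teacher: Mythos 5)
Your proof is correct and is exactly the argument the paper intends: the corollary is stated immediately after Coburn's lemma with no separate proof, precisely because the sum collapses to one of the two summands (and your remark on the degenerate case $\varphi=0$, where the sum is $H^2=\Ker T_0$, is a sensible extra precaution).
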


A similar result holds  for arbitrary nearly $S^*$-invariant subspaces $M_1$ and $M_2$ in $H^2$ when $ M_1\cap M_2\neq \{0\}.$

\begin{prop} Let $M_1$ and $M_2$ be two nearly $S^*$-invariant subspaces with $ M_1\cap M_2\neq \{0\},$ then $M_1+M_2$ is nearly $S^*$-invariant.\end{prop}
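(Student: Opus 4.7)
The plan is to show that for every $f\in M_1+M_2$ with $f(0)=0$ one has $S^*f\in M_1+M_2$. Writing $f=f_1+f_2$ with $f_i\in M_i$, the obstacle is that neither $f_1(0)$ nor $f_2(0)$ need vanish individually, so we cannot directly invoke near $S^*$-invariance of the two summands. The key idea is to exploit the nontrivial intersection $M_1\cap M_2$ in order to redistribute the value at the origin between the two components without leaving the respective subspaces.

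First I would observe that $M_1\cap M_2$ is itself nearly $S^*$-invariant: if $h\in M_1\cap M_2$ with $h(0)=0$, then $S^*h$ lies in each $M_i$ and hence in the intersection. Since $M_1\cap M_2\neq\{0\}$, I can pick a nonzero $g_0\in M_1\cap M_2$, let $k$ be the order of its zero at the origin, and set $g=(S^*)^k g_0$. Iterating near $S^*$-invariance of $M_1\cap M_2$ exactly $k$ times produces $g\in M_1\cap M_2$ with $g(0)\neq 0$; this element will serve as the bridge between the two subspaces.

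Next, given $f\in M_1+M_2$ with $f(0)=0$ and a decomposition $f=f_1+f_2$, set $\alpha=f_1(0)=-f_2(0)$ and define
$$\tilde f_1=f_1-\frac{\alpha}{g(0)}\,g\in M_1,\qquad \tilde f_2=f_2+\frac{\alpha}{g(0)}\,g\in M_2.$$
Both $\tilde f_1$ and $\tilde f_2$ vanish at the origin and still satisfy $\tilde f_1+\tilde f_2=f$. Near $S^*$-invariance of each $M_i$ then yields $S^*\tilde f_i\in M_i$, and therefore
$$S^*f=S^*\tilde f_1+S^*\tilde f_2\in M_1+M_2,$$
which is exactly what we wanted.

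The only delicate point is the production of an element of $M_1\cap M_2$ that is nonzero at the origin, which is handled by the iteration in the first step; everything else is elementary linear algebra. If one wishes to strengthen the conclusion to $\overline{M_1+M_2}$ (so as to stay within the class of closed subspaces used in the definition), a routine limit argument applies: approximating $f$ by sums $f_{1,n}+f_{2,n}$, replacing each by the analogous $\tilde f_{1,n}+\tilde f_{2,n}$, and using boundedness of $S^*$ together with continuity of evaluation at $0$ on $H^2$ shows $S^*f\in\overline{M_1+M_2}$.
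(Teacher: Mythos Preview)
Your argument is correct and is essentially the same as the paper's proof: both pick a nonzero $g\in M_1\cap M_2$ with $g(0)\neq 0$ and subtract the appropriate multiple of $g$ from $f_1$ (and add it to $f_2$) so that each summand vanishes at the origin, then apply near $S^*$-invariance of $M_1$ and $M_2$ separately. The only difference is that the paper simply writes ``we may assume $g(0)\neq 0$'' without justification, whereas you spell this out by noting that $M_1\cap M_2$ is itself nearly $S^*$-invariant and iterating $S^*$ to strip off the zero at the origin; your extra remark about passing to the closure is also not in the paper.
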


\begin{proof} Since $M_1\cap M_2\neq \{0\},$  there is a nonzero function $g\in M_1\cap M_2.$ We may assume $g(0)\neq 0$ and then if $(f_1+f_2)(0)=0$, we have a $\lambda\in \mathbb{C}$ such that $(f_1-\lambda g)(0)=0$ and $(f_2+\lambda g)(0)=0.$ Then if we denote $h_1:=(f_1-\lambda g)/z$ and $h_2:=(f_2+\lambda g)/z$, it follows that $h_i\in M_i$ due to the near $S^*$-invariance of $M_i$  for $i=1, 2$, so $S^*(f_1+f_2)=h_1+h_2\in M_1+M_2,$ completing the proof.
\end{proof}

\begin{exm}
However,  the sum of two Toeplitz kernels need not be nearly $S^*$-invariant in $H^2$.  One Toeplitz kernel is obtained from Proposition \ref{prop equal} by taking $F(z)=\bar{z}$ and $u(z)=(z+2)^2$ with $u^{-1}\in H^\infty$; then it follows that $$(z+2)^2K_z=
\Ker T_{\bar{z} (\bar{z}+2)^2/(z+2)^2}=\Ker T_{\bar{z}^3 b(z)^2},$$
where $b$ is the Blaschke product $b(z)=\frac{z+1/2}{1+z/2}.$
Let $M_1=\mathbb{C}=K_z$ and $M_2=\Ker T_g$ with $g(z):=\overline{z}^3 b(z)^2$ be
the two Toeplitz kernels satisfying $M_1\cap M_2=\{0\}$. Then $M_1+M_2$ is not nearly $S^*$-invariant, since it contains $z^2+4z$ but not $z+4.$ \end{exm}
\section*{Declarations}

\textbf{Funding}\;\; This work was supported  by the National Natural Science Foundation of China (Grant No. 12471126).

\textbf{Data availability}\;\; Not applicable.

\textbf{Conflict of interest} The authors have no relevant financial or non-financial interests to disclose.

\end{document}